\numberwithin{equation}{section}
\theoremstyle{definition}
\newtheorem{theorem}{Theorem}
\newtheorem{lemma}{Lemma}
\newtheorem{remark}{Remark}
\newtheorem{example}{Example}
\newtheorem{assumption}{Assumption}
\begin{document}
	
	\title{An HDG Method for Time-dependent  Drift-Diffusion Model of Semiconductor Devices}

\author{Gang Chen%
	\thanks{School of Mathematics Sciences, University of Electronic Science and Technology of China, Chengdu, China (\mbox{cglwdm@uestc.edu.cn}).}
	\and
	Peter Monk%
	\thanks{Department of Mathematics Science, University of Delaware, Newark, DE, USA (\mbox{monk@udel.edu}).}
	\and
	Yangwen Zhang%
	\thanks{Department of Mathematics Science, University of Delaware, Newark, DE, USA (\mbox{ywzhangf@udel.edu}).}
}

\date{\today}

\maketitle

\begin{abstract}
		We propose a hybridizable discontinuous Galerkin (HDG) finite element method to approximate the solution of the  time dependent drift-diffusion problem.  This system involves a nonlinear convection diffusion equation
			for the electron concentration $u$ coupled to a linear  Poisson problem for the the electric potential $\phi$. The non-linearity in this system is the
			product of the $\nabla \phi$ with $u$.  An improper choice of a numerical scheme can reduce the convergence rate. To obtain optimal HDG error estimates for $\phi$, $u$ and their gradients, we utilize two different HDG schemes to discretize the  nonlinear convection diffusion equation and the Poisson equation. We prove optimal order error estimates for the
			semidiscrete problem.  We also present numerical experiments to support our theoretical results.
\end{abstract}

\section{Introduction}\label{intro}
{Drift-diffusion equations play an  important role in modeling the movement of charged particles particularly in semiconductor physics \cite{baltes1986integrated,MR723107,burgler1989new,datta1997electronic,MR796097,jerome2012analysis,MR1437143,MR821965}.  Besides the applications to semiconductors, these kinds of PDEs have many applications in the simulation of batteries \cite{Meng_PNP_Commun_Comput_Phys_2014,wu2002newton}, charged particles in biology \cite{Lu_PNP_JCP_2010,xu2014modeling} and  physical chemistry \cite{eisenberg2010energy,horng2012pnp,tu2013parallel,hsieh2015transport}.}

We consider the following model time dependent drift-diffusion equation posed on a Lipschitz polyhedral domain $\Omega \subset \mathbb R^d (d\ge 2)$: {we seek to determine the unknown  electron {density} $u$ and the electric potential $\phi$ that satisfy}
\begin{subequations}\label{Drift_Diffusion_Main_Equation}
	\begin{align}
	u_t- \Delta u+\nabla\cdot(u\nabla{\phi})&=0&\text{ in }\Omega\times (0,T],\label{Drift_Diffusion_Main_Equation_a}\\
	-\varepsilon\Delta {\phi}+u&=0&\text{ in }\Omega\times (0,T],\label{Drift_Diffusion_Main_Equation_b}\\
	u&=g_u&\text{ on }\partial\Omega\times(0,T],\label{Drift_Diffusion_Main_Equation_c}\\
	{\phi}&=g_{{\phi}}&\text{ on }\partial\Omega\times(0,T],\label{Drift_Diffusion_Main_Equation_d}\\
	u(\cdot,0)&=u_0&\text{ in }\Omega,\label{Drift_Diffusion_Main_Equation_e}
	\end{align}
	{where $\varepsilon$ is a constant and typically small in real applications. In our analysis, we assume $\varepsilon=O(1)$ and have not analyzed the
		$\varepsilon $ dependence of the coefficients. This  will be  considered in  future work.}
	We shall discuss the  smoothness assumptions on $g_u$, $g_{\phi}$ and $u_0$  needed for our analysis later in the paper. {Applications of the drift-diffusion model often involve more 
		complicated versions of the above model, for example including additional particle transport equations (for example, for holes) and recombination terms.  However the above system contains the principle difficulty
		from the point of view of proving convergence: the term $\nabla\cdot(u\nabla \phi)$.}
\end{subequations}

Theoretical and numerical  studies for this type of partial differential equation (PDE) have a long history.  For the theoretical analysis of the drift-diffusion system; see \cite{Dolbeault_PNP_Poincare_2000,Hebisch_Nonlinear_Analysis_1994,Gajewski_ZAMM_1985,Gajewski_JMAA_1986,Mock_SIAM_JMA_1974,Jerome_Book_Semiconductor_Springer_1996} and the  references therein. Computational studies  started in the 1960s \cite{de1968accurate,gummel1964self} and many discretization methods have been used for the drift-diffusion system in the past decades. For an extensive body of literature devoted to this subject we refer to, e.g., the finite difference method \cite{flavell2014conservative,He_PNP_FDM_AMC_2016,Liu_FDM_PNP_JCP_2014,Mirzadeh_PNP_JCP_2014}, the finite volume method \cite{Chainais_FVM_IMA_2003,Chainais_FVM_M3AS_2004,Chainais_FVM_M2AN_2003,Bessemoulin_FVM_2014_SINUM,Bessemoulin_NM_2012}, the standard finite element method (FEM) \cite{Lu_PNP_JCP_2010,Sun_PNP_FEM_JCAM_2016,Gao_FEM_PNP_JSC_2017}, and mixed FEM \cite{He_Mixed_PNP_NMPDE_2017,gao2018linearized}. 
Furthermore, there are many new models in which the drift-diffusion equation coupled with other PDEs; such as Stokes \cite{He_PNP_Stokes_JCAM_2018}, Navier-Stokes \cite{Schmuck_PNP_NS_M3AS_2009} and Darcy flow \cite{Frank_PNP_Darcy_M2AN_2017}.  However these extensions are outside the scope of this paper.

{The product of the gradient of the electric potential, $\nabla \phi$  with electron concentration $u$ 
	in (\ref{Drift_Diffusion_Main_Equation_a})} can cause a reduction in the convergence rate of the solution if the numerical schemes for the two equations are not properly devised. In \cite{Sun_PNP_FEM_JCAM_2016}, they obtained an optimal convergence rate in $H^1$ norm but a suboptimal in $L^2$ norm by the standard FEM. To overcome the convergence order reduction, a new method was proposed to discretize the system \eqref{Drift_Diffusion_Main_Equation};  mixed FEM for Poisson equation \eqref{Drift_Diffusion_Main_Equation_b} and standard FEM for \eqref{Drift_Diffusion_Main_Equation_a},. This scheme provides optimal error estimates for  $u$ and $\phi$ in both {$H^1$ or $H({\rm{}div})$ as appropriate as well as in the $L^2$ norm}. Very recently, the authors in \cite{gao2018linearized} obtained an optimal convergence rate by using  mixed FEM for both \eqref{Drift_Diffusion_Main_Equation_a} and \eqref{Drift_Diffusion_Main_Equation_b}. 

In the drift-diffusion model, typically, the magnitude of $\nabla \phi$ is huge (see \cite{Brezzi_DD_SINUM_1989}). Therefore,  it is natural to consider the discontinuous Galerkin (DG) method to discretize the system \eqref{Drift_Diffusion_Main_Equation}. In \cite{Liu_LDG_DD_SCM_2016}, a local DG (LDG) method was used to study a 1D drift-diffsuion equation, they obtained an optimal convergence rate by using an important relationship between the gradient and interface jump of the numerical solution with the independent numerical solution of the gradient in the LDG methods; see \cite[Lemma 2.4]{Wang_LDG_SINUM_2015} and \cite[Lemma 4.3]{Liu_LDG_DD_SCM_2016}. However, to the best of our knowledge, the inequality in \cite[Lemma 2.4]{Wang_LDG_SINUM_2015} is not straightforward to extend to high dimensions.

Moreover, the number of degrees of freedom for the DG or LDG methods is much larger compared to standard FEM; this is the main drawback of DG methods. Hybridizable discontinuous Galerkin (HDG) methods were originally proposed in \cite{Cockburn_Gopalakrishnan_Lazarov_Unify_SINUM_2009} to remedy this issue.  The global system of HDG methods only involve the degrees of freedom on the boundary face of the element. Therefore, HDG methods have a significantly smaller number of  degrees of freedom in the global system compared to DG methods, LDG methods or mixed FEM.  
Moreover,  HDG methods  keep the advantages of DG methods,  which are suitable for the drift term if $\nabla \phi$ is large. For more information of the HDG methods for convection diffusion problems; see, e.g., \cite{Chen_Cockburn_Convection_Diffusion_IMAJNA_2012,Chen_Cockburn_Convection_Diffusion_MathComp_2014,Fu_Qiu_Zhang_Convection_Dominated_M2AN_2015,Qiu_Shi_Convection_Diffusion_JSC_2016,Chen_Li_Qiu_Posteriori_Convection_Diffusion_IMAJNA_2016}.

There are many different HDG schemes, see for example \cite{Cockburn_M_elasticity_IMA_2018,Cockburn_M_Stokes_IMA_2017,Cockburn_M_Diffusion_MathComp_2017,Cockburn_M_3D_M2AN_2017,Cockburn_M_2D_M2AN_2017,Lehrenfeld_PhD_thesis_2010,Cockburn_Gopalakrishnan_Lazarov_Unify_SINUM_2009}. Among all of these methods, two  are most popular, {following standard terminology} we call them are HDG$_k$ and HDG(A) in the rest of the paper. The HDG$_k$ method uses polynomials of degree $k$ to approximate the solution, the flux,  and the trace on the boundary face together with a positive stabilization parameter is chosen to be $\mathcal O(1)$. The HDG(A) method uses polynomial degree $k+1$ to approximate the solution, polynomial degree $k$ to approximate the flux and uses the so called Lehrenfeld-Sch\"oberl stabilization function, see \cite[Remark 1.2.4]{Lehrenfeld_PhD_thesis_2010}. These two methods were used to study the Poisson equation in \cite{Cockburn_Gopalakrishnan_Sayas_Porjection_MathComp_2010,Li_Xie_Family_JCAM_2016,Oikawa_Poisson_JSC_2015}, the linear elasticity \cite{Cockburn_M_elasticity_IMA_2018,Qiu_Shen_Shi_elasticity_MathComp_2018},  the convection diffusion equation in \cite{Chen_Cockburn_Convection_Diffusion_MathComp_2014,Chen_Cockburn_Convection_Diffusion_IMAJNA_2012,Qiu_Shi_Convection_Diffusion_JSC_2016}, the Stokes equation in \cite{Cockburn_Gopalakrishnan_Nguyen_Peraire_Sayas_Stokes_MathComp_2011,GongHuMateosSinglerZhang1} and the Navier-Stokes equation in \cite{Cesmelioglu_NS_MathComp_2017,Qiu_Shi_NS_IMAJNA_2016}.

{The goal of this paper is to design an HDG scheme by the appropriate choice
	of HDG spaces such that the overall scheme is optimally
	convergent and to prove semi-discrete optimal convergence rates  in $d$ spatial dimensions ($d=2,3$). The result is a new HDG scheme for the drift-diffusion system with attractive convergence properties.  We shall assume 
	that a suitably regular solution of the drift-diffusion system exists. For existence theory, see for example the book 
	of Markowich~\cite{MR821965}.}

{To develop our HDG method, we write the drift-diffusion system as a first order system by introducing new variable $\bm q$ and $\bm p$ such that $\bm q+\nabla u=0$, $\bm p+\nabla{\phi}=0$. Then \eqref{Drift_Diffusion_Main_Equation}, becomes the problem of finding $(u,{\bm q},\phi,{\bm p})$ such that}
\begin{subequations}\label{Drift_Diffusion_Main_Equation_Mixed_Weak_Form}
	\begin{align}
	\bm q+\nabla u&=0&\text{ in }\Omega\times (0,T],\label{Drift_Diffusion_Main_Equation_Mixed_Weak_Form_a}\\
	\bm p+\nabla {\phi}&=0&\text{ in }\Omega\times (0,T],\label{Drift_Diffusion_Main_Equation_Mixed_Weak_Form_b}\\
	u_t+ \nabla\cdot\bm q - \nabla\cdot(\bm p u)&=0&\text{ in }\Omega\times (0,T],\label{Drift_Diffusion_Main_Equation_Mixed_Weak_Form_c}\\
	\nabla\cdot\bm p+u&=0&\text{ in }\Omega\times (0,T],\label{Drift_Diffusion_Main_Equation_Mixed_Weak_Form_d}\\
	u&=g_u&\text{ on }\partial\Omega\times(0,T],\label{Drift_Diffusion_Main_Equation_Mixed_Weak_Form_e}\\
	{\phi}&=g_{{\phi}}&\text{ on }\partial\Omega\times(0,T],\label{Drift_Diffusion_Main_Equation_Mixed_Weak_Form_f}\\
	u(\cdot,0)&=u_0&\text{ in }\Omega.\label{Drift_Diffusion_Main_Equation_Mixed_Weak_Form_g}
	\end{align}
\end{subequations}
{We can now introduce our HDG formulation by first defining the mesh.} Let $\mathcal{T}_h$ denote a collection of disjoint  simplexes $K$ that partition $\Omega$ and let  $\partial \mathcal{T}_h$ be the set $\{\partial K: K\in \mathcal{T}_h\}$. {Here $h$ denotes the maximum diameter of the simplices in $\mathcal{T}_h$.  Since we will need to use an inverse inequality in our analysis, we assume that the mesh is shape regular and quasi-uniform.}

{We denote by
	$\mathcal{E}_h$ the set of all faces in the mesh. Then we define the set of interior and boundary faces (or edges when $d=2$) denoted $\mathcal{E}_h^o $ and $\mathcal{E}_h^\partial $ respectively. For each edge $e$ we say $e\in \mathcal{E}_h^o $ is} an interior face if the Lebesgue measure of  $e = \partial K^+ \cap \partial K^-$ {for some pair of elements $K^+,K^-\in\mathcal{T}_h$} is non-zero, similarly, $e \in \mathcal{E}_h^{\partial}$ is a boundary face if the Lebesgue measure of  $e = \partial K \cap \partial \Omega$ is non-zero. We  set
\begin{align*}
(w,v)_{\mathcal{T}_h} := \sum_{K\in\mathcal{T}_h} (w,v)_K,   \quad\quad\quad\quad\left\langle \zeta,\rho\right\rangle_{\partial\mathcal{T}_h} := \sum_{K\in\mathcal{T}_h} \left\langle \zeta,\rho\right\rangle_{\partial K},
\end{align*}
where $(\cdot,\cdot)_K$  denotes the $L^2(K)$ inner product and 
$ \langle \cdot, \cdot \rangle_{\partial K} $ denotes the $L^2$ inner product on $\partial K$.

{The HDG method uses discontinuous finite element spaces $\bm Q_h$, $ V_h$, $\widehat V_{h}$,  $\bm S_h$,  $\Psi_h$, $\widehat \Psi_{h}$ that we shall discuss shortly.  Assuming these are given, the approximate the solution of the mixed weak problem \eqref{Drift_Diffusion_Main_Equation_Mixed_Weak_Form} by} the HDG method seeks $(\bm q_h ,u_h ,\widehat u_h )\in \bm Q_h\times V_h\times\widehat V_{h}({g_u})$ and
$(\bm p_h ,{\phi}_h ,\widehat{\phi}_h )
\in \bm S_h\times \Psi_h\times\widehat \Psi_{h}(g_{\phi})$ satisfying
\begin{subequations}\label{Drift_Diffusion_HDG_Formulation}
	\begin{align}
	(\bm q_h, \bm r_1)_{\mathcal T_h} - (u_h,\nabla\cdot\bm r_1)_{\mathcal T_h} + \langle \widehat u_h, \bm r_1\cdot \bm n \rangle_{\partial \mathcal T_h}&=0,\label{Drift_Diffusion_HDG_Formulation_a}\\
	(\bm p_h, \bm r_2)_{\mathcal T_h} - (\phi_h,\nabla\cdot\bm r_2)_{\mathcal T_h} + \langle \widehat \phi_h, \bm r_1\cdot \bm n \rangle_{\partial \mathcal T_h}&=0,\label{Drift_Diffusion_HDG_Formulation_b}
	\end{align}
	for all $(\bm r_1, \bm r_2) \in \bm Q_h\times \bm S_h$, together with
	\begin{align}
	(u_{h,t}, w_1)_{\mathcal T_h} - (\bm q_h, \nabla w_1)_{\mathcal T_h} + \langle \widehat {\bm q}_h\cdot\bm n, w_1\rangle_{\partial \mathcal T_h} + (\bm p_h u_h, \nabla w_1)_{\mathcal T_h}&\nonumber \\
	- \langle \widehat {\bm p}_h\cdot\bm n \widehat u_h, w_1\rangle_{\partial \mathcal T_h}&=0,\label{Drift_Diffusion_HDG_Formulation_c}\\
	- (\bm p_h, \nabla w_2)_{\mathcal T_h} + \langle \widehat {\bm p}_h\cdot\bm n, w_2\rangle_{\partial \mathcal T_h} + (u_h,  w_2)_{\mathcal T_h} &=0\label{Drift_Diffusion_HDG_Formulation_d}
	\end{align}
	for all $(w_1,w_2)\in V_h\times \Psi_h$. The boundary fluxes must satisfy
	\begin{align}
	\langle \widehat{\bm q}_h\cdot\bm n,\mu_1\rangle_{\partial \mathcal T_h\backslash\partial \Omega}&=0,\label{Drift_Diffusion_HDG_Formulation_e}\\
	\langle \widehat{\bm p}_h\cdot\bm n,\mu_2\rangle_{\partial \mathcal T_h\backslash\partial \Omega} &=0\label{Drift_Diffusion_HDG_Formulation_f}
	\end{align}
	for all $(\mu_1,\mu_2)\in \widehat V_h(0)\times \widehat \Psi_h(0)$. The numerical fluxes $\widehat {\bm q}_h$ and $\widehat {\bm p}_h$ will be specified later.
\end{subequations}

As in \cite{Liu_LDG_DD_SCM_2016,Cesmelioglu_NS_MathComp_2017}, we shall need the following energy estimate
\begin{align}\label{important_argument}
\hspace{1em}&\hspace{-1em}\|\nabla u_h\|_{\mathcal T_h} + \| h_K^{-1/2}(u_h - \widehat u_h)\|_{\partial \mathcal T_h}^2 \nonumber\\
&\quad \le C\left(\|\bm q_h\|_{\mathcal T_h}^2 + \| h_K^{-1/2}(\Pi_k^\partial u_h - \widehat u_h)\|_{\partial \mathcal T_h}^2\right).
\end{align}
where $\Pi_k^\partial$ is a $L^2$ projection define in \eqref{L2_projection}.  Inequality \eqref{important_argument} cannot hold for the  HDG$_k$ method unless we take the stabilization function to be $h_K^{-1}$. However, in this case we only have a suboptimal convergence rate for the flux $\bm q $. Hence we need to use the HDG(A) method to approximate the equation \eqref{Drift_Diffusion_Main_Equation_a}, i.e., we choose  
\begin{align*}
\bm Q_h&:=\{\bm v_h\in [L^2(\Omega)]^d:\bm v_h|_K\in [\mathcal{P}^{k}(K)]^{d},\forall K\in\mathcal{T}_h\},\\
V_h&:=\{v_h\in L^2(\Omega):v_h|_K\in \mathcal{P}^{k+1}(K),\forall K\in\mathcal{T}_h\},\\
\widehat V_h(g)&:=\{\widehat{v}_h\in L^2(\mathcal{E}_h):\widehat v_h|_E\in \mathcal{P}^{k}(\mathcal{E}_h),\forall E\in\mathcal{E}_h, \widehat v_h|_{\mathcal{E}_h^{\partial}}=\Pi_k^{\partial}g\},
\end{align*}
where $\mathcal{P}^k(K)$ denotes the set of polynomials of degree at most $k$ on the element $K$
{(similarly $ \mathcal{P}^{k}(\mathcal{E}_h)$ denotes the set of polynomials of degree at most $k$ on the 
	faces in the mesh).}
Moreover, the  numerical trace of the flux on $\partial \mathcal T_h$ is defined as
\begin{align}
\widehat{\bm q}_h \cdot\bm n = {\bm q}_h \cdot\bm n  + h_K^{-1}(\Pi_k^\partial u_h - \widehat u_h),\label{Drift_Diffusion_HDG_Formulation_g}
\end{align}
{where $\Pi_k^\partial$ denotes $L^2$ projection onto $ \mathcal{P}^{k}(\mathcal{E}_h)$ which can be done face by face.}

To avoid  a reduction in the convergence rate for the solution $u_h$, the polynomial degree of the space $V_h$ for $u_h$ and the space $\bm S_h$ for $\bm p_h$ need to be the same, i.e.,
\begin{align*}
\bm S_h:=\{\bm v_h\in [L^2(\Omega)]^d:\bm v_h|_K\in [\mathcal{P}^{k+1}(K)]^{d},\forall K\in\mathcal{T}_h\}.
\end{align*}
If we choose the HDG(A) method to discretize \eqref{Drift_Diffusion_Main_Equation_b} we would need to use polynomials of degree $k+2$ to approximate $\phi$, but in this case, we get a suboptimal convergence rate for $\phi$. Therefore, we use HDG$_{k+1}$ to discretize \eqref{Drift_Diffusion_Main_Equation_b} and so choose
\begin{align*}
\Psi_h&:=\{v_h\in L^2(\Omega):v_h|_K\in \mathcal{P}^{k+1}(K),\forall K\in\mathcal{T}_h\},\\
\widehat \Psi_h(g)&:=\{\widehat{v}_h\in L^2(\mathcal{E}_h):\widehat v_h|_E\in \mathcal{P}^{k+1}(\mathcal{E}_h),\forall E\in\mathcal{E}_h, \widehat v_h|_{\mathcal{E}_h^{\partial}}=\Pi_{k+1}^{\partial}g\}.
\end{align*}
and the  numerical trace of the flux on $\partial \mathcal T_h$ is defined as
\begin{align}
\widehat{\bm p}_h \cdot\bm n = {\bm p}_h \cdot\bm n  + \tau(\phi_h - \widehat \phi_h),\label{Drift_Diffusion_HDG_Formulation_h}
\end{align}
where $\tau$ is a positive $\mathcal O(1)$ function and the initial condition $u_h(0)$ will be specifically in \Cref{HDG_elliptic_projection_and_basic_estimates}. {If needed, $\tau$ can be chosen to provide upwind
	stabilization as in \cite{Qiu_Shi_Convection_Diffusion_JSC_2016}.}

The organization of the paper is as follows. In \Cref{error_analysis}, we present our main results and some useful projections. Then   the proof of the main results  is given in \Cref{Proofofmain_theorem}. In \Cref{Numerical_Results}, we provide some numerical experiments to support our theoretical results.

{In this paper we denote by $\Vert\cdot\Vert_{s,D}$ the $H^s(D)$ Sobolev norm. As we have already done, bold face quantities denote vectors. If $s$ is not present, the $L^2$ norm is assumed so that, for example,
	$
	\Vert w\Vert_{{\cal T}_h}=\sqrt{(w,w)_{{\cal T}_h}}.
	$}

\section{Main result and preliminary material}
\label{error_analysis}

In this section, we first present the main result in \Cref{main_result} for the semidiscrete HDG formulation \eqref{Drift_Diffusion_HDG_Formulation}. Next, we  provide preliminary material in \Cref{Preliminary_material},  which are required for the analysis. 

We use the standard notation $W^{m,p}(D)$ for Sobolev spaces on $D$ with norm $\|\cdot\|_{m,p,D}$ and seminorm $|\cdot|{m,p,D}$. We also write $H^m(D)$ instead of $W^{m,2}(D)$, and we omit the index $p$ in the corresponding norms and seminorms. Moreover, we omit the index $m$ when $m=0$.

Throughout, we assume the data and the solution of \eqref{Drift_Diffusion_Main_Equation}  are smooth enough for our analysis.

\subsection{Main result}
\label{main_result}
{The proof of our main error estimate relies on the use of  duality arguments and requires sufficient regularity for the solution of the corresponding problem.  In particular:}
\begin{assumption} {Let $\bm p\in H^1((0,T),\bm W_1^\infty(\Omega))$ denote a given vector function of position and time.  Let $M>0$ such that for all time $t\in(0,T)$
		\begin{align}
		M\ge \|\nabla\cdot\bm p(t)\|_{0,\infty}+2\|\partial_t\bm p(t)\|_{0,\infty}.\label{Mge}
		\end{align}
		If ${\bm p}=0$, set $M=0$.
		Then, for  $\Theta\in L^2(\Omega\times(0,T))$,  let $(\bm\Psi,\Phi)$  be the solution of 
		\begin{align}
		\begin{split}
		\bm{\Psi} + \nabla\Phi&=0  \ \qquad \textup{ in }\Omega,\\
		M\Phi+\nabla\cdot\bm\Psi +  \bm p  \cdot\nabla \Phi
		&=\Theta \qquad \textup{ in }\Omega,\\
		\Phi&=0\qquad \ \  \textup{on } \partial\Omega.
		\end{split}\label{D2}
		\end{align}
		We assume the solution $(\bm\Psi,\Phi)$ has the  following regularity
		\begin{align}\label{elliptic_regularity}
		\|\bm\Psi\|_{H^1(\Omega)}+\|\Phi\|_{H^2(\Omega)}&\le C_{\text{reg}}\|\Theta\|_{\mathcal T_h}.
		\end{align}
		It is well known that the above regularity holds if the domain is convex, which is usually the case in solar cell
		applications.}
\end{assumption}	


We can now state our main result for the HDG method.
\begin{theorem}\label{main_theorem}
	{Assume that  (\ref{elliptic_regularity}) hold and that the mesh is quasi-uniform.}
	{Let 
		\begin{align*}
		(\bm q, u)& \in H^2((0,T),\bm H^{k+1}(\Omega))\times H^2((0,T),  H^{k+2}(\Omega)), \\
		(\bm p, \phi)& \in H^2((0,T),\bm H^{k+2}(\Omega))\times H^2((0,T), H^{k+3}(\Omega))
		\end{align*}
		solve}  \eqref{Drift_Diffusion_Main_Equation_Mixed_Weak_Form} and let $(\bm q_h, u_h, \bm p_h, \phi_h)
	{\in\bm Q_h\times V_h\times \bm S_h\times \Psi_h}$ be the solution of 
	{ the semi-discrete HDG equations }\eqref{Drift_Diffusion_HDG_Formulation}. Then we have 
	\begin{align*}
	\|  u- u_h\|_{\mathcal T_h} + \| \phi- \phi_h\|_{\mathcal T_h}  + \| \bm p- \bm p_h\|_{\mathcal T_h}  \le  C  h^{k+2}
	\end{align*}
	for all $t\in[0,T]$, and 
	\begin{align*}
	\sqrt{\int_0^T \| \bm q - \bm q_h\|_{\mathcal T_h}^2 dt } \le  C  h^{k+1}.
	\end{align*}
\end{theorem}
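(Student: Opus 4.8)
The plan is to run a standard HDG projection-based error analysis, with two ingredients forced by the structure of the scheme: the identity \eqref{important_argument} to handle the drift term, and an Aubin--Nitsche-type duality argument built around \eqref{D2} to promote the energy-order bounds to the claimed optimal orders. First I would introduce the two HDG projections of \Cref{HDG_elliptic_projection_and_basic_estimates}: for the electron block, the Lehrenfeld--Sch\"oberl / HDG(A) projection $(\bm\Pi_{\bm Q},\Pi_V)$ tuned to the numerical trace \eqref{Drift_Diffusion_HDG_Formulation_g}, with $\|\bm q-\bm\Pi_{\bm Q}\bm q\|_{\mathcal T_h}\le Ch^{k+1}$, $\|u-\Pi_V u\|_{\mathcal T_h}\le Ch^{k+2}$ and the associated trace bounds; for the Poisson block, the Cockburn--Gopalakrishnan--Sayas projection for HDG$_{k+1}$, with $\|\bm p-\bm\Pi_{\bm S}\bm p\|_{\mathcal T_h}+\|\phi-\Pi_\Psi\phi\|_{\mathcal T_h}\le Ch^{k+2}$. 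Writing $\delta^{\bm q}=\bm\Pi_{\bm Q}\bm q-\bm q_h$, $\delta^u=\Pi_V u-u_h$, $\delta^{\widehat u}=\Pi_k^\partial u-\widehat u_h$, and similarly $\delta^{\bm p},\delta^\phi,\delta^{\widehat\phi}$, and splitting every error into a projection part and a discrete part, I would subtract the HDG equations evaluated at the projected exact solution from \eqref{Drift_Diffusion_HDG_Formulation}. The projection orthogonalities cancel the leading projection-error terms, leaving on the right only (i) a higher-order consistency residual — in particular the $h_K^{-1}$-weighted mismatch between the degree-$k$ flux trace and the degree-$(k+1)$ scalar that is characteristic of HDG(A) — and (ii) the linearized drift term $\bm p_h u_h-\bm p u$, appearing both in the volume and in the numerical-trace contributions. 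Importantly, the discrete errors of the first block still satisfy \eqref{Drift_Diffusion_HDG_Formulation_a} exactly, so \eqref{important_argument} applies verbatim with $(u_h,\bm q_h,\widehat u_h)$ replaced by $(\delta^u,\delta^{\bm q},\delta^{\widehat u})$.

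Since \eqref{Drift_Diffusion_HDG_Formulation} is a smooth ODE system in $t$ with $O(h^{k+2})$ initial error (the initial datum $u_h(0)$ being the projection of $u_0$ fixed in \Cref{HDG_elliptic_projection_and_basic_estimates}), I would work on a maximal interval $[0,T^\ast]$ on which $\|u-u_h\|_{0,\infty}+\|\bm p-\bm p_h\|_{0,\infty}\le 1$, so that there $\|u_h\|_{0,\infty}+\|\bm p_h\|_{0,\infty}\le C$ and one may split $\bm p_h u_h-\bm p u=\bm p_h(u_h-u)+(\bm p_h-\bm p)u$. Testing the first-block error equations with $(\delta^{\bm q},\delta^u)$ and the error version of \eqref{Drift_Diffusion_HDG_Formulation_e} with $\delta^{\widehat u}$, the usual HDG algebra gives $\tfrac12\tfrac{d}{dt}\|\delta^u\|_{\mathcal T_h}^2+\|\delta^{\bm q}\|_{\mathcal T_h}^2+\|h_K^{-1/2}(\Pi_k^\partial\delta^u-\delta^{\widehat u})\|_{\partial\mathcal T_h}^2$ equal to the drift term plus the consistency residual. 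The drift term is controlled by Cauchy--Schwarz after bounding $\|\nabla\delta^u\|_{\mathcal T_h}$ through \eqref{important_argument}, using the $L^\infty$ bounds and an elliptic estimate for the Poisson block (test the errors of \eqref{Drift_Diffusion_HDG_Formulation_b} and \eqref{Drift_Diffusion_HDG_Formulation_d} with $(\delta^{\bm p},\delta^\phi)$ and use a discrete Poincar\'e inequality) giving $\|\delta^{\bm p}\|_{\mathcal T_h}+\|h_K^{-1/2}(\delta^\phi-\delta^{\widehat\phi})\|_{\partial\mathcal T_h}\le C(\|\delta^u\|_{\mathcal T_h}+h^{k+2})$. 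Absorbing the flux and jump terms and applying Gronwall yields $\|\delta^u(t)\|_{\mathcal T_h}\le Ch^{k+1}$ and $\int_0^T\|\delta^{\bm q}\|_{\mathcal T_h}^2\,dt\le Ch^{2k+2}$ on $[0,T^\ast]$; together with the projection and Poisson estimates this already proves the $\bm q$ bound of the theorem and gives the (for now suboptimal) bounds $\|u-u_h\|_{\mathcal T_h},\|\phi-\phi_h\|_{\mathcal T_h},\|\bm p-\bm p_h\|_{\mathcal T_h}\le Ch^{k+1}$ on $[0,T^\ast]$.

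The one-power loss for $\delta^u$ above is intrinsic to HDG(A) (the flux lives one degree below the scalar), so the last step recovers it by duality. For fixed $t$ and arbitrary $\Theta\in L^2(\Omega)$, let $(\bm\Psi,\Phi)$ solve the adjoint convection--diffusion--reaction problem \eqref{D2} with $\bm p=\bm p(t)$ and $M$ as in \eqref{Mge}, and discretize it with the same HDG spaces. Inserting $(\bm\Psi,\Phi)$ and its projections into the first-block error equations at time $t$, integrating by parts element by element, and using \eqref{elliptic_regularity} and the approximation estimates, I would rewrite $(\delta^u(t),\Theta)_\Omega$ as a sum of terms each bounded by $Ch^{k+2}\|\Theta\|_{\mathcal T_h}$ up to contributions of $\|\delta^{\bm q}(t)\|_{\mathcal T_h}$, $\|h_K^{-1/2}(\Pi_k^\partial\delta^u-\delta^{\widehat u})(t)\|_{\partial\mathcal T_h}$ and the Poisson-block discrete errors; the reaction coefficient $M$, and in particular its $2\|\partial_t\bm p\|_{0,\infty}$ part, is exactly what absorbs the terms generated by $\delta^u_t$ and by the $t$-dependence of $\bm p$ once the argument is closed with a Gronwall inequality in $t$. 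Taking the supremum over $\|\Theta\|_{\mathcal T_h}=1$ gives $\|\delta^u(t)\|_{\mathcal T_h}\le Ch^{k+2}$, hence $\|u-u_h\|_{\mathcal T_h}\le Ch^{k+2}$, and through the Poisson elliptic estimate $\|\phi-\phi_h\|_{\mathcal T_h},\|\bm p-\bm p_h\|_{\mathcal T_h}\le Ch^{k+2}$ on $[0,T^\ast]$. An inverse inequality then gives $\|u-u_h\|_{0,\infty}+\|\bm p-\bm p_h\|_{0,\infty}\le Ch^{k+2-d/2}\to 0$ as $h\to 0$ (using $d\le 3$, $k\ge 0$, and the assumed smoothness for the projection errors), so the threshold $1$ is never reached, $T^\ast=T$, and the proof is complete.

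I expect the duality step to be the main obstacle: one must reconcile the stationary dual problem with the time derivative and the nonlinear coupling, and choose $M$ and the dual data so that the $h_K^{-1}$-scaled consistency residual, the term $(\delta^u_t,\cdot)$, and the terms in which $\bm p(t)$ is differentiated in time are all dominated simultaneously — the precise form of \eqref{D2}--\eqref{Mge} is dictated by this requirement. A secondary but delicate point is the bookkeeping in the energy step: the drift term has both a volume and a numerical-trace part, and each must be matched to the right norm (the elementwise gradient $\|\nabla\delta^u\|_{\mathcal T_h}$ via \eqref{important_argument}, or the stabilization seminorm) so that no genuinely $O(h^{k+1})$ quantity leaks into the final $L^2$ estimates for $u$, $\phi$ and $\bm p$.
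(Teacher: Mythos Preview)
Your overall architecture---energy estimate plus a duality upgrade, closed by a bootstrap on $L^\infty$---is the right shape, and the energy step and the Poisson-block estimate are essentially what the paper does. The gap is in the duality step. When you test the time-dependent error equation for $\delta^u$ with the dual test function, the term $(\delta^u_t,\Phi)_{\mathcal T_h}$ appears, and you have no way to bound it at order $h^{k+2}\|\Theta\|_{\mathcal T_h}$: the best available estimate on $\|\delta^u_t\|_{\mathcal T_h}$ from an energy argument is again $O(h^{k+1})$ (the same HDG(A) consistency residual limits it), and $\|\Phi\|_{\mathcal T_h}\le C\|\Theta\|_{\mathcal T_h}$ carries no extra power of $h$. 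Your claim that ``$M$ \ldots\ absorbs the terms generated by $\delta^u_t$ \ldots\ once the argument is closed with a Gronwall inequality'' does not go through: $(\delta^u_t,\Phi)$ is neither of the form $C\|\delta^u\|_{\mathcal T_h}^2$ nor the time derivative of a controllable quantity, since $\Phi$ itself depends on $t$ through $\Theta=\delta^u(t)$, so Gronwall cannot be applied to it.

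The paper resolves exactly this obstruction by inserting a Wheeler-type \emph{elliptic projection}: for each $t$ it defines $(\bm q_{Ih},u_{Ih},\widehat u_{Ih})$ as the HDG solution of the \emph{stationary} convection--diffusion problem \eqref{projection01}, with the exact drift $\bm p(t)$ and the reaction term $M$ chosen via \eqref{Mge}. The duality argument based on \eqref{D2} is applied to the stationary error $u-u_{Ih}$ (and to its time derivative via \eqref{projection02}), where no $\partial_t$ appears; this is where the $2\|\partial_t\bm p\|_{0,\infty}$ part of $M$ is actually used, to control the extra $\mathscr C(\partial_t\bm p,\partial_t\bm p;\cdot)$ term in \eqref{projection02}. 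One then obtains $\|u-u_{Ih}\|_{\mathcal T_h}+\|\partial_t(u-u_{Ih})\|_{\mathcal T_h}\le Ch^{k+2}$ \emph{before} touching the time-dependent discrete error. The comparison $\xi_h^u=u_{Ih}-u_h$ is subsequently handled by a straightforward energy/Gronwall argument (\Cref{main_result_on_subinterval}), in which the right-hand side already sits at $O(h^{k+2})$ because the HDG(A) consistency residual has been absorbed into the elliptic projection. In short: replace your direct $L^2$/HDG projections for the $u$-block by the elliptic projection \eqref{projection01}, move the duality argument to that stationary comparison, and your outline becomes the paper's proof.
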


\begin{remark}
	The error estimates in \Cref{main_theorem} are optimal for the variables $\bm q$, $u$, $\bm p$ and $\phi$. Since the global degrees of freedom are the numerical traces, then from the point of view of global degrees of freedom, the error estimates for the variable $u$ is superconvergent, {which, to our knowledge,}  is the first time this has been proved in the literature.
\end{remark}

\subsection{Preliminary material}
\label{Preliminary_material}
We first introduce the HDG$_k$ projection operator $\Pi_h(\bm{p},\phi) := (\bm{\Pi}_{V} \bm{p},\Pi_{W}\phi)$ defined in \cite{Cockburn_Gopalakrishnan_Sayas_Porjection_MathComp_2010}, where
$\bm{\Pi}_{V} \bm{p}$ and $\Pi_{W}\phi$ denote components of the projection of $\bm{p}$ and $\phi$ into $\bm{S}_h$ and $\Psi_h$, respectively.
For each element $K\in\mathcal T_h$, the projection  is determined by the equations
\begin{subequations}\label{HDG_projection_operator}
	\begin{align}
	(\bm\Pi_V\bm p,\bm r)_K &= (\bm p,\bm r)_K,\qquad\qquad \forall \bm r\in[\mathcal P_{k}(K)]^d,\label{projection_operator_1}\\
	(\Pi_W\phi, w)_K &= (\phi, w)_K,\qquad\qquad \forall  w\in \mathcal P_{k}(K	),\label{projection_operator_2}\\
	\langle\bm\Pi_V\bm p\cdot\bm n+\tau\Pi_W\phi,\mu\rangle_{e} &= \langle\bm p\cdot\bm n+\tau \phi,\mu\rangle_{e},~\;\forall \mu\in \mathcal P_{k+1}(e)\label{projection_operator_3}
	\end{align}
\end{subequations}
for all faces $e$ of the simplex $K$.   The approximation properties of the HDG$_k$ projection \eqref{HDG_projection_operator} are given in the following result from \cite{Cockburn_Gopalakrishnan_Sayas_Porjection_MathComp_2010}:
\begin{lemma}\label{pro_error}
	Suppose $k\ge 0$, $\tau|_{\partial K}$ is nonnegative and $\tau_K^{\max}:=\max\tau|_{\partial K}>0$. Then the system \eqref{HDG_projection_operator} is uniquely solvable for $\bm{\Pi}_V\bm{p}$ and $\Pi_W \phi$. Furthermore, there is a constant $C$ independent of $K$ and $\tau$ such that
	\begin{subequations}
		\begin{align}
		\|{\bm{\Pi}_V}\bm{p}-\bm p\|_K &\leq Ch_{K}^{\ell_{\bm{p}}+1}|\bm{p}|_{\ell_{\bm{p}}+1,K}+Ch_{K}^{\ell_{{\phi}}+1}\tau_{K}^{*}{|\phi|}_{\ell_{{\phi}}+1,K},\label{Proerr_q}\\
		\|{{\Pi}_W}{\phi}-\phi\|_K &\leq Ch_{K}^{\ell_{{\phi}}+1}|{\phi}|_{\ell_{{\phi}}+1,K}+C\frac{h_{K}^{\ell_{{\bm{p}}}+1}}{\tau_K^{\max}}{|\nabla\cdot \bm{p}|}_{\ell_{\bm{p}},K}\label{Proerr_u}
		\end{align}
	\end{subequations}
	for $\ell_{\bm{p}},\ell_{\phi}$ in $[0,k+1]$. Here $\tau_K^{*}:=\max\tau|_{{\partial K}\backslash e^{*}}$, where $e^{*}$ is a face of  $K$ at which $\tau|_{\partial K}$ is maximum.
\end{lemma}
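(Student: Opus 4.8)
The plan is to prove \Cref{pro_error} along the standard route for HDG‑projection analysis from \cite{Cockburn_Gopalakrishnan_Sayas_Porjection_MathComp_2010}: first establish unique solvability of \eqref{HDG_projection_operator} element by element, and then obtain the bounds \eqref{Proerr_q}--\eqref{Proerr_u} by scaling to a fixed reference simplex together with the Bramble--Hilbert lemma.

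Fix $K\in\mathcal T_h$. Relations \eqref{projection_operator_1}--\eqref{projection_operator_3} form a square linear system for $(\bm\Pi_V\bm p,\Pi_W\phi)\in[\mathcal P_{k+1}(K)]^d\times\mathcal P_{k+1}(K)$, since the number of scalar equations $d\dim\mathcal P_k(K)+\dim\mathcal P_k(K)+(d+1)\dim\mathcal P_{k+1}(e)$ equals the number of unknowns (using $\dim\mathcal P_{k+1}(K)-\dim\mathcal P_k(K)=\dim\mathcal P_{k+1}(e)$). Hence it suffices to prove injectivity, so I set $\bm p=0$, $\phi=0$ and let $(\bm v,w)$ be the resulting solution. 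Taking $\bm r=\nabla w\in[\mathcal P_k(K)]^d$ in \eqref{projection_operator_1}, integrating by parts, using \eqref{projection_operator_2} with test function $\nabla\cdot\bm v\in\mathcal P_k(K)$, and then taking $\mu=w|_e$ in \eqref{projection_operator_3} face by face gives
\[
0=(\bm v,\nabla w)_K=-(\nabla\cdot\bm v,w)_K+\langle\bm v\cdot\bm n,w\rangle_{\partial K}=-\langle\tau w,w\rangle_{\partial K},
\]
so, since $\tau\ge0$, $\tau^{1/2}w$ vanishes on $\partial K$. Because $\tau_K^{\max}>0$ (and $\tau$ is constant on each face, the usual choice) $w$ vanishes on the face $e^\ast$ on which $\tau$ is maximal; writing $w=\lambda_{e^\ast}r$ with $\lambda_{e^\ast}$ the barycentric coordinate vanishing on $e^\ast$ and $r\in\mathcal P_k(K)$, and testing \eqref{projection_operator_2} with $r$ gives $\int_K\lambda_{e^\ast}r^2=0$, hence $r=0$ and $w=0$. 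Then \eqref{projection_operator_3} reduces to $\langle\bm v\cdot\bm n,\mu\rangle_e=0$ for all $\mu\in\mathcal P_{k+1}(e)$ on every face, so $\bm v\cdot\bm n=0$ on $\partial K$; testing \eqref{projection_operator_1} with $\bm r=\nabla z$, $z\in\mathcal P_{k+1}(K)$, shows $(\nabla\cdot\bm v,z)_K=0$ for all such $z$, hence $\nabla\cdot\bm v=0$; the classical fact that a divergence‑free field in $[\mathcal P_{k+1}(K)]^d$ with vanishing normal trace that is $L^2(K)$‑orthogonal to $[\mathcal P_k(K)]^d$ must vanish (a consequence of the polynomial de Rham complex on a simplex) then yields $\bm v=0$. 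This proves solvability.

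For the estimates I would let $\bm P$ and $P$ denote the $L^2(K)$‑projections onto $[\mathcal P_{k+1}(K)]^d$ and $\mathcal P_{k+1}(K)$, and set $\bm\delta:=\bm\Pi_V\bm p-\bm P\bm p$, $\sigma:=\Pi_W\phi-P\phi$. By consistency of \eqref{HDG_projection_operator} and the fact that $\bm P\bm p$, $P\phi$ already reproduce all moments of degree $\le k+1$, the pair $(\bm\delta,\sigma)$ solves \eqref{HDG_projection_operator} with zero volume data and face data $\langle(\bm P\bm p-\bm p)\cdot\bm n+\tau(P\phi-\phi),\mu\rangle_e$. Mapping to a reference simplex by an affine $F_K$, using the contravariant Piola transform for the flux and the pull‑back for the scalar so that \eqref{projection_operator_1}--\eqref{projection_operator_3} are preserved up to a rescaling of $\tau$, the well‑posedness just proved together with finite dimensionality furnishes — after the appropriate normalization — a bound for $\widehat{\bm\delta}$ by the transformed face data carrying a factor of the stabilization (producing the weight $\tau_K^\ast$ on the $\phi$‑contribution after scaling back, the face $e^\ast$ entering because solvability needs only one face with $\tau>0$) and a bound for $\widehat\sigma$ by the same data divided by the stabilization (producing the weight $1/\tau_K^{\max}$ on the $\nabla\cdot\bm p$‑contribution). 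Applying Bramble--Hilbert on the reference element to $\widehat{\bm p}-\widehat{\bm P}\widehat{\bm p}$ and $\widehat\phi-\widehat P\widehat\phi$, together with a trace inequality on the reference boundary, and scaling back to $K$ produces the powers $h_K^{\ell_{\bm p}+1}$, $h_K^{\ell_\phi+1}$, $h_K^{\ell_{\bm p}+1}$ multiplying the seminorms in \eqref{Proerr_q}--\eqref{Proerr_u}; a final triangle inequality with the standard $L^2$‑projection bounds for $\bm p-\bm P\bm p$ and $\phi-P\phi$ closes the argument.

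The step I expect to be the real obstacle is this last one: extracting the \emph{asymmetric} $\tau$‑dependence — the weight $\tau_K^\ast$ on the scalar contribution to the flux error in \eqref{Proerr_q} versus the weight $1/\tau_K^{\max}$ on the divergence contribution to the scalar error in \eqref{Proerr_u} — with a constant independent of both $\tau$ and $K$. This forces one to inspect the block structure of the reduced operator on the reference element and to track how its inverse redistributes the rescaled stabilization between the flux and scalar components, and it is precisely here that the distinguished face $e^\ast$ of maximal $\tau$ must be singled out so that the bound degrades gracefully as $\tau\to0$ on the remaining faces. By contrast the dimension count, the energy identity, the Piola scaling, Bramble--Hilbert, and the concluding triangle inequalities are all routine.
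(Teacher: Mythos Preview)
The paper does not prove \Cref{pro_error}; it quotes the result verbatim from \cite{Cockburn_Gopalakrishnan_Sayas_Porjection_MathComp_2010} and uses it as a black box. Your sketch follows precisely the route taken in that reference --- square system plus injectivity via the energy identity, factorization through the barycentric coordinate of the face $e^\ast$, the polynomial de~Rham fact to kill the remaining divergence-free piece, and then a reference-element / Bramble--Hilbert argument applied to the residual pair $(\bm\Pi_V\bm p-\bm P\bm p,\ \Pi_W\phi-P\phi)$ --- so there is nothing to contrast with the present paper. Your identification of the delicate step is accurate: in \cite{Cockburn_Gopalakrishnan_Sayas_Porjection_MathComp_2010} the asymmetric weights $\tau_K^\ast$ and $1/\tau_K^{\max}$ are obtained by explicitly parametrizing the orthogonal complements $[\mathcal P_{k+1}(K)]^d\ominus[\mathcal P_k(K)]^d$ and $\mathcal P_{k+1}(K)\ominus\mathcal P_k(K)$ with face bubbles and inverting the resulting face-coupled block system by hand, rather than by an abstract scaling argument; your outline is compatible with that computation but would need those explicit representations to actually close.
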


We next define the standard $L^2$ projections $\bm\Pi_{k}^o :  [L^2(\Omega)]^d \to \bm Q_h$, $\Pi_{k+1}^o :  L^2(\Omega) \to V_h$, and $\Pi_k^{\partial}:  L^2(\mathcal E_h) \to \widehat V_h$, which satisfy
\begin{equation}\label{L2_projection}
\begin{split}
(\bm\Pi_k^o \bm q,\bm r_1)_{K} &= (\bm q,\bm r_1)_{K} ,\qquad \forall \bm r_1\in [{\mathcal P}_{k}(K)]^d,\\
(\Pi_{k+1}^o u,w_1)_{K}  &= (u,w_1)_{K} ,\qquad \forall w_1\in \mathcal P_{k+1}(K),\\
\langle \Pi_k^\partial u, \mu_1\rangle_{ e} &= \left\langle  u, \mu_1\right\rangle_{e }, \qquad\;\;\; \forall \mu_1\in \mathcal P_{k}(e).
\end{split}
\end{equation}
In the analysis, we use the following classical results \cite[Lemma 3.3]{ChenSinglerZhang1}:
\begin{subequations}\label{classical_ine}
	\begin{align}
	\|{\bm q -\bm\Pi_k^o \bm q}\|_{\mathcal T_h} &\le  C h^{k+1} \|{\bm q}\|_{k+1,\Omega},\quad\| {u -{\Pi_{k+1}^o u}}\|_{\mathcal T_h} \le  C h^{k+2} \|{u}\|_{k+2,\Omega},\label{eq27a}\\
	\| {u - {\Pi_{k+1}^o u}}\|_{\partial\mathcal T_h} &\le  C h^{k+\frac 3 2} \|{u}\|_{k+2,\Omega},
	\quad \| {w}\|_{\partial \mathcal T_h} \le  C h^{-\frac 12} \| {w}\|_{ \mathcal T_h}, \: \forall w\in V_h.\label{eq27b}
	\end{align}
\end{subequations}

To shorten lengthy equations, we rewrite the HDG formulation \eqref{Drift_Diffusion_HDG_Formulation} in the following compact form: find 
$(\bm q_h ,u_h ,\widehat u_h )\in \bm Q_h\times V_h\times\widehat V_{h}(g_u)$ and
$(\bm p_h ,{\phi}_h ,\widehat{\phi}_h )
\in \bm S_h\times \Psi_h\times\widehat \Psi_{h}(g_{\phi})$ such that
\begin{subequations}\label{Drift_Diffusion_HDG_Formulation_Compact}
	\begin{align}
	(\partial_tu_h ,w_1)_{\mathcal{T}_h}+\mathscr A(\bm q_h ,u_h ,\widehat u_h;\bm r_1,w_1,\mu_1)+\mathscr C(\bm p_h,\widehat {\bm p}_h; u_h ,\widehat u_h;w_1)=0,\label{Drift_Diffusion_HDG_Formulation_Compact_first}\\
	{\mathscr{B}}(\bm p_h ,{\phi}_h ,\widehat{\phi}_h ;\bm r_2,w_2,\mu_2)+( u_h,w_2)_{\mathcal{T}_h}=0,\label{Drift_Diffusion_HDG_Formulation_Compact_second}
	\end{align}
	for all
	$(\bm r_1, \bm r_2, w_1,w_2,\mu_1,\mu_2)\in \bm Q_h\times \bm S_h\times V_h\times\Psi_h\times \widehat V_h(0)\times \widehat \Psi_h(0)$, where the HDG {bilinear forms ${\mathscr {A}}$, ${\mathscr {B}}$ and  the trilinear form $\mathscr C$ are defined by}
	\begin{align}\label{def_A}
	\begin{split}
	\hspace{1em}&\hspace{-1em}{\mathscr{A}}(\bm q_h,u_h,\widehat u_h;\bm r_1, w_1, \mu_1)\\
	&=(\bm q_h,\bm r_1)_{\mathcal{T}_h}  - (u_h,\nabla\cdot\bm r_1)_{\mathcal{T}_h}
	+\langle\widehat u_h,\bm r_1\cdot \bm n
	\rangle_{\partial\mathcal{T}_h}+(\nabla\cdot \bm q_h,w_1)_{\mathcal{T}_h}\\
	&\quad-\langle \bm q_h \cdot\bm n,\mu_1 \rangle_{\partial\mathcal{T}_h}+\langle h_{K}^{-1}(\Pi_{k}^{\partial}u_h-\widehat u_h),\Pi_{k}^{\partial} w_1-\mu_1\rangle_{\partial\mathcal{T}_h}
	\end{split}
	\end{align}
	for all $(\bm q_h ,u_h ,\widehat u_h, \bm r_1, w_1,\mu_1)\in \bm Q_h\times V_h\times\widehat V_{h}(g_u)\times \bm Q_h\times V_h\times\widehat V_{h}(0)$,
	\begin{align}\label{def_B}
	\begin{split}
	\hspace{1em}&\hspace{-1em} {\mathscr {B}}(\bm p_h, \phi_h,\widehat {\phi}_h; \bm r_2, w_2,\mu_2)\\
	&=(\bm p_h,\bm r_2)_{\mathcal{T}_h}- ({\phi_h},\nabla\cdot\bm r_2)_{\mathcal{T}_h}
	+\langle\widehat {\phi}_h,\bm r_2\cdot \bm n
	\rangle_{\partial\mathcal{T}_h}+(\nabla\cdot \bm p_h,w_2)_{\mathcal{T}_h}\\
	&\quad-\langle
	\bm p_h\cdot\bm n,\mu_2\rangle_{\partial\mathcal{T}_h}
	+\langle \tau({\phi}_h-\widehat {\phi}_h),{w_2}-\mu_2
	\rangle_{\partial\mathcal{T}_h}
	\end{split}
	\end{align}
	for all $(\bm p_h ,\phi_h ,\widehat \phi_h, \bm r_2, w_2,\mu_2)\in \bm S_h\times \Psi_h\times\widehat \Psi_{h}(g_{\phi})\times \bm S_h\times \Psi_h\times\widehat \Psi_{h}(0)$,
	\begin{align}\label{def_C}
	\hspace{1em}&\hspace{-1em} \mathscr C(\bm p,\widehat {\bm p};u_h,\widehat u_h;w_1)= ( \bm pu_h, \nabla w_1)_{\mathcal{T}_h} - \langle  \widehat {\bm p} \cdot\bm n \widehat u_h, w_1 \rangle_{\partial\mathcal{T}_h}
	\end{align}
	for all $(u_h, \widehat u_h, w_1,\mu_1)\in V_h\times\widehat V_{h}(g_{u})\times V_h\times\widehat V_{h}(0)$.
\end{subequations}

Next, we present basic properties of the operators $\mathscr A$ and $\mathscr B$.
\begin{lemma}\label{basis_property1}
	For any	$(\bm q_h ,u_h ,\widehat u_h, \bm r_1, w_1,\mu_1)\in \bm Q_h\times V_h\times\widehat V_{h}(0)\times \bm Q_h\times V_h\times\widehat V_{h}(0)$ and  $(\bm p_h ,\phi_h ,\widehat \phi_h, \\ \bm r_2, w_2,\mu_2)\in \bm S_h\times \Psi_h\times\widehat \Psi_{h}(0)\times \bm S_h\times \Psi_h\times\widehat \Psi_{h}(0)$, we have 
	\begin{align*}
	\mathscr A(\bm q_h ,u_h ,\widehat u_h; -\bm r_1, w_1,\mu_1)  =	\mathscr A( \bm r_1, w_1,\mu_1;-\bm q_h,u_h ,\widehat u_h), \\
	\mathscr B(\bm p_h ,\phi_h ,\widehat \phi_h; -\bm r_2, w_2,\mu_2)  =	\mathscr B( \bm r_2, w_2,\mu_2;-\bm p_h,\phi_h ,\widehat \phi_h), 
	\end{align*}
	and 
	\begin{align*}
	\mathscr A(\bm q_h ,u_h ,\widehat u_h; \bm q_h,u_h,\widehat u_h) =\|\bm q_h\|_{\mathcal T_h}^2 + \|h_K^{-1/2} (\Pi_k^\partial u_h - \widehat u_h)\|_{\partial \mathcal T_h}^2,\\
	\mathscr B(\bm p_h, \phi_h ,\widehat {\phi}_h;\bm p_h, \phi_h,\widehat {\phi}_h) =\|\bm p_h\|_{\mathcal T_h}^2 + \|\sqrt{\tau}(\phi_h - \widehat \phi_h)\|_{\partial \mathcal T_h}^2. 	
	\end{align*}
\end{lemma}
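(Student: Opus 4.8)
The plan is to verify all four identities by straightforward expansion from the definitions \eqref{def_A} and \eqref{def_B}, using nothing beyond the symmetry of the volume and face inner products, $(a,b)_{\mathcal T_h}=(b,a)_{\mathcal T_h}$ and $\langle a,b\rangle_{\partial\mathcal T_h}=\langle b,a\rangle_{\partial\mathcal T_h}$; in particular no integration by parts is required, and the sign flips on $-\bm r_1$ and $-\bm q_h$ (resp.\ $-\bm r_2$ and $-\bm p_h$) in the statement are there precisely so that the calculation closes without it.

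For the first pair of identities I would write out $\mathscr A(\bm q_h,u_h,\widehat u_h;-\bm r_1,w_1,\mu_1)$ and $\mathscr A(\bm r_1,w_1,\mu_1;-\bm q_h,u_h,\widehat u_h)$ from \eqref{def_A}, obtaining in each case six terms: an $L^2(\mathcal T_h)$ term pairing the two vector variables, two $(\cdot,\nabla\cdot\cdot)_{\mathcal T_h}$ terms, two $\langle\cdot,\cdot\,\bm n\rangle_{\partial\mathcal T_h}$ terms, and the stabilization term. Carrying the minus sign through, one checks that the two lists of six terms coincide summand by summand purely by the symmetry of the inner products. The argument for $\mathscr B$ is word for word the same using \eqref{def_B}, the only change being that its stabilization term involves $\tau$ and $w_2-\mu_2$ rather than $h_K^{-1}$ and $\Pi_k^\partial w_1-\mu_1$, and this term is again symmetric under the swap.

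For the coercivity identities I would take the test triple equal to the trial triple. In $\mathscr A(\bm q_h,u_h,\widehat u_h;\bm q_h,u_h,\widehat u_h)$ the two terms $-(u_h,\nabla\cdot\bm q_h)_{\mathcal T_h}$ and $(\nabla\cdot\bm q_h,u_h)_{\mathcal T_h}$ cancel, as do $\langle\widehat u_h,\bm q_h\cdot\bm n\rangle_{\partial\mathcal T_h}$ and $-\langle\bm q_h\cdot\bm n,\widehat u_h\rangle_{\partial\mathcal T_h}$, leaving $\|\bm q_h\|_{\mathcal T_h}^2+\langle h_K^{-1}(\Pi_k^\partial u_h-\widehat u_h),\Pi_k^\partial u_h-\widehat u_h\rangle_{\partial\mathcal T_h}$; since $h_K^{-1}>0$ is constant on each face, this is exactly $\|\bm q_h\|_{\mathcal T_h}^2+\|h_K^{-1/2}(\Pi_k^\partial u_h-\widehat u_h)\|_{\partial\mathcal T_h}^2$. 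The identical cancellations in $\mathscr B(\bm p_h,\phi_h,\widehat\phi_h;\bm p_h,\phi_h,\widehat\phi_h)$, together with $\tau>0$, yield $\|\bm p_h\|_{\mathcal T_h}^2+\|\sqrt\tau(\phi_h-\widehat\phi_h)\|_{\partial\mathcal T_h}^2$.

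There is no genuine difficulty in this lemma; it is a bookkeeping exercise. The only points needing attention are keeping track of which of the three components of each argument slot enters which term of \eqref{def_A}/\eqref{def_B} and carrying the minus signs correctly, so that the mixed divergence and boundary contributions pair up and cancel (in the coercivity identities) or match across the swap (in the transpose identities) without invoking any integration by parts.
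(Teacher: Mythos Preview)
Your proposal is correct and matches the paper's intent: the paper omits the proof entirely, stating only that it is ``straightforward,'' and your direct expansion from \eqref{def_A} and \eqref{def_B} using symmetry of the inner products is exactly the routine verification the authors had in mind.
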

The proof of \Cref{basis_property1} is  straightforward, hence we omit it here.

The proof of the  following two lemmas are found in \cite[Lemma 3.2 ]{Qiu_Shi_Convection_Diffusion_JSC_2016} and \cite[Equation (1.3)]{Brenner_Poincare_SINUM_2003}, respectively .
\begin{lemma} \label{energy_argument1}
	If  $(\bm q_h, u_h, \widehat u_h)$ satisfies the equation \eqref{Drift_Diffusion_HDG_Formulation_a}, then we have 
	\begin{align*}
	\|\nabla u_h\|_{\mathcal{T}_h}
	+\|h_K^{-1/2}(u_h-\widehat u_h)\|_{\partial\mathcal{T}_h}
	\le C 	\left(\|\bm q_h\|_{\mathcal{T}_h}+\|h_K^{-1/2}(\Pi_{k}^{\partial} u_h-\widehat u_h)\|_{\partial\mathcal{T}_h}
	\right).
	\end{align*}
\end{lemma}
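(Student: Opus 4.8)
The plan is to test equation \eqref{Drift_Diffusion_HDG_Formulation_a} with the gradient of $u_h$ computed element by element. Since $u_h|_K\in\mathcal P^{k+1}(K)$, this gradient satisfies $\nabla u_h|_K\in[\mathcal P^{k}(K)]^d$, hence $\nabla u_h\in\bm Q_h$ and is an admissible choice of $\bm r_1$. Integrating the term $(u_h,\nabla\cdot\bm r_1)_{\mathcal T_h}$ by parts on each element in \eqref{Drift_Diffusion_HDG_Formulation_a} and rearranging gives, for every $\bm r_1\in\bm Q_h$,
\begin{align*}
(\nabla u_h,\bm r_1)_{\mathcal T_h}=-(\bm q_h,\bm r_1)_{\mathcal T_h}+\langle u_h-\widehat u_h,\bm r_1\cdot\bm n\rangle_{\partial\mathcal T_h}.
\end{align*}
Choosing $\bm r_1=\nabla u_h$ then yields $\|\nabla u_h\|_{\mathcal T_h}^2=-(\bm q_h,\nabla u_h)_{\mathcal T_h}+\langle u_h-\widehat u_h,\nabla u_h\cdot\bm n\rangle_{\partial\mathcal T_h}$.

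For the boundary term I would use the observation that $\nabla u_h\cdot\bm n$ is face-wise a polynomial of degree $k$ (because $\nabla u_h|_K\in[\mathcal P^{k}(K)]^d$ and $\bm n$ is constant on each face), so that $u_h-\Pi_k^\partial u_h$ is $L^2(e)$-orthogonal to $\nabla u_h\cdot\bm n$ on each face $e$; hence $\langle u_h-\widehat u_h,\nabla u_h\cdot\bm n\rangle_{\partial\mathcal T_h}=\langle\Pi_k^\partial u_h-\widehat u_h,\nabla u_h\cdot\bm n\rangle_{\partial\mathcal T_h}$. Applying Cauchy--Schwarz, the discrete trace inequality $\|h_K^{1/2}\nabla u_h\|_{\partial\mathcal T_h}\le C\|\nabla u_h\|_{\mathcal T_h}$ (valid because $\nabla u_h$ is piecewise polynomial on the shape-regular mesh), together with the bound $|(\bm q_h,\nabla u_h)_{\mathcal T_h}|\le\|\bm q_h\|_{\mathcal T_h}\|\nabla u_h\|_{\mathcal T_h}$, and cancelling one factor of $\|\nabla u_h\|_{\mathcal T_h}$ (the case $\nabla u_h=0$ being trivial), I would obtain
\begin{align*}
\|\nabla u_h\|_{\mathcal T_h}\le\|\bm q_h\|_{\mathcal T_h}+C\|h_K^{-1/2}(\Pi_k^\partial u_h-\widehat u_h)\|_{\partial\mathcal T_h},
\end{align*}
which already controls the first term on the left of the lemma.

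For the jump term I would split $u_h-\widehat u_h=(u_h-\Pi_k^\partial u_h)+(\Pi_k^\partial u_h-\widehat u_h)$ and use the triangle inequality; it then remains to show $\|h_K^{-1/2}(u_h-\Pi_k^\partial u_h)\|_{\partial\mathcal T_h}\le C\|\nabla u_h\|_{\mathcal T_h}$. This is a local Poincar\'e-type estimate: on each $K$, since $\Pi_k^\partial$ reproduces constants on faces and the face-wise $L^2$-projection is a contraction, $\|u_h-\Pi_k^\partial u_h\|_{\partial K}\le\|u_h-\bar u_h\|_{\partial K}$, where $\bar u_h$ is the mean of $u_h$ over $K$; a scaled trace inequality followed by the Poincar\'e inequality on $K$ bounds the right-hand side by $C h_K^{1/2}\|\nabla u_h\|_K$, and summing the squares over $K$ gives the claim. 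Combining this with the bound on $\|\nabla u_h\|_{\mathcal T_h}$ just derived and adding the two inequalities completes the proof.

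The only mildly delicate ingredient is the observation that $\nabla u_h\cdot\bm n\in\mathcal P^{k}(e)$ on each face: this is precisely what removes the $u_h-\Pi_k^\partial u_h$ contribution from the boundary term and lets the estimate close without having to absorb a $\|\nabla u_h\|_{\mathcal T_h}$ term from the right-hand side. The remaining steps — element-wise integration by parts, the inverse/discrete trace inequality, and the local Poincar\'e bound — are routine, with shape-regularity (indeed quasi-uniformity) of the mesh entering only through the trace, inverse, and Poincar\'e constants.
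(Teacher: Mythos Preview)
Your argument is correct. The paper itself does not give a proof of this lemma; it simply cites \cite[Lemma 3.2]{Qiu_Shi_Convection_Diffusion_JSC_2016}. What you wrote is precisely the standard argument behind that cited result: test \eqref{Drift_Diffusion_HDG_Formulation_a} with $\bm r_1=\nabla u_h\in\bm Q_h$, exploit that $\nabla u_h\cdot\bm n\in\mathcal P^k(e)$ on each face to replace $u_h-\widehat u_h$ by $\Pi_k^\partial u_h-\widehat u_h$ in the boundary term, apply the discrete trace inequality, and finish with the local Poincar\'e/trace bound $\|h_K^{-1/2}(u_h-\Pi_k^\partial u_h)\|_{\partial\mathcal T_h}\le C\|\nabla u_h\|_{\mathcal T_h}$. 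All steps are valid on a shape-regular simplicial mesh, so there is nothing to add.
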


\begin{lemma}[Piecewise Poinc\'are-Friedrichs' inequality]\label{Poincare}  
	Let $v_h\in H^1(\mathcal{T}_h)$, then we have
	\begin{align*}
	\|v_h\|_{\mathcal{T}_h}^2 \le C\left(\|\nabla v_h\|_{\mathcal{T}_h}^2
	+|\langle v_h,1 \rangle_{\partial\Omega}|^2+\sum_{e\in\mathcal{E}^o_h} |e|^{d/(1-d)}\left(\int_e[\![v_h]\!]\,ds\right)^2 \right),
	\end{align*}
	{where $|e|$ denotes the measure of $e$.}
\end{lemma}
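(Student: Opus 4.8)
The plan is to reduce this piecewise inequality to a classical Poincar\'e--Friedrichs inequality for a conforming function. Denote by $\bar v_h$ the piecewise constant function with $\bar v_h|_K=\frac1{|K|}\int_K v_h$, and split $v_h=\bar v_h+(v_h-\bar v_h)$. The oscillation part is harmless: the elementwise Poincar\'e inequality gives $\|v_h-\bar v_h\|_{K}\le Ch_K\|\nabla v_h\|_{K}$, so, summing over $K$ and using that the mesh is quasi-uniform (hence $h_K\le h$ is bounded), $\|v_h-\bar v_h\|_{\mathcal T_h}^2\le C\|\nabla v_h\|_{\mathcal T_h}^2$. It therefore suffices to bound $\|\bar v_h\|_{\mathcal T_h}^2=\sum_{K\in\mathcal T_h}|K|\,(\bar v_h|_K)^2$. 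Note also that under quasi-uniformity $|e|\sim h^{d-1}$, so $|e|^{d/(1-d)}\sim h^{-d}$; consequently $|e|^{d/(1-d)}(\int_e[\![\bar v_h]\!]\,ds)^2\sim h^{d-2}(\bar v_h|_{K^+}-\bar v_h|_{K^-})^2$, i.e.\ the jump term is comparable to a discrete Dirichlet energy of the piecewise constant $\bar v_h$ on the element-connectivity graph of $\mathcal T_h$.

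The reduction is completed by two elementary comparisons. For an interior face $e=\partial K^+\cap\partial K^-$,
\[
|e|\,(\bar v_h|_{K^+}-\bar v_h|_{K^-})=\int_e[\![\bar v_h]\!]\,ds=\int_e[\![v_h]\!]\,ds+\int_e(\bar v_h|_{K^+}-v_h|_{K^+})\,ds-\int_e(\bar v_h|_{K^-}-v_h|_{K^-})\,ds,
\]
and the trace inequality $\|w\|_{\partial K}^2\le C(h_K^{-1}\|w\|_{K}^2+h_K\|\nabla w\|_{K}^2)$ applied to $w=\bar v_h|_K-v_h|_K$ (whose $L^2(K)$ norm is $\le Ch_K\|\nabla v_h\|_K$ by local Poincar\'e) shows that each of the two oscillation integrals is at most $C|e|^{1/2}h_K^{1/2}\|\nabla v_h\|_K$. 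Squaring, weighting by $|e|^{d/(1-d)}$, and summing over interior faces (each element touching only $O(1)$ faces by shape regularity) bounds the oscillation contributions by $C\|\nabla v_h\|_{\mathcal T_h}^2$; the same estimate on boundary faces gives $\big|\int_{\partial\Omega}v_h\,ds\big|^2\le 2\big|\int_{\partial\Omega}\bar v_h\,ds\big|^2+C\|\nabla v_h\|_{\mathcal T_h}^2$. So the Lemma reduces to the \emph{discrete} Poincar\'e--Friedrichs inequality for piecewise constants:
\[
\sum_{K\in\mathcal T_h}|K|\,(\bar v_h|_K)^2\le C\Big(\sum_{e\in\mathcal E_h^o}|e|^{d/(1-d)}\big(\textstyle\int_e[\![\bar v_h]\!]\,ds\big)^2+\big|\textstyle\int_{\partial\Omega}\bar v_h\,ds\big|^2\Big).
\]

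I expect this last inequality to be the main obstacle. The cleanest route is to construct a conforming competitor: let $\widetilde v\in H^1(\Omega)$ be the continuous piecewise linear function on the refinement $\mathcal T_h'$ of $\mathcal T_h$ obtained by joining the barycenter of each element to its vertices, with $\widetilde v$ equal to $\bar v_h|_K$ at the barycenter of $K$ and equal, at each original vertex, to the average of the values $\bar v_h|_{K'}$ over the elements $K'$ containing that vertex. Shape regularity together with scaling to the reference element should yield the three norm relations $\sum_{K}|K|\,(\bar v_h|_K)^2\le C\|\widetilde v\|_{\mathcal T_h}^2$, $\|\nabla\widetilde v\|_{\mathcal T_h}^2\le C\sum_{e\in\mathcal E_h^o}h^{d-2}(\bar v_h|_{K^+}-\bar v_h|_{K^-})^2$, and $\big|\int_{\partial\Omega}\widetilde v\,ds\big|^2\le 2\big|\int_{\partial\Omega}\bar v_h\,ds\big|^2+C\sum_{e}h^{d-2}(\cdots)^2$, the vertex averaging entering the last two through short ($O(1)$-length) chains of face-adjacent elements inside each vertex star. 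Applying the classical Poincar\'e--Friedrichs inequality $\|\widetilde v\|_{\mathcal T_h}\le C(\|\nabla\widetilde v\|_{\mathcal T_h}+\big|\int_{\partial\Omega}\widetilde v\,ds\big|)$ for the bounded Lipschitz domain $\Omega$ then closes the loop. An alternative, following Brenner, bypasses the interpolant: one connects every element to $\partial\Omega$ through a chain of faces, telescopes the differences $\bar v_h|_{K^+}-\bar v_h|_{K^-}$ along the chain, and uses a discrete Cauchy--Schwarz together with a bound on the number and lengths of such chains. Either way, the technical heart is establishing these equivalences with constants depending only on $\Omega$ and the shape-regularity parameter (in particular uniformly in $h$); the remaining ingredients---trace, inverse, and local Poincar\'e inequalities---are already in the paper's toolbox.
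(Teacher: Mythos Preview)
The paper does not prove this lemma: immediately before the statement it says ``The proof of the following two lemmas are found in \ldots\ and [Equation (1.3)]\{Brenner\_Poincare\_SINUM\_2003\}, respectively,'' so the piecewise Poincar\'e--Friedrichs inequality is simply quoted from Brenner's 2003 paper with no argument given. Your sketch is therefore not competing with any in-paper proof; it is essentially a reconstruction of Brenner's result, and indeed you explicitly name Brenner's chain argument as one of your two routes for the discrete piecewise-constant step. The decomposition into a piecewise-constant part plus an oscillation controlled by local Poincar\'e and trace inequalities, followed by a discrete Poincar\'e--Friedrichs for the constants, is a standard and correct strategy for this class of inequalities. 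One minor remark: you invoke quasi-uniformity to simplify the scaling ($|e|\sim h^{d-1}$), whereas Brenner's inequality holds under shape regularity alone; since the paper assumes a quasi-uniform mesh throughout, this is harmless here, but it is worth noting that your argument as written is slightly less general than the cited result.
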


By \Cref{Poincare},  we immediately have the following lemma.
\begin{lemma}[HDG Poin\'care inequality]\label{HDG_Poincare}
	If $(v_h,\widehat v_h)\in {V}_h\times \widehat V_h(0)$, then we have
	\begin{align*}
	\|v_h\|^2_{\mathcal{T}_h}\le C
	\left(
	\|\nabla v_h\|^2_{\mathcal{T}_h}+
	\|h_K^{-1/2}(\Pi_k^{\partial}v_h-\widehat v_h)\|^2_{\partial\mathcal{T}_h}
	\right).
	\end{align*}		
\end{lemma}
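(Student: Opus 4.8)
The plan is to obtain this as an immediate consequence of the piecewise Poincar\'e--Friedrichs inequality in \Cref{Poincare}, which applies directly since $v_h\in V_h\subset H^1(\mathcal T_h)$. Once \Cref{Poincare} is invoked, the broken-gradient term $\|\nabla v_h\|_{\mathcal T_h}^2$ is already present on the right-hand side of the claimed estimate, so the whole task reduces to absorbing the two remaining inhomogeneous terms, namely the boundary mean value $|\langle v_h,1\rangle_{\partial\Omega}|^2$ and the jump sum $\sum_{e\in\mathcal E_h^o}|e|^{d/(1-d)}\big(\int_e[\![v_h]\!]\,ds\big)^2$, into $C\|h_K^{-1/2}(\Pi_k^{\partial}v_h-\widehat v_h)\|_{\partial\mathcal T_h}^2$.

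For the boundary term I would use that $\widehat v_h\in\widehat V_h(0)$ vanishes on $\partial\Omega$ and that $1\in\mathcal P^k(e)$ for every face $e$, so on each $e\in\mathcal E_h^\partial$ one has $\langle v_h,1\rangle_e=\langle\Pi_k^{\partial}v_h,1\rangle_e=\langle\Pi_k^{\partial}v_h-\widehat v_h,1\rangle_e$. Summing over the boundary faces and applying Cauchy--Schwarz gives $|\langle v_h,1\rangle_{\partial\Omega}|^2\le|\partial\Omega|\,\|\Pi_k^{\partial}v_h-\widehat v_h\|_{\partial\Omega}^2$; inserting the weights $h_K^{1/2}h_K^{-1/2}$ and using $h_e\simeq h_K\le h\le\operatorname{diam}\Omega$ (shape regularity plus boundedness of $h$) bounds this by $C\|h_K^{-1/2}(\Pi_k^{\partial}v_h-\widehat v_h)\|_{\partial\mathcal T_h}^2$.

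For the jump term I would fix an interior face $e=\partial K^+\cap\partial K^-$ with one-sided traces $v_h^{\pm}$, and use that $\widehat v_h$ is single-valued on $e$ together with $1\in\mathcal P^k(e)$ to write $\int_e[\![v_h]\!]\,ds=\langle v_h^+-v_h^-,1\rangle_e=\langle(\Pi_k^{\partial}v_h^+-\widehat v_h)-(\Pi_k^{\partial}v_h^--\widehat v_h),1\rangle_e$, so that Cauchy--Schwarz yields
\begin{align*}
\Big(\int_e[\![v_h]\!]\,ds\Big)^2\le 2|e|\Big(\|\Pi_k^{\partial}v_h^+-\widehat v_h\|_{L^2(e)}^2+\|\Pi_k^{\partial}v_h^--\widehat v_h\|_{L^2(e)}^2\Big).
\end{align*}
Since $|e|^{d/(1-d)}\,|e|=|e|^{1/(1-d)}$ and $|e|\simeq h_K^{d-1}$ by shape regularity, one has $|e|^{1/(1-d)}\simeq h_K^{-1}$, hence $|e|^{d/(1-d)}\big(\int_e[\![v_h]\!]\,ds\big)^2\le C h_K^{-1}\big(\|\Pi_k^{\partial}v_h^+-\widehat v_h\|_{L^2(e)}^2+\|\Pi_k^{\partial}v_h^--\widehat v_h\|_{L^2(e)}^2\big)$. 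Summing over $e\in\mathcal E_h^o$, and noting that each interior face contributes exactly its two one-sided pieces to $\langle\cdot,\cdot\rangle_{\partial\mathcal T_h}$, controls the jump sum by $C\|h_K^{-1/2}(\Pi_k^{\partial}v_h-\widehat v_h)\|_{\partial\mathcal T_h}^2$. Combining the two bounds with \Cref{Poincare} gives the claim.

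I do not expect a genuine obstacle: the argument is essentially bookkeeping. The only point requiring care is the exponent identity $|e|^{d/(1-d)}|e|=|e|^{1/(1-d)}\simeq h_K^{-1}$ and the resulting passage between the face-wise length scale $h_e$ and the element-wise scale $h_K$, which is where shape regularity enters; for the $\partial\Omega$ contribution one additionally uses that $h\le\operatorname{diam}\Omega$. No functional-analytic input beyond \Cref{Poincare} and the elementary mapping properties of $\Pi_k^{\partial}$ is needed, which is precisely why this lemma is stated as an immediate corollary.
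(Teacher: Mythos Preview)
Your argument is correct. It differs from the paper's proof in one notable respect. The paper first passes from \Cref{Poincare} to the stronger bound $\|v_h\|_{\mathcal T_h}^2\le C(\|\nabla v_h\|_{\mathcal T_h}^2+\|h_K^{-1/2}[\![v_h]\!]\|_{\mathcal E_h}^2)$, then inserts $\widehat v_h$ and $\Pi_k^\partial v_h$ via the triangle inequality, producing an extra term $\|h_K^{-1/2}(v_h-\Pi_k^\partial v_h)\|_{\partial\mathcal T_h}^2$ which must be absorbed into $\|\nabla v_h\|_{\mathcal T_h}^2$ by a polynomial trace/approximation estimate. You instead exploit the precise form of \Cref{Poincare}: since only the face integrals $\langle\cdot,1\rangle_e$ appear and $1\in\mathcal P^k(e)$, you can replace $v_h$ by $\Pi_k^\partial v_h$ at no cost and then subtract the single-valued $\widehat v_h$. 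This avoids the intermediate approximation step entirely, making your route slightly more elementary; the paper's route, on the other hand, would still work if \Cref{Poincare} were stated with the full $L^2$ jump norm rather than just the mean jumps.
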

\begin{proof}
	By \Cref{HDG_Poincare}, $\widehat v_h$ is zero on $\partial \Omega$ and is single valued on interior faces. We have 
	\begin{align*}
	\|v_h\|^2_{\mathcal{T}_h}&\le C
	\left(
	\|\nabla v_h\|^2_{\mathcal{T}_h}+
	\|h_K^{-1/2}[\![v_h]\!] \|^2_{\mathcal E_h}
	\right)\\
	&= C
	\left(
	\|\nabla v_h\|^2_{\mathcal{T}_h}+
	\|h_K^{-1/2}[\![v_h-\Pi_k^\partial v_h +\Pi_k^\partial v_h -\widehat v_h]\!] \|^2_{\mathcal E_h}
	\right)\\
	&\le  C
	\left(
	\|\nabla v_h\|^2_{\mathcal{T}_h}+
	\|h_K^{-1/2}(v_h-\Pi_k^\partial v_h)\|_{\partial\mathcal{T}_h}^2 +\|h_K^{-1/2}(\Pi_k^\partial v_h -\widehat v_h)\|^2_{\partial\mathcal{T}_h}
	\right)\\
	&\le C
	\left(
	\|\nabla v_h\|^2_{\mathcal{T}_h}+
	\|h_K^{-1/2}(\Pi_k^{\partial}v_h-\widehat v_h)\|^2_{\partial\mathcal{T}_h}
	\right).
	\end{align*}
\end{proof}

\section{Proof of \Cref{main_theorem}.}
\label{Proofofmain_theorem}
To prove \Cref{main_theorem}, we follow a similar strategy to that in \cite{ChenCockburnSinglerZhang1}. We first bound the error between the solution of an HDG elliptic projection {defined in} \eqref{projection01} and the solution of the system \eqref{Drift_Diffusion_Main_Equation_a}. Then we bound the error between the solution of the HDG elliptic projection \eqref{projection01} and the HDG formulation \eqref{Drift_Diffusion_HDG_Formulation_Compact_first} and the error between the solution of the system \eqref{Drift_Diffusion_Main_Equation_b} and the solution of the HDG formulation \eqref{Drift_Diffusion_HDG_Formulation_Compact_second}. A simple application of the triangle inequality then gives a bound on the error between the solution of the HDG formulation \eqref{Drift_Diffusion_HDG_Formulation_Compact} and the system \eqref{Drift_Diffusion_Main_Equation}. First, we present the HDG elliptic projection.

\subsection{HDG elliptic projection and basic estimates}
\label{HDG_elliptic_projection_and_basic_estimates}

For $t\in [0,T]$, let $({\bm q}_{Ih},u_{Ih}, \widehat u_{Ih})\in \bm Q_h\times V_h\times \widehat V_h({g_u})$ be the solution of 
\begin{align}\label{projection01}
\begin{split}
\hspace{1em}&\hspace{-1em} M(u_{Ih},w_1)_{\mathcal{T}_h}+{\mathscr {A}}(\bm q_{Ih},u_{Ih},\widehat u_{Ih}; r_1,w_1,\mu_1) + \mathscr C(\bm p,\bm p; u_{Ih},\widehat u_{Ih};w_1)\\
&=(Mu-u_t,w_1)_{\mathcal{T}_h}
\end{split}
\end{align}
for all $(r_1,w_1,\mu_1)\in \bm Q_h\times V_h\times \widehat V_h(0)$ {where}  $M$ is a given constant such that (\ref{Mge}) holds.

Take the partial derivative of \eqref{projection01} with respect to $t$, hence, $(\partial_t \bm q_{Ih}, \partial_t u_{Ih},\\ \partial_t \widehat u_{Ih})\in \bm Q_h\times V_h\times \widehat V_h(\partial_t {g_u})$ is the solution of
\begin{align}\label{projection02}
\begin{split}
\hspace{1em}&\hspace{-1em} M(\partial_t u_{Ih},w_1)_{\mathcal{T}_h}+{\mathscr {A}}(\bm \partial_t{\bm q}_{Ih},\partial_tu_{Ih},\partial_t\widehat u_{Ih}; r_1,w_1,\mu_1) \\
&\quad + \mathscr C(\partial_t\bm p,\partial_t\bm p; u_{Ih},\widehat u_{Ih};w_1) +\mathscr C(\bm p,\bm p; \partial_t u_{Ih}, \partial_t \widehat u_{Ih};w_1)  \\
&=(Mu_t-u_{tt},w_1)_{\mathcal{T}_h}
\end{split}
\end{align}
for all $(r_1,w_1,\mu_1)\in \bm Q_h\times V_h\times \widehat V_h({0})$.

We choose the initial condition $u_h(0) = u_{Ih}(0)$ for the {purposes of analysis. In fact,} the initial condition $u_h(0)$ can be chosen to be the  $L^2$ projection of $u_0$, i.e., $\Pi_k^o u_0$. 

The following result,  \Cref{HDG_elliptic_projection_approximation}, gives the error between the solution of an HDG elliptic projection \eqref{projection01} and the solution of the system \eqref{Drift_Diffusion_Main_Equation_a} and the proofs are given in \Cref{Appendix}.

\begin{theorem}\label{HDG_elliptic_projection_approximation} 
	For any $t\in [0,T]$, if the elliptic regularity inequality \eqref{elliptic_regularity} holds and $h$ is small enough, then we have the following error estimates
	\begin{subequations}
		\begin{align}
		\|u-u_{Ih}\|_{\mathcal{T}_h}&\le Ch^{k+2}\|u\|_{k+2},\label{HDG_elliptic_projection_approximation1}\\
		\|\bm q-\bm q_{Ih}\|_{\mathcal{T}_h}+\|h_K^{-1/2}(\Pi_k^{\partial}u_{Ih}-\widehat u_{Ih})\|_{\partial\mathcal{T}_h}&\le Ch^{k+1}\|u\|_{k+2}\label{HDG_elliptic_projection_approximation2}.
		\end{align}
		In addition, we have 
		\begin{align}
		\|\partial_tu-\partial_tu_{Ih}\|_{\mathcal{T}_h}&\le Ch^{k+2 }\|\partial_t u\|_{k+2}.\label{HDG_elliptic_projection_approximation3}
		\end{align}
	\end{subequations}
\end{theorem}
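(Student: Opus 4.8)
The plan is to analyze the projection defined by \eqref{projection01} as a perturbed linear HDG scheme for a convection–diffusion–reaction problem with reaction coefficient $M$, convection field $\bm p$, and zeroth-order term coming from $\mathscr C$. The key observation is that \eqref{projection01} is precisely the HDG(A) discretization (using the spaces $\bm Q_h, V_h, \widehat V_h$ chosen in the introduction) of the elliptic equation $M w - \Delta w + \nabla\cdot(\bm p w) = Mu - u_t$ whose exact solution is $w=u$ (since $u$ solves $u_t-\Delta u+\nabla\cdot(\bm p u)=0$, i.e. $\bm q+\nabla u = 0$, and $\nabla\cdot \bm q - \nabla\cdot(\bm p u) = -u_t$). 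So the task reduces to an energy estimate plus a duality argument for this linear problem.

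First I would set up the error equations. Introduce the projection-based splitting $\bm q - \bm q_{Ih} = (\bm q - \bm\Pi_k^o\bm q) + (\bm\Pi_k^o\bm q - \bm q_{Ih}) =: \bm\delta^{\bm q} + \bm\varepsilon^{\bm q}$, $u - u_{Ih} = (u - \Pi_{k+1}^o u) + (\Pi_{k+1}^o u - u_{Ih}) =: \delta^u + \varepsilon^u$, and similarly $\widehat u - \widehat u_{Ih} = (\Pi_k^\partial u - \widehat u_{Ih}) =: \varepsilon^{\widehat u}$ on faces. Using the defining orthogonality properties of the HDG(A) projection / the $L^2$ projections in \eqref{L2_projection} and the consistency of $\mathscr A$ and $\mathscr C$, I would derive that the projection errors $(\bm\varepsilon^{\bm q}, \varepsilon^u, \varepsilon^{\widehat u})$ satisfy an equation of the form
\begin{align*}
M(\varepsilon^u, w_1)_{\mathcal T_h} + \mathscr A(\bm\varepsilon^{\bm q}, \varepsilon^u, \varepsilon^{\widehat u}; \bm r_1, w_1, \mu_1) + \mathscr C(\bm p,\bm p; \varepsilon^u, \varepsilon^{\widehat u}; w_1) = \mathscr{R}(\bm r_1, w_1, \mu_1),
\end{align*}
where the residual $\mathscr R$ collects terms involving only the known projection residuals $\bm\delta^{\bm q}$, $\delta^u$ and $(u-\Pi_k^\partial u)$ on faces, each controllable by \eqref{classical_ine} (and the analogous face estimate for $\Pi_k^\partial$) with a factor $h^{k+1}$ or better. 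Taking the test functions $(\bm r_1,w_1,\mu_1) = (\bm\varepsilon^{\bm q},\varepsilon^u,\varepsilon^{\widehat u})$ and using the coercivity identity in \Cref{basis_property1} together with the handling of the convection term $\mathscr C$ — integrating by parts so that the first-order part becomes $-\frac12(\nabla\cdot\bm p\,\varepsilon^u,\varepsilon^u)$ plus boundary terms absorbed using the condition \eqref{Mge} on $M$ — yields the energy bound
\begin{align*}
\|\bm\varepsilon^{\bm q}\|_{\mathcal T_h} + \|h_K^{-1/2}(\Pi_k^\partial\varepsilon^u - \varepsilon^{\widehat u})\|_{\partial\mathcal T_h} \le C h^{k+1}\|u\|_{k+2}.
\end{align*}
Combining with \eqref{classical_ine} gives \eqref{HDG_elliptic_projection_approximation2}, and \Cref{energy_argument1} together with \Cref{HDG_Poincare} then controls $\|\nabla\varepsilon^u\|_{\mathcal T_h}$ and $\|\varepsilon^u\|_{\mathcal T_h}$, but only at order $h^{k+1}$, which is one order short of \eqref{HDG_elliptic_projection_approximation1}.

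The main obstacle — and the reason Assumption 1 is stated the way it is — is recovering the optimal order $h^{k+2}$ for $\|u - u_{Ih}\|_{\mathcal T_h}$ via an Aubin–Nitsche duality argument. Here I would take $\Theta = \varepsilon^u$ in the dual problem \eqref{D2}, obtaining $(\bm\Psi,\Phi)$ with the regularity \eqref{elliptic_regularity}; note that the adjoint of the convection operator $\nabla\cdot(\bm p\,\cdot)$ is $-\bm p\cdot\nabla$, and the adjoint reaction term is again $M$, matching \eqref{D2} exactly (the $M\Phi$ term is present precisely so the dual problem is the adjoint of the perturbed operator $Mw-\Delta w+\nabla\cdot(\bm p w)$). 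Testing the error equation against the HDG(A) projection of $(\bm\Psi,\Phi)$ (using \Cref{pro_error}, with $\ell_{\bm p}=\ell_\phi$ chosen appropriately up to $k+1$ and $\tau_K^*$ bounded since the stabilization for this scheme is $h_K^{-1}$ — care is needed here, as the Lehrenfeld–Schöberl-type stabilization must be accommodated in the projection estimates), and using the adjoint symmetry of $\mathscr A$ from \Cref{basis_property1}, I expect $\|\varepsilon^u\|_{\mathcal T_h}^2$ to be bounded by a sum of products of the already-estimated energy quantities ($O(h^{k+1})$) with projection errors of the dual solution ($O(h)\cdot C_{\text{reg}}\|\varepsilon^u\|_{\mathcal T_h}$), plus genuinely higher-order residual contributions; this closes to give $\|\varepsilon^u\|_{\mathcal T_h}\le Ch^{k+2}\|u\|_{k+2}$ and hence \eqref{HDG_elliptic_projection_approximation1}. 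The smallness of $h$ is used to absorb any terms of the form $Ch\|\varepsilon^u\|_{\mathcal T_h}^2$ that arise from the convection term in the duality pairing. Finally, \eqref{HDG_elliptic_projection_approximation3} follows by repeating the entire energy-plus-duality argument on the time-differentiated projection equation \eqref{projection02}; the extra terms $\mathscr C(\partial_t\bm p,\partial_t\bm p;u_{Ih},\widehat u_{Ih};\cdot)$ are handled by writing $u_{Ih}=u-(u-u_{Ih})$, using the already-proved estimate \eqref{HDG_elliptic_projection_approximation1} for the error part and the smoothness of $u$ and $\bm W_1^\infty$-regularity of $\bm p$ (via the bound \eqref{Mge} on $\|\partial_t\bm p\|_{0,\infty}$) for the remainder.
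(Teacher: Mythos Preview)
Your plan is essentially the paper's own proof: the same projection-based splitting via $\bm\Pi_k^o$, $\Pi_{k+1}^o$, $\Pi_k^\partial$, the same energy argument (with the $M-\tfrac12\nabla\cdot\bm p\ge 0$ trick to absorb the symmetric part of $\mathscr C$), and the same Aubin--Nitsche duality using the adjoint problem \eqref{D2} to upgrade $\|\varepsilon^u\|_{\mathcal T_h}$ from $O(h^{k+1})$ to $O(h^{k+2})$; the time-differentiated estimate is also handled as you describe.

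There is one technical misstep worth flagging. In the duality step you propose to test against ``the HDG(A) projection of $(\bm\Psi,\Phi)$'' and invoke \Cref{pro_error}. That lemma is the HDG$_{k+1}$ projection tailored to the $(\bm p,\phi)$ block with $O(1)$ stabilization $\tau$; it is neither defined for nor compatible with the HDG(A) scheme, and your parenthetical worry about $\tau_K^*$ versus $h_K^{-1}$ is a symptom of this mismatch. The paper instead tests against the plain $L^2$ projections $(\bm\Pi_k^o\bm\Psi,\Pi_{k+1}^o\Phi,\Pi_k^\partial\Phi)$, derives a dual consistency identity analogous to your primal error equation (with $\mathscr C$ replaced by its formal adjoint $\mathscr C^\star$), and then uses the skew-symmetry of $\mathscr A$ from \Cref{basis_property1} together with the identity $\mathscr C(\bm p,\bm p;\cdot)+\mathscr C^\star(\bm p,\bm p;\cdot)=\langle\bm p\cdot\bm n(\cdot-\widehat\cdot),\cdot-\widehat\cdot\rangle_{\partial\mathcal T_h}$ to reduce everything to face residuals that pair an $O(h^{k+1})$ energy quantity with an $O(h)$ dual-projection error. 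If you replace your ``HDG(A) projection'' by these $L^2$ projections, your outline goes through exactly as in the paper.
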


\subsection{Error equation between the HDG formulation \eqref{Drift_Diffusion_HDG_Formulation_Compact} and the HDG elliptic projection \eqref{projection01}.}
To bound the error between the solution of the HDG elliptic projection \eqref{projection01} and the system \eqref{Drift_Diffusion_HDG_Formulation_Compact_first}, and the error between the solution of the HDG formulation \eqref{Drift_Diffusion_HDG_Formulation_Compact_second} and the system \eqref{Drift_Diffusion_Main_Equation_b}. We first derive the error equation summarized in the next lemma. To simplify notation, we define
\begin{align*}
&\xi_h^{\bm q}=\bm q_{Ih}-\bm q_h,\ \ \ \ \ \ \
\xi_h^{u}=u_{Ih}-u_h,\ \ \ \ \ \ \
\xi_h^{\widehat u}=\widehat u_{Ih}-\widehat u_{h},\\
&\xi_h^{\bm p}=\bm{\Pi}_{V}\bm p-\bm p_h,\ \ \ \ \
\xi_h^{{\phi}}=\Pi_{W}{\phi}-{\phi}_h,\ \ \ \ 
\xi_h^{\widehat {\phi}}=\Pi_{k+1}^{\partial}{\phi}-\widehat {\phi}_{h}.
\end{align*}

\begin{lemma}\label{error_equation}
	For any	$(\bm r_1, w_1,\mu_1, \bm r_2, w_2,\mu_2)\in \bm Q_h\times V_h\times\widehat V_{h}(0)\times \bm S_h\times \Psi_h\times\widehat \Psi_{h}(0)$, we have the following error equation
	\begin{subequations}
		\begin{align}
		\hspace{1em}&\hspace{-1em}(\partial_t \xi_h^u,w_1)_{\mathcal{T}_h}+ \mathscr A
		(\xi_h^{\bm q},\xi_h^u,\xi_h^{\widehat u};\bm r_1,w_1,\mu_1)\nonumber\\
		&=(\partial_t(u_{Ih}-u),w_1)_{\mathcal{T}_h}+M(u-u_{Ih},w_1)_{\mathcal{T}_h}\nonumber\\
		&\quad- {\mathscr {C}} (\bm p,\bm p;\xi_h^u,\xi_h^{\widehat u};w_1) -  {\mathscr {C}} (\bm p - \bm p_h, \bm p-\widehat {\bm p}_h;u_h,\widehat u_h;w_1),\label{error_equation1}\\
		\hspace{1em}&\hspace{-1em} 	
		\mathscr B(\xi_h^{\bm p},\xi_h^{{\phi}},\xi_h^{\widehat {\phi}};
		\bm r_2,w_2,\mu_2)=(\bm\Pi_V \bm p - \bm p,\bm r_2)_{\mathcal{T}_h} - ( u - u_h, w_2)_{\mathcal T_h}.\label{error_equation2}
		\end{align}
	\end{subequations}
\end{lemma}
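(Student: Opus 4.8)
The plan is to derive the two error equations by subtracting, equation by equation, the defining relations of the HDG elliptic projection and the exact (projected) solution from the corresponding HDG scheme equations, and then regrouping the consistency terms. I would begin with the second identity \eqref{error_equation2}, which is purely linear and hence cleaner. Note that the exact pair $(\bm p,\phi)$ solves \eqref{Drift_Diffusion_Main_Equation_Mixed_Weak_Form_b} and \eqref{Drift_Diffusion_Main_Equation_Mixed_Weak_Form_d}; testing these against the HDG test functions and integrating by parts element-by-element (using that $\phi=g_\phi$ on $\partial\Omega$ so the boundary terms match the trace spaces) shows that the HDG$_{k+1}$ projection $(\bm\Pi_V\bm p,\Pi_W\phi,\Pi_{k+1}^\partial\phi)$ satisfies $\mathscr B(\bm\Pi_V\bm p,\Pi_W\phi,\Pi_{k+1}^\partial\phi;\bm r_2,w_2,\mu_2)=(\bm\Pi_V\bm p-\bm p,\bm r_2)_{\mathcal T_h}+(\nabla\cdot\bm p,w_2)_{\mathcal T_h}$, where the key cancellations are exactly the projection-defining equations \eqref{projection_operator_1}--\eqref{projection_operator_3} (the $\tau$-consistency identity \eqref{projection_operator_3} is what kills the stabilization term). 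Since $\nabla\cdot\bm p=-u$ by \eqref{Drift_Diffusion_Main_Equation_Mixed_Weak_Form_d}, this reads $\mathscr B(\cdots)=(\bm\Pi_V\bm p-\bm p,\bm r_2)_{\mathcal T_h}-(u,w_2)_{\mathcal T_h}$. Subtracting the HDG equation \eqref{Drift_Diffusion_HDG_Formulation_Compact_second}, namely $\mathscr B(\bm p_h,\phi_h,\widehat\phi_h;\bm r_2,w_2,\mu_2)=-(u_h,w_2)_{\mathcal T_h}$, and using linearity of $\mathscr B$ in its first three arguments yields \eqref{error_equation2}.

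For the first identity \eqref{error_equation1} I would argue similarly but now starting from the HDG elliptic projection \eqref{projection01} rather than from the PDE directly, since that projection was designed precisely to absorb the $\mathscr A$-consistency. Subtracting \eqref{Drift_Diffusion_HDG_Formulation_Compact_first} from \eqref{projection01} gives, after using linearity of $\mathscr A$,
\[
(\partial_t\xi_h^u,w_1)_{\mathcal T_h}+\mathscr A(\xi_h^{\bm q},\xi_h^u,\xi_h^{\widehat u};\bm r_1,w_1,\mu_1)
= (\partial_t u_{Ih}-\partial_t u_h,w_1)_{\mathcal T_h} - (\partial_t u_h,w_1)_{\mathcal T_h} + \text{(projection RHS)} - \text{(scheme nonlinear terms)},
\]
but the cleanest bookkeeping is: replace $(\partial_t u_h,w_1)$ via the scheme as $-\mathscr A(\bm q_h,u_h,\widehat u_h;\cdots)-\mathscr C(\bm p_h,\widehat{\bm p}_h;u_h,\widehat u_h;w_1)$, and use \eqref{projection01} to express the projection side. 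The $M$-terms and the data term $(Mu-u_t,w_1)$ from \eqref{projection01} combine with $M(u_{Ih},w_1)$ to produce $(\partial_t(u_{Ih}-u),w_1)_{\mathcal T_h}+M(u-u_{Ih},w_1)_{\mathcal T_h}$ on the right after writing $-u_t = \partial_t(u_{Ih}-u)-\partial_t u_{Ih}$ and matching the $\partial_t u_{Ih}$ against the time-derivative of the scheme — actually the slickest route is to directly subtract \eqref{Drift_Diffusion_HDG_Formulation_Compact_first} from \eqref{projection01} and collect: the $M(u_{Ih},w_1)$ and $(Mu-u_t,w_1)$ from the projection, the $\partial_t u_h$ from the scheme, giving $(\partial_t(u_{Ih}-u_h)-\partial_t u_{Ih},w_1)$... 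I would present it as: move everything to one side so that LHS is $(\partial_t\xi_h^u,w_1)+\mathscr A(\xi_h^{\bm q},\xi_h^u,\xi_h^{\widehat u};\cdots)$ and RHS collects $(\partial_t(u_{Ih}-u),w_1)+M(u-u_{Ih},w_1)$ from the projection data term together with the trilinear-form remainder $-\mathscr C(\bm p,\bm p;\xi_h^u,\xi_h^{\widehat u};w_1)-\mathscr C(\bm p-\bm p_h,\bm p-\widehat{\bm p}_h;u_h,\widehat u_h;w_1)$.

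The one genuinely non-routine point — and the step I would treat most carefully — is the splitting of the convective terms. In the projection we have $\mathscr C(\bm p,\bm p;u_{Ih},\widehat u_{Ih};w_1)$ while in the scheme we have $\mathscr C(\bm p_h,\widehat{\bm p}_h;u_h,\widehat u_h;w_1)$; their difference must be rewritten, using bilinearity of $\mathscr C$ separately in the "$\bm p$" slot and the "$u$" slot, as
\[
\mathscr C(\bm p,\bm p;u_{Ih}-u_h,\widehat u_{Ih}-\widehat u_h;w_1) + \mathscr C(\bm p-\bm p_h,\bm p-\widehat{\bm p}_h;u_h,\widehat u_h;w_1),
\]
i.e. $\mathscr C(\bm p,\bm p;\xi_h^u,\xi_h^{\widehat u};w_1)+\mathscr C(\bm p-\bm p_h,\bm p-\widehat{\bm p}_h;u_h,\widehat u_h;w_1)$. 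This is exactly the standard bilinear identity $a_1b_1-a_2b_2=a_1(b_1-b_2)+(a_1-a_2)b_2$ applied to the pair of arguments $(\bm p,u)$, once one checks that $\mathscr C$ as defined in \eqref{def_C} is indeed linear in the pair $(\bm p,\widehat{\bm p})$ (which is clear, each term is a product with one factor from that pair) and linear in $(u_h,\widehat u_h)$. I expect this regrouping, keeping the hatted trace arguments correctly paired with the volume arguments, to be the only place where a sign or argument slot could go wrong; everything else is integration by parts plus the projection identities. No duality or inverse inequality is needed here — those enter later — so the proof is short once the algebra is organized.
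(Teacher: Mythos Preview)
Your approach is correct and essentially identical to the paper's proof: for \eqref{error_equation2} you derive the consistency identity $\mathscr B(\bm\Pi_V\bm p,\Pi_W\phi,\Pi_{k+1}^\partial\phi;\bm r_2,w_2,\mu_2)=(\bm\Pi_V\bm p-\bm p,\bm r_2)_{\mathcal T_h}-(u,w_2)_{\mathcal T_h}$ via the projection identities \eqref{projection_operator_1}--\eqref{projection_operator_3} and integration by parts, then subtract \eqref{Drift_Diffusion_HDG_Formulation_Compact_second}; for \eqref{error_equation1} you subtract \eqref{Drift_Diffusion_HDG_Formulation_Compact_first} from \eqref{projection01} and split the trilinear remainder exactly as the paper does via $a_1b_1-a_2b_2=a_1(b_1-b_2)+(a_1-a_2)b_2$. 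Your middle paragraph on \eqref{error_equation1} wanders through several false starts before landing on the right bookkeeping; in a clean write-up just add $(\partial_t u_{Ih},w_1)_{\mathcal T_h}$ to both sides of the raw difference so that the left-hand side immediately becomes $(\partial_t\xi_h^u,w_1)_{\mathcal T_h}+\mathscr A(\xi_h^{\bm q},\xi_h^u,\xi_h^{\widehat u};\cdots)$ and the right-hand side is $(\partial_t(u_{Ih}-u),w_1)_{\mathcal T_h}+M(u-u_{Ih},w_1)_{\mathcal T_h}$ minus the $\mathscr C$-difference, with no need to ``replace $(\partial_t u_h,w_1)$ via the scheme.''
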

\begin{proof} 
	We first prove \eqref{error_equation1}. Subtracting equation \eqref{Drift_Diffusion_HDG_Formulation_Compact_first} from \eqref{projection01} and using the definition of $\mathscr A$ and $\mathscr C$ we get 
	\begin{align*}
	&\quad M(u_{Ih}, w_1)_{\mathcal T_h}+{\mathscr {A}}
	(\xi_h^{\bm q},\xi_h^u,\xi_h^{\widehat u};\bm r_1,w_1, \mu_1) + {\mathscr {C}} (\bm p,\bm p;u_{Ih} ,\widehat u_{Ih};w_1) \\
	&\quad  - (\partial_t u_h,w_1)_{\mathcal{T}_h}- \mathscr C(\bm p_h, \widehat {\bm p}_h; u_{h} ,\widehat u_{h};w_1)\\
	& = (Mu- u_t,w_1)_{\mathcal{T}_h}.
	\end{align*}
	This gives
	\begin{align*}
	\hspace{1em}&\hspace{-1em}(\partial_t\xi_h^u,w_1)_{\mathcal{T}_h}+{\mathscr {A}}
	(\xi_h^{\bm q},\xi_h^u,\xi_h^{\widehat u};\bm r_1,w_1, \mu_1)\\
	&\quad 
	+{\mathscr {C}} (\bm p,\bm p;u_h,\widehat u_h;w_1)- \mathscr C(\bm p_h,\widehat {\bm p}_h; u_{Ih} ,\widehat u_{Ih};w_1)\\
	&=(\partial_tu_{Ih},w_1)_{\mathcal{T}_h}-(u_t,w_1)_{\mathcal{T}_h}
	+M(u-u_{Ih},w_1)_{\mathcal{T}_h}.
	\end{align*}
	We note that the nonlinear operator $\mathscr C$ is linear for each variables, hence we have
	\begin{align*}
	\hspace{1em}&\hspace{-1em} \mathscr C(\bm p,\bm p; u_{Ih} ,\widehat u_{Ih};w_1) - {\mathscr {C}} (\bm p_h,\widehat {\bm p}_h;u_h,\widehat u_h;w_1)\\
	&=\mathscr C(\bm p,\bm p; u_{Ih} ,\widehat u_{Ih};w_1) -  \mathscr C(\bm p,\bm p; u_{h} ,\widehat u_{h};w_1)  \\
	&\quad +{\mathscr {C}} (\bm p,\bm p;u_h,\widehat u_h;w_1) - {\mathscr {C}} (\bm p_h,\widehat {\bm p}_h;u_h,\widehat u_h;w_1)\\
	& =  {\mathscr {C}} (\bm p,\bm p;\xi_h^u,\xi_h^{\widehat u};w_1) +  {\mathscr {C}} (\bm p - \bm p_h, \bm p-\widehat {\bm p}_h;u_h,\widehat u_h;w_1).
	\end{align*}
	This implies
	\begin{align*}
	\hspace{1em}&\hspace{-1em}(\partial_t \xi_h^u,w_1)_{\mathcal{T}_h}+ \mathscr A
	(\xi_h^{\bm q},\xi_h^u,\xi_h^{\widehat u};\bm r_1,w_1,\mu_1)\nonumber\\
	&=(\partial_t(u_{Ih}-u),w_1)_{\mathcal{T}_h}+M(u-u_{Ih},w_1)_{\mathcal{T}_h}\nonumber\\
	&\quad  - {\mathscr {C}} (\bm p,\bm p;\xi_h^u,\xi_h^{\widehat u};w_1) -  {\mathscr {C}} (\bm p - \bm p_h, \bm p-\widehat {\bm p}_h;u_h,\widehat u_h;w_1).
	\end{align*}
	Next, we prove \eqref{error_equation2}. By the definition of $\mathscr  B$ in \eqref{def_B}, we have
	\begin{align*}
	\hspace{1em}&\hspace{-1em} \mathscr B(\bm{\Pi}_{V}\bm p,\Pi_{W} \phi,\Pi_{k+1}^{\partial}{\phi}; \bm r_2,w_2,\mu_2)\\
	&=(\bm{\Pi}_{V}\bm p,\bm r_2)_{\mathcal{T}_h} -(\Pi_{W}{\phi},\nabla\cdot \bm r_2)_{\mathcal{T}_h} +\langle\Pi_{k+1}^{\partial}{\phi},\bm r_2 \cdot\bm n \rangle_{\partial\mathcal{T}_h}\\
	&\quad+(\nabla\cdot\bm{\Pi}_{V}\bm p,w_2)_{\mathcal{T}_h}-\langle  \bm{\Pi}_{V}\bm p\cdot \bm n, \mu_2\rangle_{\partial\mathcal{T}_h}\\
	&\quad+\langle \tau(\Pi_{W}{\phi}-\Pi_{k+1}^{\partial}{\phi}), w_2-\mu_2\rangle_{\partial\mathcal{T}_h}
	{-\langle \bm p\cdot
		\bm n,\mu_2\rangle_{\partial {\cal T}_h}.}
	\end{align*}
	By the definition  of $\bm{\Pi}_{V}$ and $\Pi_{W}$ in \eqref{HDG_projection_operator} we get 
	\begin{align*}
	\hspace{1em}&\hspace{-1em}  \mathscr B(\bm{\Pi}_{V}\bm p,\Pi_{W} \phi,\Pi_{k+1}^{\partial}{\phi}; \bm r_2,w_2,\mu_2)\\
	&=(\bm\Pi_V \bm p - \bm p,\bm r_2)_{\mathcal{T}_h} + (\bm p,\bm r_2)_{\mathcal{T}_h} -({\phi},\nabla\cdot \bm r_2)_{\mathcal{T}_h}+\langle {\phi},\bm r_2 \cdot\bm n \rangle_{\partial\mathcal{T}_h}\\
	&\quad+(\nabla\cdot(\bm{\Pi}_{V}\bm p - \bm p),w_2)_{\mathcal{T}_h} +(\nabla\cdot\bm p,w_2)_{\mathcal{T}_h} + \langle  (\bm p - \bm{\Pi}_{V}\bm p)\cdot \bm n, \mu_2\rangle_{\partial\mathcal{T}_h}\\
	&\quad+\langle \tau (\Pi_{W}{\phi}-\Pi_{k+1}^{\partial}{\phi}),  w_2-\mu_2\rangle_{\partial\mathcal{T}_h}\\
	&{=\mathscr B(\bm p,\phi,\phi; \bm r_2,w_2,\mu_2)+(\bm\Pi_V \bm p - \bm p,\bm r_2)_{\mathcal{T}_h} +(\nabla\cdot(\bm{\Pi}_{V}\bm p - \bm p),w_2)_{\mathcal{T}_h}} \\&{\quad  + \langle  (\bm p - \bm{\Pi}_{V}\bm p)\cdot \bm n, \mu_2\rangle_{\partial\mathcal{T}_h}+\langle \tau (\Pi_{W}{\phi}-\Pi_{k+1}^{\partial}{\phi}),  w_2-\mu_2\rangle_{\partial\mathcal{T}_h}}
	\end{align*}
	Since 
	\begin{align*}
	(\nabla\cdot(\bm{\Pi}_{V}\bm p - \bm p),w_2)_{\mathcal{T}_h}  &=  \langle (\bm{\Pi}_V \bm p - \bm p)\cdot \bm n, w_2  \rangle_{\partial \mathcal T_h}- (\bm{\Pi}_{V}\bm p - \bm p,\nabla w_2)_{\mathcal{T}_h} \\
	&=  \langle (\bm{\Pi}_{V}\bm p - \bm p)\cdot \bm n, w_2  \rangle_{\partial \mathcal T_h}.
	\end{align*}
	We have
	{\begin{align*}
		\hspace{1em}&\hspace{-1em}  \mathscr B(\bm{\Pi}_{V}\bm p,\Pi_{W} \phi,\Pi_{k+1}^{\partial}{\phi}; \bm r_2,w_2,\mu_2)\\
		&=\mathscr B(\bm p,\phi,\phi; \bm r_2,w_2,\mu_2)+(\bm\Pi_V \bm p - \bm p,\bm r_2)_{\mathcal{T}_h} \\&\quad+\langle  (\bm p - \bm{\Pi}_{V}\bm p)\cdot \bm n, \mu_2-w_2\rangle_{\partial\mathcal{T}_h}+\langle \tau (\Pi_{W}{\phi}-\Pi_{k+1}^{\partial}{\phi}),  w_2-\mu_2\rangle_{\partial\mathcal{T}_h}.
		\end{align*}}
	{Using  the analogue of \Cref{Drift_Diffusion_HDG_Formulation_Compact_second} for the exact solution,  and \eqref{HDG_projection_operator} we get} 	 
	\begin{align*}
	\mathscr B(\bm{\Pi}_{V}\bm p,\Pi_{W} \phi,\Pi_{k+1}^{\partial}{\phi}; \bm r_2,w_2,\mu_2) =(\bm\Pi_V \bm p - \bm p,\bm r_2)_{\mathcal{T}_h} - ( u, w_2)_{\mathcal T_h}.
	\end{align*}
	Therefore, subtracting \Cref{Drift_Diffusion_HDG_Formulation_Compact_second} we have the following error equation
	\begin{align*}
	\mathscr B(\xi_h^{\bm p},\xi_h^{{\phi}},\xi_h^{\widehat {\phi}};
	\bm r_2,w_2,\mu_2)=(\bm\Pi_V \bm p - \bm p,\bm r_2)_{\mathcal{T}_h} - ( u - u_h, w_2)_{\mathcal T_h}.
	\end{align*}
\end{proof}

\subsubsection{$L^2$ Error estimates for $p$ and $\phi$.}
\begin{lemma} \label{estimate_p1}
	We have the following estimate
	\begin{align*}
	\|\xi_h^{\bm p}\|^2_{\mathcal{T}_h}
	+\|\sqrt{\tau}(\Pi_{k+1}^{\partial}\xi_h^{{\phi}}-\xi_h^{\widehat {\phi}})\|^2_{\partial\mathcal{T}_h} \le \|u-u_h\|_{\mathcal{T}_h}\|\xi_h^{{\phi}}\|_{\mathcal{T}_h}.
	\end{align*}
\end{lemma}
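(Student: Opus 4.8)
The plan is to test the error equation \eqref{error_equation2} against the errors themselves. Take $(\bm r_2,w_2,\mu_2)=(\xi_h^{\bm p},\xi_h^{\phi},\xi_h^{\widehat\phi})$; this is an admissible choice, since $\xi_h^{\bm p}\in\bm S_h$, $\xi_h^{\phi}\in\Psi_h$, and $\xi_h^{\widehat\phi}=\Pi_{k+1}^{\partial}\phi-\widehat\phi_h$ vanishes on $\mathcal E_h^{\partial}$ (there $\Pi_{k+1}^{\partial}\phi=\Pi_{k+1}^{\partial}g_\phi=\widehat\phi_h$), hence $\xi_h^{\widehat\phi}\in\widehat\Psi_h(0)$. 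On the left-hand side, the coercivity identity of \Cref{basis_property1} gives
\[
\mathscr B(\xi_h^{\bm p},\xi_h^{\phi},\xi_h^{\widehat\phi};\xi_h^{\bm p},\xi_h^{\phi},\xi_h^{\widehat\phi})=\|\xi_h^{\bm p}\|_{\mathcal T_h}^2+\|\sqrt{\tau}\,(\xi_h^{\phi}-\xi_h^{\widehat\phi})\|_{\partial\mathcal T_h}^2 .
\]
Since the trace of $\xi_h^{\phi}|_K$ on $\partial K$ is a polynomial of degree at most $k+1$, we have $\Pi_{k+1}^{\partial}\xi_h^{\phi}=\xi_h^{\phi}$ on $\partial\mathcal T_h$, so this is exactly the left-hand side of the asserted inequality.

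The right-hand side of \eqref{error_equation2} with this test tuple is $(\bm\Pi_V\bm p-\bm p,\xi_h^{\bm p})_{\mathcal T_h}-(u-u_h,\xi_h^{\phi})_{\mathcal T_h}$. The second term is handled at once by Cauchy--Schwarz, $|(u-u_h,\xi_h^{\phi})_{\mathcal T_h}|\le\|u-u_h\|_{\mathcal T_h}\|\xi_h^{\phi}\|_{\mathcal T_h}$, which is precisely the target bound. Thus everything rests on the first term vanishing, i.e.\ on the orthogonality $(\bm\Pi_V\bm p-\bm p,\xi_h^{\bm p})_{\mathcal T_h}=0$. I expect this to be the main obstacle: the defining relation \eqref{projection_operator_1} only guarantees that $\bm\Pi_V\bm p-\bm p$ is $L^2(K)$-orthogonal to $[\mathcal P_k(K)]^d$, whereas $\xi_h^{\bm p}=\bm\Pi_V\bm p-\bm p_h$ lies in $[\mathcal P_{k+1}(K)]^d$; so one must either verify that the flux component of the HDG projection is in fact $L^2$-orthogonal to all of $\bm S_h$, or extract additional cancellation from the discrete equations.

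If that exact orthogonality is not at hand, the fallback is to retain the term, estimate $(\bm\Pi_V\bm p-\bm p,\xi_h^{\bm p})_{\mathcal T_h}\le\|\bm\Pi_V\bm p-\bm p\|_{\mathcal T_h}\|\xi_h^{\bm p}\|_{\mathcal T_h}$, absorb $\tfrac12\|\xi_h^{\bm p}\|_{\mathcal T_h}^2$ into the left-hand side via Young's inequality, and use $\|\bm\Pi_V\bm p-\bm p\|_{\mathcal T_h}=\mathcal O(h^{k+2})$ from \Cref{pro_error}; the only price is a harmless higher-order $\mathcal O(h^{2k+4})$ contribution that does not affect the later estimates. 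Either way the skeleton of the argument is the same: substitute the errors into \eqref{error_equation2}, apply \Cref{basis_property1}, and dispose of the two right-hand-side terms by Cauchy--Schwarz.
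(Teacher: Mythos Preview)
Your approach is exactly the paper's: test \eqref{error_equation2} with $(\xi_h^{\bm p},\xi_h^{\phi},\xi_h^{\widehat\phi})$ and invoke the coercivity identity from \Cref{basis_property1}. Your remark that $\Pi_{k+1}^{\partial}\xi_h^{\phi}=\xi_h^{\phi}$ on $\partial\mathcal T_h$ (because $\xi_h^{\phi}\in\Psi_h$) is correct and reconciles the statement with the coercivity identity.

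You have also put your finger on a genuine issue. The paper's proof writes the right-hand side as $-(u-u_h,\xi_h^{\phi})_{\mathcal T_h}$, i.e.\ it silently drops $(\bm\Pi_V\bm p-\bm p,\xi_h^{\bm p})_{\mathcal T_h}$. But that term does \emph{not} vanish: relation \eqref{projection_operator_1} only makes $\bm\Pi_V\bm p-\bm p$ orthogonal to $[\mathcal P_k(K)]^d$, while $\xi_h^{\bm p}\in\bm S_h\subset[\mathcal P_{k+1}(K)]^d$. So the lemma as stated, with no constant and no additive projection-error term, is not literally justified by the argument given in the paper.

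Your fallback is the correct repair. Cauchy--Schwarz and Young give
\[
(\bm\Pi_V\bm p-\bm p,\xi_h^{\bm p})_{\mathcal T_h}\le \tfrac12\|\xi_h^{\bm p}\|_{\mathcal T_h}^2+\tfrac12\|\bm\Pi_V\bm p-\bm p\|_{\mathcal T_h}^2,
\]
so after absorbing one obtains
\[
\tfrac12\|\xi_h^{\bm p}\|_{\mathcal T_h}^2+\|\sqrt{\tau}(\xi_h^{\phi}-\xi_h^{\widehat\phi})\|_{\partial\mathcal T_h}^2\le \tfrac12\|\bm\Pi_V\bm p-\bm p\|_{\mathcal T_h}^2+\|u-u_h\|_{\mathcal T_h}\|\xi_h^{\phi}\|_{\mathcal T_h}.
\]
Since $\|\bm\Pi_V\bm p-\bm p\|_{\mathcal T_h}=\mathcal O(h^{k+2})$ by \Cref{pro_error}, this extra term is harmless downstream: \Cref{lemma:step3_first_lemma} already carries a $Ch^2\|\bm\Pi_V\bm p-\bm p\|_{\mathcal T_h}^2$ contribution, and \Cref{error_estimate_p_phi} absorbs it into the $C_1h^{k+2}$ term.
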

\begin{proof} 
	We take $(\bm r_2,w_2,\mu_2)=(\xi_h^{\bm p},\xi_h^{{\phi}},\xi_h^{\widehat{{\phi}}} )$ in \eqref{error_equation2} to get
	\begin{align*}
	\mathscr B(\xi_h^{\bm p},\xi_h^{{\phi}},\xi_h^{\widehat {\phi}};
	\xi_h^{\bm p},\xi_h^{{\phi}},\xi_h^{\widehat {\phi}}) =-(u-u_h,\xi_h^{{\phi}})_{\mathcal{T}_h} \le \|u-u_h\|_{\mathcal{T}_h}\|\xi_h^{{\phi}}\|_{\mathcal{T}_h}.
	\end{align*}
	On the other hand, by \Cref{basis_property1} we have 
	\begin{align*}
	\|\xi_h^{\bm p}\|^2_{\mathcal{T}_h}
	+\|\sqrt{\tau}(\xi_h^{{\phi}}-\xi_h^{\widehat {\phi}})\|^2_{\partial\mathcal{T}_h} \le \|u-u_h\|_{\mathcal{T}_h}\|\xi_h^{{\phi}}\|_{\mathcal{T}_h}.
	\end{align*}
\end{proof}
If we directly apply \Cref{HDG_Poincare} to get the estimate of $\|\xi_h^{{\phi}}\|_{\mathcal{T}_h}$, we will obtain only suboptimal convergence rates. {To obtain optimal rates we use the
	dual problem introduced in equation (\ref{D2}) with $\bm p=0$ and $M=0$ and assume the regularity estimate (\ref{elliptic_regularity}).}

We follow the proof of \Cref{error_equation} to get the following lemma.
\begin{lemma}\label{error_equation_dual1} {Let $(\bm\Phi,\Psi)$ solve (\ref{D2}) with $\bm p=0$ and $M=0$ having data
		$\Theta$. Then for}
	any	$(\bm r_2, w_2,\mu_2)\in \bm S_h\times \Psi_h\times\widehat \Psi_{h}(0)$, we have the following  equation
	\begin{align*}
	\mathscr B(\bm{\Pi}_{V}\bm \Phi,\Pi_{W} \Psi,\Pi_{k+1}^{\partial}{\Psi}; \bm r_2,w_2,\mu_2) =(\bm\Pi_V \bm \Phi - \bm \Phi,\bm r_2)_{\mathcal{T}_h} + ( \Theta, w_2)_{\mathcal T_h}.
	\end{align*}
\end{lemma}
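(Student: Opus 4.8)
The plan is to transcribe, nearly verbatim, the computation in the proof of \Cref{error_equation} that produced the error equation \eqref{error_equation2}, now with the dual pair $(\bm\Phi,\Psi)$ and its governing system \eqref{D2} (taken with $\bm p=0$ and $M=0$, so that it reads $\bm\Phi+\nabla\Psi=0$, $\nabla\cdot\bm\Phi=\Theta$ in $\Omega$ and $\Psi=0$ on $\partial\Omega$) playing the roles previously played by $(\bm p,\phi)$ and \eqref{Drift_Diffusion_Main_Equation_b}. The regularity assumption \eqref{elliptic_regularity} makes $(\bm\Phi,\Psi)$ smooth enough that $\bm\Phi\in H(\mathrm{div};\Omega)$ and $\Psi$ is single valued, which is all that is needed for the inter-element face terms below.

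First I would expand $\mathscr B(\bm\Pi_V\bm\Phi,\Pi_W\Psi,\Pi_{k+1}^\partial\Psi;\bm r_2,w_2,\mu_2)$ using the definition \eqref{def_B} and then apply the HDG$_k$ projection relations \eqref{HDG_projection_operator}. The degree bookkeeping is exactly that of \Cref{error_equation}: for $\bm r_2\in\bm S_h$ one has $\nabla\cdot\bm r_2\in\mathcal P_k(K)$ and $\bm r_2\cdot\bm n|_e\in\mathcal P_{k+1}(e)$, and for $w_2\in\Psi_h$ one has $\nabla w_2\in[\mathcal P_k(K)]^d$. Hence \eqref{projection_operator_2} gives $(\Pi_W\Psi,\nabla\cdot\bm r_2)_{\mathcal T_h}=(\Psi,\nabla\cdot\bm r_2)_{\mathcal T_h}$, the defining property of the $L^2$ face projection gives $\langle\Pi_{k+1}^\partial\Psi,\bm r_2\cdot\bm n\rangle_{\partial\mathcal T_h}=\langle\Psi,\bm r_2\cdot\bm n\rangle_{\partial\mathcal T_h}$, and an elementwise integration by parts together with \eqref{projection_operator_1} gives $(\nabla\cdot(\bm\Pi_V\bm\Phi-\bm\Phi),w_2)_{\mathcal T_h}=\langle(\bm\Pi_V\bm\Phi-\bm\Phi)\cdot\bm n,w_2\rangle_{\partial\mathcal T_h}$. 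Collecting terms just as in \Cref{error_equation}, the two face contributions $\langle(\bm\Phi-\bm\Pi_V\bm\Phi)\cdot\bm n,\mu_2-w_2\rangle_{\partial\mathcal T_h}$ and $\langle\tau(\Pi_W\Psi-\Pi_{k+1}^\partial\Psi),w_2-\mu_2\rangle_{\partial\mathcal T_h}$ cancel by the face relation \eqref{projection_operator_3}, leaving $\mathscr B(\bm\Pi_V\bm\Phi,\Pi_W\Psi,\Pi_{k+1}^\partial\Psi;\bm r_2,w_2,\mu_2)=\mathscr B(\bm\Phi,\Psi,\Psi;\bm r_2,w_2,\mu_2)+(\bm\Pi_V\bm\Phi-\bm\Phi,\bm r_2)_{\mathcal T_h}$.

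It then remains to evaluate $\mathscr B(\bm\Phi,\Psi,\Psi;\bm r_2,w_2,\mu_2)$ using the dual equations. An elementwise integration by parts converts $-(\Psi,\nabla\cdot\bm r_2)_{\mathcal T_h}+\langle\Psi,\bm r_2\cdot\bm n\rangle_{\partial\mathcal T_h}$ into $(\nabla\Psi,\bm r_2)_{\mathcal T_h}$, and the stabilization term drops because both the element slot and the trace slot carry $\Psi$; thus $\mathscr B(\bm\Phi,\Psi,\Psi;\bm r_2,w_2,\mu_2)=(\bm\Phi+\nabla\Psi,\bm r_2)_{\mathcal T_h}+(\nabla\cdot\bm\Phi,w_2)_{\mathcal T_h}-\langle\bm\Phi\cdot\bm n,\mu_2\rangle_{\partial\mathcal T_h}$. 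The first term vanishes since $\bm\Phi+\nabla\Psi=0$, the second equals $(\Theta,w_2)_{\mathcal T_h}$ since $\nabla\cdot\bm\Phi=\Theta$, and the last vanishes because $\bm\Phi$ has single-valued normal trace across interior faces while $\mu_2\in\widehat\Psi_h(0)$ is single valued on interior faces and zero on $\partial\Omega$; combining with the previous display proves the asserted identity. I do not expect any real obstacle here, since the argument is a direct transcription of the proof of \eqref{error_equation2}; the only points requiring care are the degree counting that licenses the projection moment conditions and the observation that every inter-element face term built from the conforming exact dual solution $(\bm\Phi,\Psi)$ either telescopes or vanishes.
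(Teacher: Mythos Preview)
Your proposal is correct and follows essentially the same approach as the paper, which in fact does not spell out the proof but simply writes ``We follow the proof of \Cref{error_equation} to get the following lemma.'' Your transcription of that argument---expanding $\mathscr B$, invoking the projection relations \eqref{HDG_projection_operator} and the face projection, cancelling the two face residuals via \eqref{projection_operator_3}, and then reading off $\mathscr B(\bm\Phi,\Psi,\Psi;\bm r_2,w_2,\mu_2)=(\Theta,w_2)_{\mathcal T_h}$ from the dual equations---is exactly what the paper intends.
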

{Using this lemma we can now estimate $\xi_h^{\phi}$ in terms of $u-u_h$ and other consistency terms.}
\begin{lemma}\label{lemma:step3_first_lemma}
	For any $t\in [0,T]$, if the elliptic regularity inequality \eqref{elliptic_regularity} holds,  then we have the following error estimates
	\begin{align*}
	\|\xi_h^{{\phi}}\|^2_{\mathcal{T}_h} \le Ch^2 \|\bm \Pi_V \bm p - \bm p\|_{\mathcal T_h}^2 +C\|u -u_h \|_{\mathcal T_h}^2. 
	\end{align*}
\end{lemma}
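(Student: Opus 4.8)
The plan is an Aubin--Nitsche duality argument adapted to the HDG bilinear form $\mathscr B$. I would begin by choosing the data $\Theta=\xi_h^{\phi}$ in the dual problem \eqref{D2} with $\bm p=0$ and $M=0$; then by \Cref{error_equation_dual1} we have, for all admissible test functions, $\mathscr B(\bm\Pi_V\bm\Phi,\Pi_W\Psi,\Pi_{k+1}^{\partial}\Psi;\bm r_2,w_2,\mu_2)=(\bm\Pi_V\bm\Phi-\bm\Phi,\bm r_2)_{\mathcal T_h}+(\xi_h^{\phi},w_2)_{\mathcal T_h}$, and the regularity assumption \eqref{elliptic_regularity} gives $\|\bm\Phi\|_{1,\Omega}+\|\Psi\|_{2,\Omega}\le C_{\mathrm{reg}}\|\xi_h^{\phi}\|_{\mathcal T_h}$. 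Since $\Psi$ vanishes on $\partial\Omega$, both $\Pi_{k+1}^{\partial}\Psi$ and $\xi_h^{\widehat\phi}$ lie in $\widehat\Psi_h(0)$, so they are legitimate test functions.

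The first step is to derive the identity
\[
\|\xi_h^{\phi}\|_{\mathcal T_h}^{2}
=(\bm\Pi_V\bm\Phi-\bm\Phi,\xi_h^{\bm p})_{\mathcal T_h}
-(\bm\Pi_V\bm p-\bm p,\bm\Pi_V\bm\Phi)_{\mathcal T_h}
-(u-u_h,\Pi_W\Psi)_{\mathcal T_h}.
\]
I would obtain it by testing \Cref{error_equation_dual1} with $(\bm r_2,w_2,\mu_2)=(-\xi_h^{\bm p},\xi_h^{\phi},\xi_h^{\widehat\phi})$, rewriting the resulting $\mathscr B$ term by the symmetry of $\mathscr B$ in \Cref{basis_property1}, and then invoking the error equation \eqref{error_equation2} with $(\bm r_2,w_2,\mu_2)=(-\bm\Pi_V\bm\Phi,\Pi_W\Psi,\Pi_{k+1}^{\partial}\Psi)$; the $\mathscr B$ contributions cancel and the displayed identity is left.

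It then remains to bound the three terms on the right. For the middle term I would split $\bm\Pi_V\bm\Phi=\bm\Phi+(\bm\Pi_V\bm\Phi-\bm\Phi)$: the part paired with $\bm\Pi_V\bm\Phi-\bm\Phi$ is controlled by \Cref{pro_error} (which, since $\tau=\mathcal O(1)$, gives $\|\bm\Pi_V\bm\Phi-\bm\Phi\|_{\mathcal T_h}\le Ch\|\Psi\|_{2,\Omega}$), while the part paired with $\bm\Phi$ is handled by the orthogonality \eqref{projection_operator_1}: one subtracts the $L^2$ projection $\bm\Pi_k^o\bm\Phi$ and uses the standard estimate $\|\bm\Phi-\bm\Pi_k^o\bm\Phi\|_{\mathcal T_h}\le Ch\|\bm\Phi\|_{1,\Omega}$ valid for $k\ge 0$. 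Both contributions are of the form $Ch\,\|\bm\Pi_V\bm p-\bm p\|_{\mathcal T_h}\|\Psi\|_{2,\Omega}$, so \eqref{elliptic_regularity} and Young's inequality produce $\tfrac14\|\xi_h^{\phi}\|_{\mathcal T_h}^2+Ch^{2}\|\bm\Pi_V\bm p-\bm p\|_{\mathcal T_h}^2$. For the last term I would bound $\|\Pi_W\Psi\|_{\mathcal T_h}\le\|\Psi\|_{\mathcal T_h}+\|\Pi_W\Psi-\Psi\|_{\mathcal T_h}$ and apply \Cref{pro_error} again, noting that $\nabla\cdot\bm\Phi=\Theta=\xi_h^{\phi}$ makes the divergence term in \eqref{Proerr_u} equal to $\mathcal O(h)\|\xi_h^{\phi}\|_{\mathcal T_h}$; combined with \eqref{elliptic_regularity} this yields $\|\Pi_W\Psi\|_{\mathcal T_h}\le C\|\xi_h^{\phi}\|_{\mathcal T_h}$ and hence $\tfrac14\|\xi_h^{\phi}\|_{\mathcal T_h}^2+C\|u-u_h\|_{\mathcal T_h}^2$.

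The delicate term, and the main obstacle, is the first one $(\bm\Pi_V\bm\Phi-\bm\Phi,\xi_h^{\bm p})_{\mathcal T_h}$, because $\|\xi_h^{\bm p}\|_{\mathcal T_h}$ does not appear in the target bound and a naive Cauchy--Schwarz estimate leaves it there. The device is \Cref{estimate_p1}, which gives $\|\xi_h^{\bm p}\|_{\mathcal T_h}\le\|u-u_h\|_{\mathcal T_h}^{1/2}\|\xi_h^{\phi}\|_{\mathcal T_h}^{1/2}$; together with $\|\bm\Pi_V\bm\Phi-\bm\Phi\|_{\mathcal T_h}\le Ch\|\Psi\|_{2,\Omega}\le Ch\|\xi_h^{\phi}\|_{\mathcal T_h}$ this term is $\le Ch\,\|u-u_h\|_{\mathcal T_h}^{1/2}\|\xi_h^{\phi}\|_{\mathcal T_h}^{3/2}$, and Young's inequality with exponents $4/3$ and $4$ converts it into $\tfrac14\|\xi_h^{\phi}\|_{\mathcal T_h}^2+Ch^{4}\|u-u_h\|_{\mathcal T_h}^2$, whose extra power of $h$ is harmless for $h\le 1$. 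Collecting the three bounds and absorbing $\tfrac34\|\xi_h^{\phi}\|_{\mathcal T_h}^2$ into the left-hand side gives the claim; the only genuine subtlety, beyond bookkeeping, is this last step, where \Cref{estimate_p1} and the fractional-exponent Young inequality are exactly what prevent a stray $\|\xi_h^{\bm p}\|_{\mathcal T_h}$ from spoiling the estimate.
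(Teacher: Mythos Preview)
Your proposal is correct and follows essentially the same Aubin--Nitsche duality argument as the paper: the same dual problem, the same identity for $\|\xi_h^{\phi}\|_{\mathcal T_h}^{2}$ obtained via the symmetry of $\mathscr B$ and the two error equations, and the same use of \Cref{estimate_p1} to eliminate the stray $\|\xi_h^{\bm p}\|_{\mathcal T_h}$. The only cosmetic differences are that the paper subtracts $\nabla\Pi_W\Psi$ (rather than your $\bm\Pi_k^o\bm\Phi$) for the orthogonality step, and applies Young first to produce $Ch^{2}\|\xi_h^{\bm p}\|_{\mathcal T_h}^{2}$ before invoking \Cref{estimate_p1}, whereas you apply \Cref{estimate_p1} first and then the $4/3$--$4$ Young inequality; both routes yield the same bound.
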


\begin{proof}
	Consider the dual problem \eqref{D2} {with $\bm p=0$ and $M=0$} and $\Theta = \xi_h^{{\phi}}$. We take  $(\bm r_2,w_2,\mu_2) = (-\bm\Pi_V\bm{\Phi},\Pi_W\Psi,\Pi_{k+1}^\partial \Psi)$ in \Cref{error_equation2} of  \Cref{error_equation} to get{
		\begin{align}\label{dual_eq2}
		\mathscr B(\xi_h^{\bm p},\xi_h^{{\phi}},\xi_h^{\widehat {\phi}}
		;-\bm\Pi_V\bm{\Phi},\Pi_W\Psi,\Pi_{k+1}^\partial\Psi) = -(\bm{\Pi}_V\bm p - \bm p,\bm\Pi_V\bm{\Phi})_{\mathcal T_h}- ( u - u_h, \xi_h^{{\phi}})_{\mathcal T_h}.
		\end{align}
		On the other hand, by \Cref{basis_property1,error_equation_dual1}, we have	
		\begin{align}\label{dual_eq1}
		\begin{split}
		\mathscr B &(\xi_h^{\bm p},\xi_h^{{\phi}},\xi_h^{\widehat {\phi}}
		;-\bm\Pi_V\bm{\Phi},\Pi_W\Psi,\Pi_{k+1}^\partial\Psi)\\
		&=\mathscr B(\bm\Pi_V\bm{\Phi},\Pi_W\Psi,\Pi_{k+1}^\partial\Psi;-\xi_h^{\bm p},\xi_h^{{\phi}},\xi_h^{\widehat {\phi}})\\
		&=-(\bm \Pi_V \bm \Phi - \bm \Phi,\xi_h^{\bm p})_{\mathcal{T}_h} + \|\xi_h^{{\phi}}\|_{\mathcal{T}_h}^2.
		\end{split}
		\end{align}
		Comparing the above two equalities \eqref{dual_eq2} and \eqref{dual_eq1}} gives
	\begin{align*}
	\|\xi_h^{{\phi}}\|^2_{\mathcal{T}_h}
	&=(\bm \Pi_V \bm \Phi - \bm \Phi,\xi_h^{\bm p})_{\mathcal{T}_h}-(\bm{\Pi}_V\bm p - \bm p,\bm\Pi_V\bm{\Phi})_{\mathcal T_h}- ( u - u_h, \xi_h^{{\phi}})_{\mathcal T_h}\\
	&=(\bm \Pi_V \bm \Phi - \bm \Phi,\xi_h^{\bm p})_{\mathcal{T}_h}-(\bm{\Pi}_V\bm p - \bm p,\bm\Pi_V\bm{\Phi}- \bm \Phi)_{\mathcal T_h} \\
	&\quad -(\bm{\Pi}_V\bm p - \bm p,\bm \Phi)_{\mathcal T_h}- ( u - u_h, \xi_h^{{\phi}})_{\mathcal T_h}\\
	&=(\bm \Pi_V \bm \Phi - \bm \Phi,\xi_h^{\bm p})_{\mathcal{T}_h}-(\bm{\Pi}_V\bm p - \bm p,\bm\Pi_V\bm{\Phi}- \bm \Phi)_{\mathcal T_h} \\
	&\quad +(\bm{\Pi}_V\bm p - \bm p,\nabla\Psi)_{\mathcal T_h}- ( u - u_h, \xi_h^{{\phi}})_{\mathcal T_h}\\
	&=(\bm \Pi_V \bm \Phi - \bm \Phi,\xi_h^{\bm p})_{\mathcal{T}_h}-(\bm{\Pi}_V\bm p - \bm p,\bm\Pi_V\bm{\Phi}- \bm \Phi)_{\mathcal T_h} \\
	&\quad +(\bm{\Pi}_V\bm p - \bm p,\nabla(\Psi-\Pi_W\Psi))_{\mathcal T_h}- ( u - u_h, \xi_h^{{\phi}})_{\mathcal T_h}\\
	&\le C h^2 \|\xi_h^{{\bm p}}\|_{\mathcal T_h}^2 + Ch^2 \|\bm \Pi_V \bm p - \bm p\|_{\mathcal T_h}^2 +C\|u -u_h \|_{\mathcal T_h}^2 + \frac 1 2  \|\xi_h^{{\phi}}\|^2_{\mathcal{T}_h}.
	\end{align*}
	By \Cref{estimate_p1} and the Cauchy-Schwarz inequality we obtain the result of the lemma:
	\begin{align*}
	\|\xi_h^{{\phi}}\|^2_{\mathcal{T}_h} \le Ch^2 \|\bm \Pi_V \bm p - \bm p\|_{\mathcal T_h}^2 +C\|u -u_h \|_{\mathcal T_h}^2. 
	\end{align*}
\end{proof}

As a consequence of the above result, a simple application of the triangle inequality and \Cref{estimate_p1,lemma:step3_first_lemma} give the  following bounds of $\|\phi -\phi_h\|_{\mathcal T_h}$ and $\|\bm p - \bm p_h\|_{\mathcal T_h}$:
\begin{lemma}\label{error_estimate_p_phi}
	Let $( \bm p, \phi)$ and $(\bm p_h, \phi_h)$ be the solution of \eqref{Drift_Diffusion_Main_Equation_Mixed_Weak_Form} and \eqref{Drift_Diffusion_HDG_Formulation}, respectively. 	For any $t\in [0,T]$, if the elliptic regularity inequality \eqref{elliptic_regularity} holds, then we have the following error estimates
	\begin{align*}
	\| \phi- \phi_h\|_{\mathcal T_h}  + \| \bm p- \bm p_h\|_{\mathcal T_h}  \le  C_1  h^{k+2} + C \|u - u_h\|_{\mathcal T_h}
	\end{align*}
	{where $C_1$ depends on the $H^{k+1}(\Omega)$ norm of $\bm p$ at each time}.
\end{lemma}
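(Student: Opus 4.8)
The plan is to decompose the total errors $\phi - \phi_h$ and $\bm p - \bm p_h$ through the HDG$_{k+1}$ projection of the elliptic problem \eqref{Drift_Diffusion_Main_Equation_b}, writing
$\phi - \phi_h = (\phi - \Pi_W\phi) + \xi_h^{\phi}$ and $\bm p - \bm p_h = (\bm p - \bm\Pi_V\bm p) + \xi_h^{\bm p}$, and then bounding each piece separately by the triangle inequality. The projection-error terms $\phi - \Pi_W\phi$ and $\bm p - \bm\Pi_V\bm p$ are controlled directly by \Cref{pro_error}: with $\ell_{\bm p} = \ell_\phi = k+1$ and $\tau = \mathcal O(1)$, estimates \eqref{Proerr_q} and \eqref{Proerr_u} give $\|\bm p - \bm\Pi_V\bm p\|_{\mathcal T_h} + \|\phi - \Pi_W\phi\|_{\mathcal T_h} \le C h^{k+2}(|\bm p|_{k+2} + |\phi|_{k+2})$, where the constant absorbs $\tau_K^*$ and $1/\tau_K^{\max}$. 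This produces the $C_1 h^{k+2}$ contribution, with $C_1$ depending on the $H^{k+1}$ (indeed $H^{k+2}$) norm of $\bm p$ and $\phi$ at the given time; since $\phi$ is one order smoother than $\bm p$ in the regularity hypothesis of \Cref{main_theorem}, the constant is controlled by the $H^{k+1}$ norm of $\bm p$ as stated (we could equally phrase it via $\phi$).

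For the projection-of-the-discrete-error terms, \Cref{estimate_p1} gives $\|\xi_h^{\bm p}\|_{\mathcal T_h}^2 \le \|u - u_h\|_{\mathcal T_h}\,\|\xi_h^{\phi}\|_{\mathcal T_h}$, and \Cref{lemma:step3_first_lemma} gives $\|\xi_h^{\phi}\|_{\mathcal T_h}^2 \le Ch^2\|\bm\Pi_V\bm p - \bm p\|_{\mathcal T_h}^2 + C\|u - u_h\|_{\mathcal T_h}^2$. Combining these with the already-established bound on $\|\bm\Pi_V\bm p - \bm p\|_{\mathcal T_h}$ from \Cref{pro_error}, we get $\|\xi_h^{\phi}\|_{\mathcal T_h} \le C h^{k+3}\,|\bm p|_{k+2} + C\|u - u_h\|_{\mathcal T_h} \le C_1 h^{k+2} + C\|u - u_h\|_{\mathcal T_h}$, and then feeding this into the bound for $\|\xi_h^{\bm p}\|_{\mathcal T_h}$ via Young's inequality,
\[
\|\xi_h^{\bm p}\|_{\mathcal T_h}^2 \le \tfrac12\|u - u_h\|_{\mathcal T_h}^2 + \tfrac12\|\xi_h^{\phi}\|_{\mathcal T_h}^2 \le C\|u-u_h\|_{\mathcal T_h}^2 + C_1 h^{2k+4},
\]
so that $\|\xi_h^{\bm p}\|_{\mathcal T_h} \le C_1 h^{k+2} + C\|u - u_h\|_{\mathcal T_h}$ as well. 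Assembling the four pieces by the triangle inequality yields the claimed estimate.

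There is essentially no serious obstacle here: the lemma is a bookkeeping consequence of \Cref{pro_error}, \Cref{estimate_p1}, and \Cref{lemma:step3_first_lemma}, all of which are already in hand. The only point requiring a modicum of care is tracking which Sobolev norm of $\bm p$ (and $\phi$) the constant $C_1$ ends up depending on, and making sure the $h$-powers in the products $h^2\|\bm\Pi_V\bm p-\bm p\|_{\mathcal T_h}^2$ and $\|u-u_h\|\,\|\xi_h^{\phi}\|$ are handled so that everything that is not $\|u-u_h\|_{\mathcal T_h}$ collapses into a single $C_1 h^{k+2}$ term; the Young's inequality splitting must be arranged so no uncontrolled $\|\xi_h^{\phi}\|$ or $\|\xi_h^{\bm p}\|$ remains on the right. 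The bound on $\|u - u_h\|_{\mathcal T_h}$ itself is deliberately left for a later stage of the argument (it closes via a Grönwall estimate on the first equation), which is why it appears as a free term on the right-hand side here.
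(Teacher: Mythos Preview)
Your proposal is correct and follows exactly the route the paper indicates: the lemma is stated there as an immediate consequence of the triangle inequality together with \Cref{pro_error}, \Cref{estimate_p1}, and \Cref{lemma:step3_first_lemma}, and you have carried out precisely that bookkeeping. Your observation that the constant $C_1$ actually depends on the $H^{k+2}$ seminorm of $\bm p$ (and $|\phi|_{k+2}$), rather than merely the $H^{k+1}$ norm as the paper states, is also accurate.
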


\subsection{$L^2$ Error estimates for $u$.}
{Having the result of \Cref{error_estimate_p_phi} it remains to estimate $u-u_h$.  The fundamental estimate
	is contained in the next lemma.}
\begin{lemma}\label{main_result_on_subinterval}
	If $h$ small enough, then there exists $t_h^\star \in [0,T]$ such that for all $t\in [0, t_h^\star]$ we have
	\begin{align*} 
	\|\xi_h^u\|^2_{\mathcal{T}_h}+\int_{0}^t \left(\|\xi_h^{\bm q}\|^2_{\mathcal{T}_h} +\|h_K^{-1/2}(\Pi_k^{\partial}\xi_h^u-\xi_h^{\widehat u})\|^2_{\partial\mathcal{T}_h}\right)dt  \le Ch^{2k+4}.
	\end{align*}
\end{lemma}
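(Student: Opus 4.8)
The plan is to carry out a standard energy argument on the error equation \eqref{error_equation1} combined with a continuation (bootstrap) argument to control the nonlinear term. First I would set $\Theta$ aside and simply take $(\bm r_1, w_1, \mu_1) = (\xi_h^{\bm q}, \xi_h^u, \xi_h^{\widehat u})$ in \eqref{error_equation1}. By the coercivity identity in \Cref{basis_property1}, the left-hand side becomes
\[
\tfrac12\tfrac{d}{dt}\|\xi_h^u\|_{\mathcal T_h}^2 + \|\xi_h^{\bm q}\|_{\mathcal T_h}^2 + \|h_K^{-1/2}(\Pi_k^\partial \xi_h^u - \xi_h^{\widehat u})\|_{\partial\mathcal T_h}^2,
\]
while the first two terms on the right are controlled by \Cref{HDG_elliptic_projection_approximation} (they contribute $Ch^{2k+4}$ after integrating in time). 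The genuinely delicate part is the two trilinear terms $\mathscr C(\bm p,\bm p;\xi_h^u,\xi_h^{\widehat u};\xi_h^u)$ and $\mathscr C(\bm p - \bm p_h, \bm p - \widehat{\bm p}_h; u_h, \widehat u_h; \xi_h^u)$.

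For the first trilinear term, I would integrate by parts element-wise in $(\bm p u, \nabla w)$-type expressions, use that $\nabla\cdot\bm p$ is bounded (Assumption, $M\ge\|\nabla\cdot\bm p\|_{0,\infty}$), and absorb the face contributions using the jump norm $\|h_K^{-1/2}(\Pi_k^\partial\xi_h^u - \xi_h^{\widehat u})\|_{\partial\mathcal T_h}$ together with \Cref{energy_argument1} (which bounds $\|\nabla\xi_h^u\|_{\mathcal T_h}$ and $\|h_K^{-1/2}(\xi_h^u-\xi_h^{\widehat u})\|_{\partial\mathcal T_h}$ by $\|\xi_h^{\bm q}\|_{\mathcal T_h}$ plus the same jump term). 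This should produce a term of the form $C(\|\xi_h^u\|_{\mathcal T_h}^2 + \epsilon\|\xi_h^{\bm q}\|_{\mathcal T_h}^2 + \epsilon\|h_K^{-1/2}(\Pi_k^\partial\xi_h^u-\xi_h^{\widehat u})\|_{\partial\mathcal T_h}^2)$, the $\epsilon$-terms being absorbed on the left. The $M$ constant in the elliptic projection \eqref{projection01} was built precisely so that this estimate closes — indeed $M(u-u_{Ih},w_1)$ on the right of \eqref{error_equation1} and the $M$-coercivity hidden in $\mathscr A$ (via \eqref{projection01}) combine to handle the sign-indefinite part of $\mathscr C(\bm p,\bm p;\cdot,\cdot;\cdot)$.

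For the second trilinear term, write $\bm p - \bm p_h = (\bm p - \bm\Pi_V\bm p) + \xi_h^{\bm p}$ and $\bm p - \widehat{\bm p}_h$ similarly, then bound using $\|\bm p - \bm\Pi_V\bm p\|\le Ch^{k+2}$, the estimate $\|\xi_h^{\bm p}\|_{\mathcal T_h}\le C(h^{k+2} + \|u-u_h\|_{\mathcal T_h})$ from \Cref{error_estimate_p_phi} (via \Cref{estimate_p1,lemma:step3_first_lemma}), and the triangle inequality $\|u-u_h\|\le \|u-u_{Ih}\| + \|\xi_h^u\| \le Ch^{k+2} + \|\xi_h^u\|_{\mathcal T_h}$. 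Here the factors $\|u_h\|$ and $\|\widehat u_h\|$ (or rather $\|\nabla u_h\|$ and the $u_h - \widehat u_h$ jump) must be controlled, and this is where an a priori bound on $u_h$ is needed — the quasi-uniformity hypothesis and an inverse inequality enter. This is the step that forces the restriction to a subinterval $[0,t_h^\star]$: one assumes $\|\xi_h^u(t)\|_{\mathcal T_h}\le h^{k+2}$ (say) holds on a maximal such interval, uses this to bound $\|u_h\|_{0,\infty}$ or $\|\nabla u_h\|$ via the exact solution plus an inverse estimate, closes the Gronwall inequality to get $\|\xi_h^u(t)\|_{\mathcal T_h}^2 + \int_0^t(\cdots) \le Ch^{2k+4}$ with $C$ independent of $t_h^\star$, and then (in the next lemma, not this one) bootstraps to conclude $t_h^\star = T$ for $h$ small.

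Assembling: after absorbing the $\epsilon$-terms, one is left with $\tfrac{d}{dt}\|\xi_h^u\|_{\mathcal T_h}^2 \le C\|\xi_h^u\|_{\mathcal T_h}^2 + Ch^{2k+4}$, and since $\xi_h^u(0) = u_{Ih}(0) - u_h(0) = 0$ by the choice of initial data, Gronwall's inequality yields $\|\xi_h^u(t)\|_{\mathcal T_h}^2 \le Ch^{2k+4}$ on $[0,t_h^\star]$, and integrating the differential inequality back gives the stated bound on $\int_0^t(\|\xi_h^{\bm q}\|_{\mathcal T_h}^2 + \|h_K^{-1/2}(\Pi_k^\partial\xi_h^u - \xi_h^{\widehat u})\|_{\partial\mathcal T_h}^2)\,dt$. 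The main obstacle is unquestionably the nonlinear term $\mathscr C(\bm p - \bm p_h, \bm p - \widehat{\bm p}_h; u_h, \widehat u_h; \xi_h^u)$: getting a bound that is genuinely $O(h^{2k+4})$ (rather than $O(h^{2k+2})$) requires exploiting the superconvergence $\|\xi_h^{\bm p}\| \lesssim h^{k+2} + \|u-u_h\|$ from \Cref{lemma:step3_first_lemma}, which in turn is why $\bm S_h$ was chosen with polynomial degree $k+1$ matching $V_h$ rather than $k$.
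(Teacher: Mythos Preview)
Your proposal is correct and follows essentially the same route as the paper: test the error equation \eqref{error_equation1} with $(\xi_h^{\bm q},\xi_h^u,\xi_h^{\widehat u})$, treat $\mathscr C(\bm p,\bm p;\xi_h^u,\xi_h^{\widehat u};\xi_h^u)$ by elementwise integration by parts (producing a $\|\nabla\cdot\bm p\|_{0,\infty}\|\xi_h^u\|^2$ term for Gronwall), control $\mathscr C(\bm p-\bm p_h,\bm p-\widehat{\bm p}_h;u_h,\widehat u_h;\xi_h^u)$ via \Cref{error_estimate_p_phi} together with an $L^\infty$ bound on $(u_h,\widehat u_h)$ obtained from a continuation argument and the inverse inequality, and close with Gronwall using $\xi_h^u(0)=0$. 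One small clarification: there is no ``$M$-coercivity hidden in $\mathscr A$'' here---the $M$ was introduced to make the elliptic projection \eqref{projection01} well-posed (see the Appendix), and in this lemma it only shows up as the harmless right-hand side term $M(u-u_{Ih},\xi_h^u)$; the sign-indefinite part of $\mathscr C(\bm p,\bm p;\cdot,\cdot;\cdot)$ is absorbed by Gronwall, not by $M$.
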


\begin{proof} 
	We take $(\bm r_1,w_1,\mu_1)=(\xi_h^{\bm q},\xi_h^u,\xi_h^{\widehat u})$ in \eqref{error_equation1} to get
	\begin{align}\label{energy_pro2}
	\begin{split}
	\hspace{1em}&\hspace{-1em} (\partial_t\xi_h^u,\xi_h^u)_{\mathcal{T}_h}+
	\|\xi_h^{\bm q}\|^2_{\mathcal{T}_h}+\|h_K^{-1/2}(\Pi_k^{\partial}\xi_h^u-\xi_h^{\widehat u})\|^2_{\partial\mathcal{T}_h}\\
	&=(\partial_t(u_{Ih}-u),\xi^u_h)_{\mathcal{T}_h}+M(u-u_{Ih},\xi^u_h)_{\mathcal{T}_h}\\
	&\quad -( (\bm p-\bm p_h) u_h, \nabla \xi^u_h)_{\mathcal{T}_h} + \langle (\bm p - \widehat{\bm p}_h)\cdot \bm n \widehat u_h, \xi^u_h - \xi_h^{\widehat u} \rangle_{\partial\mathcal{T}_h}\\
	&\quad -( \bm p \xi^u_h, \nabla \xi^u_h)_{\mathcal{T}_h} + \langle \bm p \cdot \bm n   \xi_h^{\widehat u}, \xi^u_h \rangle_{\partial\mathcal{T}_h}\\
	&=:R_1+R_2+R_3+R_{4} + R_{5} +R_{6}.
	\end{split}
	\end{align}
	We note that $\xi_h^u(0) = u_h(0) - u_{Ih}(0) = 0$. Let $t=0$ in \eqref{energy_pro2} to get 
	\begin{align*}
	\|\xi_h^{\bm q}(0)\|^2_{\mathcal{T}_h}+\|h_K^{-1/2}(\Pi_k^{\partial}\xi_h^u(0)-\xi_h^{\widehat u}(0))\|^2_{\partial\mathcal{T}_h} = 0.
	\end{align*}
	This implies $\xi_h^{\widehat u}(0) = \xi_h^u(0)=0$. Hence we have $\widehat u_h(0) = \widehat u_{Ih}(0)$. By \Cref{HDG_elliptic_projection_approximation} {we have} 
	\begin{align*}
	\| \Pi_{k+1}^o u(0) -u_h(0)\|_{\mathcal T_h} &=  \|\Pi_{k+1}^o u(0) -  u_{Ih}(0)\|_{\mathcal T_h} \le Ch^{k+2},\\
	\|\Pi_k^\partial u(0) - \widehat u_h(0)\|_{\partial \mathcal T_h}& = \|\Pi_k^\partial u(0) - \widehat u_{Ih}(0)\|_{\partial \mathcal T_h} \le Ch^{k+3/2}.
	\end{align*}
	For $h$ small enough {these estimates imply that} 
	\begin{align}
	\|u(t) - \Pi_{k+1}^o u(t)\|_{L^\infty(\Omega)}\le 1/2 \textup{ and } 	\|u(t) - \Pi_{k}^\partial  u(t)\|_{L^\infty(\mathcal E_h)} \le 1/2 \textup{ for all  } t\in [0,T]. 
	\end{align}
	Let $\mathcal M = \max_{(t,x)\in [0,T]\times \Omega} |u(t,x)|$, then the inverse inequality gives
	\begin{align*}
	\|  u_h(0)\|_{L^\infty(\Omega)} &\le Ch^{-d/2}\| \Pi_{k+1}^o u(0) -u_h(0)\|_{\mathcal T_h}  \\
	&\quad + \| \Pi_{k+1}^o u(0) - u(0)\|_{L^\infty (\Omega)} +  \|u(0)\|_{L^\infty (\Omega)}\\
	&\le Ch^{k+2-d/2} + \mathcal M + 1/2,\\
	\| \widehat u_h(0)\|_{L^\infty(\mathcal E_h)} &\le Ch^{{1/2-d}/2}\| \Pi_k^\partial u(0) -\widehat u_h(0)\|_{\mathcal T_h}  \\
	&\quad + \| \Pi_k^\partial  u(0) - u(0)\|_{L^\infty (\mathcal E_h)} +  \|  u(0)\|_{L^\infty (\mathcal E_h)}\\
	&\le Ch^{k+2-d/2} + \mathcal M + 1/2.
	\end{align*}
	Also, since the error equation \eqref{error_equation1} is continuous with respect to the time $t$, then  there exists $t_h^\star\in [0,T]$ such that for $h$ small enough,
	\begin{align}\label{bounded}
	\|u_h\|_{L^\infty(\Omega)} +	\|\widehat u_h\|_{L^\infty(\mathcal E_h)} \le 2 \mathcal M+2. 
	\end{align}
	By the  Cauchy-Schwarz inequality,  \Cref{HDG_elliptic_projection_approximation,energy_argument1} we get
	\begin{align*}
	R_1 +R_2 &\le Ch^{k+2}\|\xi_h^u\|_{\mathcal{T}_h}\\
	&\le Ch^{2k+4}+\frac{1}{8}\left(\|\xi_h^{\bm q}\|^2_{\mathcal{T}_h}
	+\|h_K^{-1/2}(\Pi_k^{\partial}\xi_h^u-\xi_h^{\widehat u})\|^2_{\partial\mathcal{T}_h}
	\right).
	\end{align*}
	For the term $R_3$, by the Cauchy-Schwarz,  \Cref{error_estimate_p_phi},  \Cref{HDG_Poincare} and \Cref{energy_argument1} we get 
	\begin{align*}
	R_3&\le C\|\bm p-\bm p_h\|_{\mathcal{T}_h}\|\nabla   \xi^u_h\|_{\mathcal{T}_h}\\
	&\le C\|\bm p-\bm p_h\|_{\mathcal{T}_h}^2 + \frac 1 C\|\nabla   \xi^u_h\|_{\mathcal{T}_h}^2\\
	&\le Ch^{2k+4} + C\| u - u_h\|_{\mathcal{T}_h}^2 + \frac 1 C\|\nabla   \xi^u_h\|_{\mathcal{T}_h}^2\\
	&\le Ch^{2k+4} +C \|\xi_h^u\|_{\mathcal T_h}^2+\frac{1}{8}\left(\|\xi_h^{\bm q}\|^2_{\mathcal{T}_h}+\|h_K^{-1/2}(\Pi_k^{\partial}\xi_h^u-\xi_h^{\widehat u})\|^2_{\partial\mathcal{T}_h}\right).
	\end{align*}
	Also, applying \Cref{energy_argument1} again to obtain
	\begin{align*}
	R_4&=\langle (\bm p-\widehat{\bm p}_h) \cdot\bm n \widehat u_h, \xi^u_h-\xi^{\widehat u}_h \rangle_{\partial\mathcal{T}_h}\\
	&\le C \|h_K^{1/2}(\bm p-\widehat{\bm p}_h)\|_{\partial\mathcal{T}_h}
	\|h_K^{-1/2}(\xi_h^u- \xi^{\widehat u}_h)\|_{\partial\mathcal{T}_h}\\
	&\le Ch^{2k+4}+C \|\xi_h^u\|_{\mathcal T_h}^2+\frac{1}{8}\left(\|\xi_h^{\bm q}\|^2_{\mathcal{T}_h}+\|h_K^{-1/2}(\Pi_k^{\partial}\xi_h^u-\xi_h^{\widehat u})\|^2_{\partial\mathcal{T}_h}\right).
	\end{align*}
	For the last two terms $R_5+R_6$,  integration by parts to get 
	\begin{align*}
	R_5 + R_6 &= -( \bm p \xi^u_h, \nabla \xi^u_h)_{\mathcal{T}_h} + \langle \bm p \cdot \bm n   \xi_h^{\widehat u}, \xi^u_h \rangle_{\partial\mathcal{T}_h}\\
	& = -\frac 1 2 \langle \bm p\cdot \bm n (\xi^u_h - \xi_h^{\widehat u}), \xi^u_h - \xi_h^{\widehat u} \rangle_{\mathcal T_h} -( \nabla\cdot \bm p \xi^u_h,  \xi^u_h)_{\mathcal{T}_h} \\
	&\le \frac 1 8 \|h_K^{-1/2}(\Pi_k^{\partial}\xi_h^u-\xi_h^{\widehat u})\|^2_{\partial\mathcal{T}_h} + \|\nabla\cdot\bm p\|_{L^\infty(\Omega)} \| \xi^u_h\|_{\mathcal T_h}^2.
	\end{align*}
	Sum the above estimates of $\{R_i\}_{i=1}^6$ to get 
	\begin{align}\label{important_result}
	(\partial_t\xi_h^u,\xi_h^u)_{\mathcal{T}_h}+
	\|\xi_h^{\bm q}\|^2_{\mathcal{T}_h}+\|h_K^{-1/2}(\Pi_k^{\partial}\xi_h^u-\xi_h^{\widehat u})\|^2_{\partial\mathcal{T}_h} \le C h^{2k+4} + C \| \xi^u_h\|_{\mathcal T_h}^2.
	\end{align}
	Integrating both sides of  \eqref{important_result} on $[0, t_h^*]$ we finally obtain
	\begin{align*}
	\hspace{3em}&\hspace{-3em} \|\xi_h^u(t_h^*)\|^2_{\mathcal{T}_h}+\int_{0}^{t_h^*} \left(\|\xi_h^{\bm q}\|^2_{\mathcal{T}_h} +\|h_K^{-1/2}(\Pi_k^{\partial}\xi_h^u-\xi_h^{\widehat u})\|^2_{\partial\mathcal{T}_h}\right) dt\\
	&\le Ch^{2k+4} + C \int_0^{t_h^*} \|\xi_h^u\|_{\mathcal T_h}^2 dt.
	\end{align*}
	The use of Gronwall's inequality gives the desired result.
\end{proof}

\begin{lemma}\label{theorem_err_u3_extend} 
	For $h$ small enough, the result in  \Cref{main_result_on_subinterval}  {holds} on the whole time interval $[0,T]$.
\end{lemma}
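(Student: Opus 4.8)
The plan is to run a standard continuation (bootstrap) argument that upgrades the local-in-time estimate of \Cref{main_result_on_subinterval} to a global one. First I would fix the meaning of $t_h^\star$: let it be the supremum of all $t\in[0,T]$ for which the a priori $L^\infty$ bound \eqref{bounded} holds on $[0,t]$. The computation at $t=0$ carried out in the proof of \Cref{main_result_on_subinterval} shows that \eqref{bounded} in fact holds there with \emph{strict} inequality once $h$ is small, so $t_h^\star>0$; and since $t\mapsto\|u_h(t)\|_{L^\infty(\Omega)}+\|\widehat u_h(t)\|_{L^\infty(\mathcal E_h)}$ is continuous, \eqref{bounded} holds on the closed interval $[0,t_h^\star]$. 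On $[0,t_h^\star]$ every hypothesis used in \Cref{main_result_on_subinterval} is available, so that lemma gives $\|\xi_h^u\|_{\mathcal T_h}^2\le Ch^{2k+4}$, hence $\|u-u_h\|_{\mathcal T_h}\le Ch^{k+2}$ there by the triangle inequality together with \Cref{HDG_elliptic_projection_approximation} and \eqref{eq27a}. The objective is to conclude that $t_h^\star=T$.

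Next I would argue by contradiction: suppose $t_h^\star<T$. On $[0,t_h^\star]$ the bound $\|\xi_h^u\|_{\mathcal T_h}\le Ch^{k+2}$, combined with \Cref{HDG_elliptic_projection_approximation}, \Cref{energy_argument1} and the projection/trace estimates \eqref{eq27a}--\eqref{eq27b}, gives $\|u_h-\Pi_{k+1}^o u\|_{\mathcal T_h}\le Ch^{k+2}$ and $\|\widehat u_h-\Pi_k^\partial u\|_{\partial\mathcal T_h}\le Ch^{k+3/2}$. Inserting these into the inverse inequality exactly as in the derivation of \eqref{bounded} yields, for every $t\in[0,t_h^\star]$,
\[
\|u_h\|_{L^\infty(\Omega)}+\|\widehat u_h\|_{L^\infty(\mathcal E_h)}\le C h^{k+2-d/2}+2\mathcal M+1 .
\]
Because $d\in\{2,3\}$ and $k\ge0$ we have $k+2-d/2\ge\tfrac12>0$, so for $h$ small enough the right-hand side is strictly less than $2\mathcal M+2$. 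Thus \eqref{bounded} holds strictly on $[0,t_h^\star]$, and by continuity of the same map in $t$ it continues to hold (still strictly) on $[0,t_h^\star+\delta]$ for some $\delta>0$ with $t_h^\star+\delta\le T$. This contradicts the maximality of $t_h^\star$; hence $t_h^\star=T$, and the estimate of \Cref{main_result_on_subinterval} is valid on all of $[0,T]$.

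The only genuinely delicate point is the one isolated above: checking that the exponent $k+2-d/2$ produced by the inverse inequality is strictly positive, so that the sharpened $L^\infty$ estimate actually beats the crude bound $2\mathcal M+2$ and the continuation can proceed. This is where quasi-uniformity of the mesh (for the inverse inequality) and the dimension restriction $d\le 3$ are used; every other ingredient is a direct re-use of estimates already proved, so I do not expect further obstacles.
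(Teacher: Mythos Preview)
Your proposal is correct and uses essentially the same bootstrap/continuation argument as the paper: both feed the $O(h^{k+2})$ estimate from \Cref{main_result_on_subinterval} through the inverse inequality to sharpen the a~priori bound \eqref{bounded} to a strict inequality, and then contradict the maximality of $t_h^\star$. The only cosmetic difference is that the paper organizes the contradiction through the set $\mathbb A=\{h:t_h^\star\ne T\}$ of mesh sizes, while you run the standard continuation in $t$ for a fixed $h$; the content is identical.
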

\begin{proof}
	Fix $ h^* > 0 $ so that \Cref{main_result_on_subinterval} is  true for all $ h \leq h^* $, and assume $t^*_h$ is the largest value for which \eqref{bounded}  is true for all $ h \leq h^* $.  Define the set $ \mathbb {A} = \{ h \in [0,h^*] : t^*_h \neq T \} $.  If the result is not true, then $ \mathbb {A} $ is nonempty, $ \inf \{ h : h\in \mathbb {A} \} = 0 $, and also
	\begin{align}\label{assum}
	\|u_h\|_{L^\infty(\Omega)}+ \|\widehat u_h\|_{L^\infty(\mathcal E_h)} = 2\mathcal M+2 \quad  \mbox{for all $ h \in \mathcal{A} $.}
	\end{align}
	However, by the inverse inequality and since \Cref{main_result_on_subinterval} {holds}, we have
	\begin{align*}
	\|u_h\|_{L^\infty(\Omega)}+ \|\widehat u_h\|_{L^\infty(\mathcal E_h)} \le  C h^{2-d/2} + 2\mathcal M +1 \quad  \mbox{for all $ h \in \mathcal{A} $.}
	\end{align*}
	Since $ C $ does not depend on $ h $, there exists $ h^*_1 \leq h^* $ such that $ 		\|u_h\|_{L^\infty(\Omega)}+ \|\widehat u_h\|_{L^\infty(\mathcal E_h)} <2\mathcal M +2 $ for all $ h \in \mathbb{A} $ such that $ h \leq h^*_1 $.  This contradicts \eqref{assum}, and therefore $t^*_h = T$ for all $ h $ small enough.
\end{proof}

The above lemma, the triangle inequality, and \Cref{estimate_p1} complete the proof of \Cref{main_theorem}.

\section{Numerical Results}
\label{Numerical_Results}
In this section we present some numerical results in two spatial dimensions.
\begin{example}\label{ex1}
	We begin with an example with an exact solution in order to illustrate the convergence theory.  The domain is the unit square $\Omega = [0,1]\times [0,1]\subset \mathbb R^2$ and {homogeneous} Dirichlet boundary conditions are applied on the boundary. The source terms $f_1$, $f_2$ and the initial condition are chosen so that $\varepsilon = 0.1$ and  the exact solution $u = \cos(t)\sin(x)\cos(y)$ and $\phi = \sin(t)\cos(x)\sin(y)$.  The second order backward differentiation formula (BDF2) is  applied for the time discretization  and for the space discretization we choose polynomial degrees $ k = 0 $ or $ k = 1 $ {(used in the definition of the discrete spaces in Section~\ref{intro}).}.  The time step is chosen to be $\Delta t = h$ when $k=0$ and $\Delta t   = h^{3/2}$ when $k=1$.  We report the errors at the final time $ T = 1 $.  The observed convergence rates match our theory.
	\begin{table}
		\small
		\caption{{History of convergence for $\bm{q}_h$ and $u_h$  for Example \ref{ex1} under uniform mesh refinement.}}\label{table_1}
		\centering
		\begin{tabular}{c|c|c|c|c|c}
			\Xhline{1pt}
			\multirow{2}{*}{Degree}
			&\multirow{2}{*}{$\frac{h}{\sqrt{2}}$}	
			&\multicolumn{2}{c|}{$\|\bm{q}-\bm{q}_h\|_{0,\Omega}$}	
			&\multicolumn{2}{c}	{$\|\bm p-\bm p_h\|_{0,\Omega}$}		\\
			\cline{3-6}
			& &Error &Rate
			&Error &Rate
			\\
			
			\cline{1-6}
			\multirow{5}{*}{ $k=0$}
			&$2^{-1}$	&4.2730E-02 	&	    &6.4843E-03&	   \\
			&$2^{-2}$	&2.2386E-02	&0.93	& 1.9113E-03 	&1.76\\
			&$2^{-3}$	& 1.1265E-02	&0.99	&5.1822E-04 	&1.88 \\
			&$2^{-4}$	&5.6455E-03	&1.00	&1.3592E-04	&1.93 \\
			&$2^{-5}$	&2.8248E-03&1.00 	&3.4881E-05	&1.96 \\

			\cline{1-6}
			\multirow{5}{*}{ $k=1$}
			&$2^{-1}$	&2.9547E-03	&	    & 3.8888E-04 &	   \\
			&$2^{-2}$	&7.5335E-04&1.97	& 5.4882E-05	&2.82\\
			&$2^{-3}$	& 1.9796E-04	&1.93	&7.5341E-06	&2.86 	\\
			&$2^{-4}$	&5.0451E-05 &1.97	& 9.8858E-07	&2.93 \\
			&$2^{-5}$	&1.2748E-05&1.98 	&1.2705E-07&2.96 \\

			\Xhline{1pt}

		\end{tabular}
		
	\end{table}

	\begin{table}
		\small
		\caption{{History of convergence for  $\bm p_h$ and  $\phi_h$ for Example \ref{ex1} under uniform mesh refinement.}}\label{table_2}
		\centering
		\begin{tabular}{c|c|c|c|c|c}
			\Xhline{1pt}
			\multirow{2}{*}{Degree}
			&\multirow{2}{*}{$\frac{h}{\sqrt{2}}$}		
			&\multicolumn{2}{c|}{$\|u-u_h\|_{0,\Omega}$}
			&\multicolumn{2}{c}{$\|\phi-\phi_h\|_{0,\Omega}$}	\\
			\cline{3-6}
			& &Error &Rate
			&Error &Rate
			
			\\
			
			\cline{1-6}
			\multirow{5}{*}{ $k=0$}
			&$2^{-1}$	    &1.8339E-02 & & 1.0205E-02&\\
			&$2^{-2}$		&4.9503E-03	&1.89&2.3408E-03 	&2.12\\
			&$2^{-3}$	&1.2423E-03 	&2.00 &4.9774E-04 	&2.23\\
			&$2^{-4}$	&3.1156E-04 	&2.00&1.1131E-04 	&2.16\\
			&$2^{-5}$	 	& 7.7965E-05	&2.00&2.6001E-05	&2.09\\

			\cline{1-6}
			\multirow{5}{*}{ $k=1$}
			&$2^{-1}$    &1.8339E-02 & & 4.0894e-04&\\
			&$2^{-2}$	&2.3140E-04	&2.98&3.7700E-05	&3.43\\
			&$2^{-3}$	& 2.9565E-05 	&2.97 &4.6167E-06	&3.03\\
			&$2^{-4}$	& 3.7026E-06	&3.00&5.3872E-07	&3.01\\
			&$2^{-5}$	& 4.6363E-07	&3.00&6.6418E-08	&3.02\\

			\Xhline{1pt}

		\end{tabular}
		
	\end{table}
\end{example}

Next, we test an example without a convergence rate but that show the performance of the HDG method.  We take $k=0$, the domain is also the unit square $\Omega = [0,1]\times [0,1]\subset \mathbb R^2$ and partition into $20000$ triangles, i.e., $h=\sqrt{2}/100$. BDF2 is applied for time discretization and the time step $\Delta t = 1/1000$.

\begin{example} \label{ex2}{This example has non-homogeneous Dirichlet data and demonstrates that our HDG scheme 
		can handle this case.} We take $\varepsilon =10^{-2}$ and the source terms $f_1=0$ and 
	\begin{align*}
	f_2 =
	\begin{cases}
	-0.8&  \text{$(0,0.5)\times (1/2,1)$},\\
	0.8& \text{else}.
	\end{cases}
	\end{align*}
	The Dirichlet boundary condition $g_{u}=0.9, g_{\phi}=1.1$ on $\{y=0\}$, and  $g_{u}=0.1, g_{\phi}=-1.1$ on $\{y=1, 0\le x\le 0.25\}$. Elsewhere we impose homogeneous Neumann boundary conditions. Initial condition $u_0 = (1+f_2)/2$.  A similar example was studied in \cite{Bessemoulin_FVM_2014_SINUM} by a finite volume method.  We plot the solutions $u_h$ and $\phi_h$ at different final time $T$; see \Cref{uh,phih2}.
	\begin{figure}
		\centerline{
			\hbox{\includegraphics[height=2in]{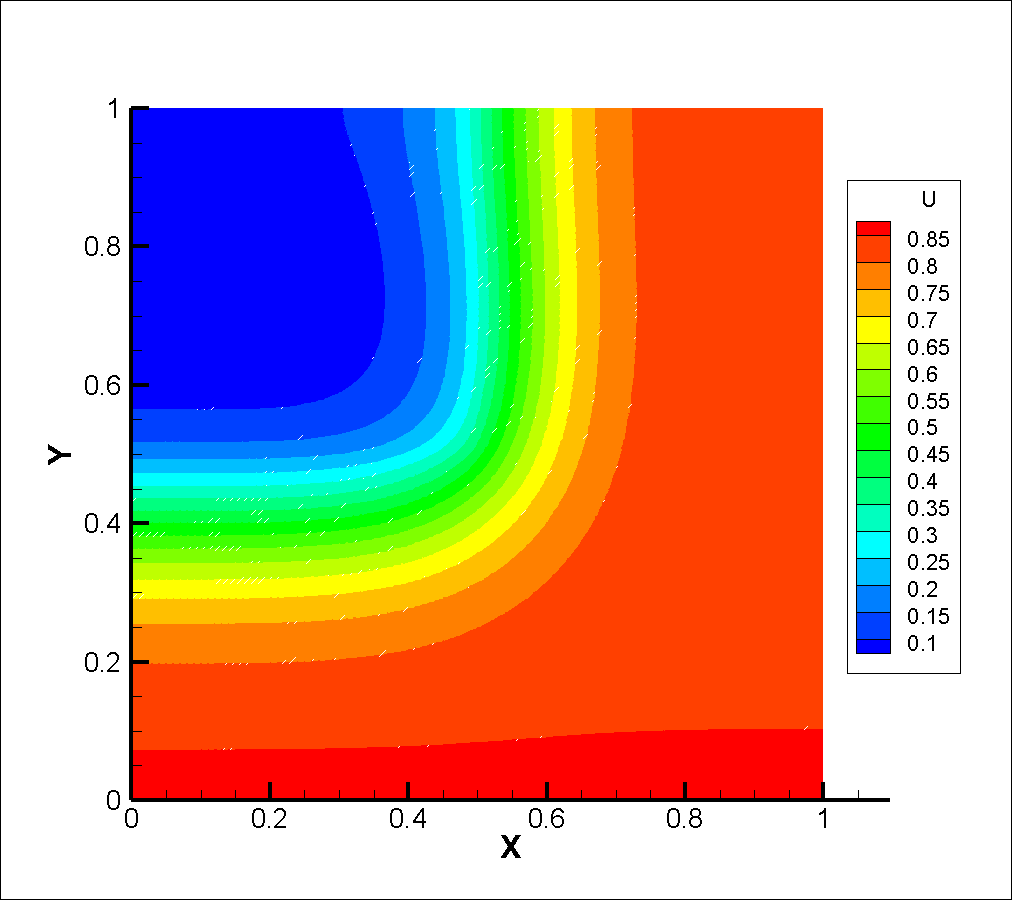}}
			\hbox{\includegraphics[height=2in]{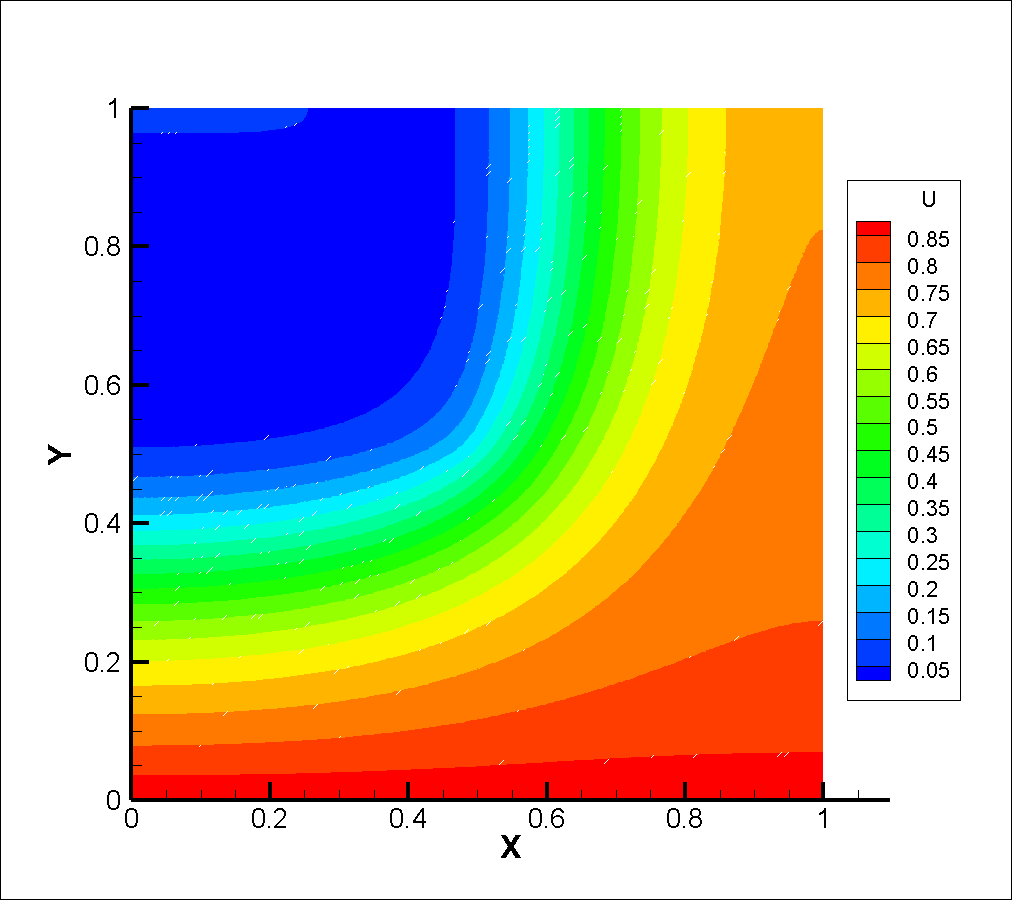}}}
		\centerline{
			\hbox{\includegraphics[height=2in]{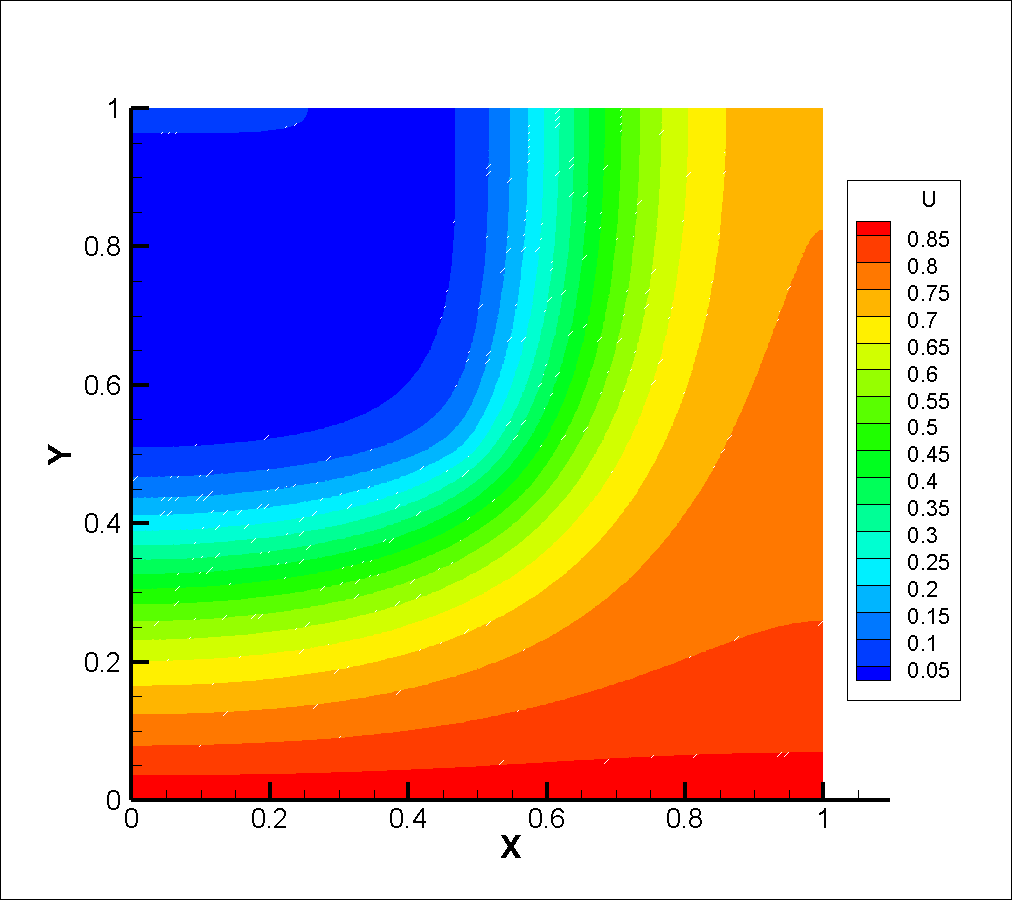}}
			\hbox{\includegraphics[height=2in]{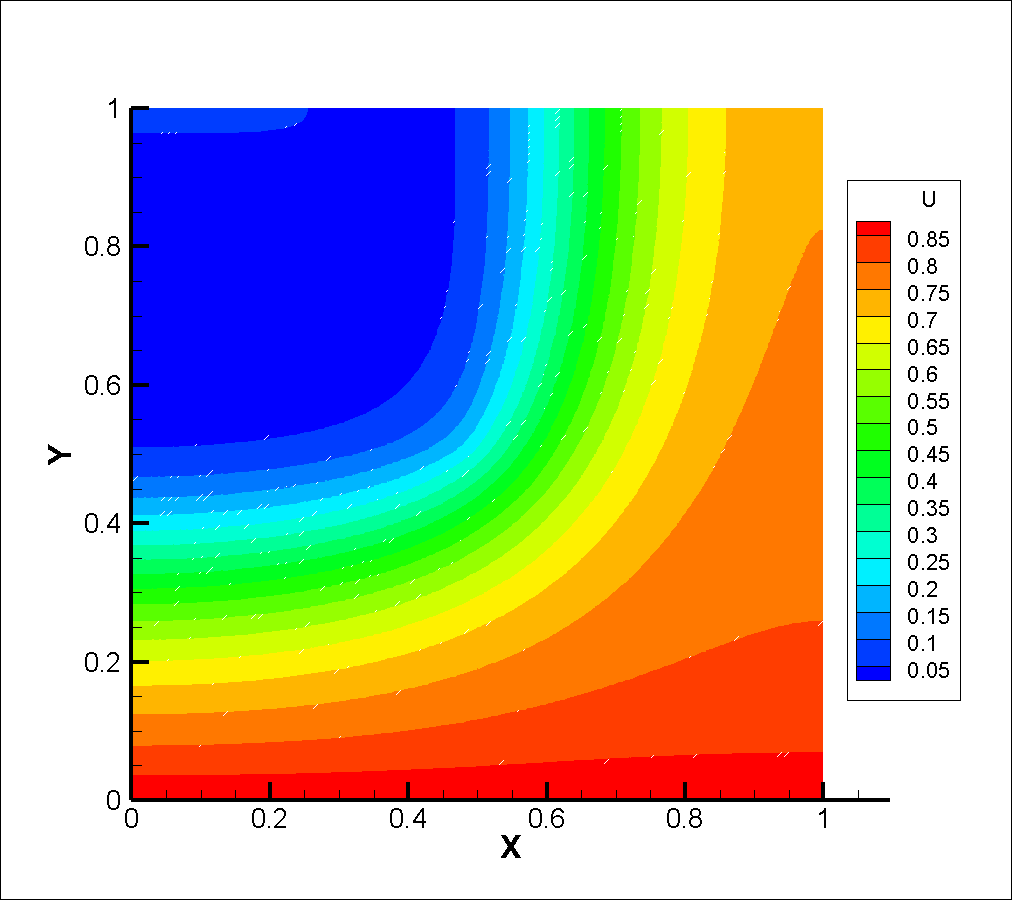}}}
		\caption{From left to right, from top to bottom are the contour plots of $u_h$ at  time: $T=0.01, 0.4,0.7,1$ for Example~\ref{ex2}.}
		\label{uh}
		\centering
	\end{figure}
	\begin{figure}
		\centerline{
			\hbox{\includegraphics[height=2in]{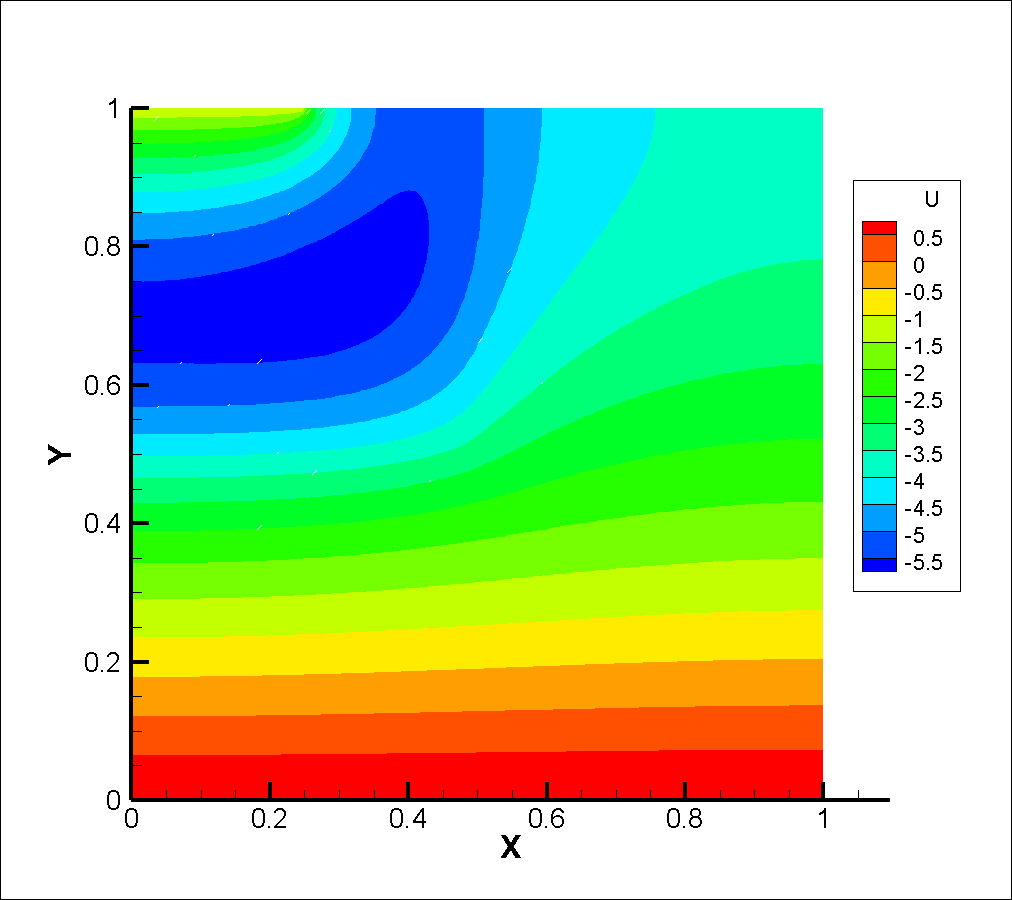}}
			\hbox{\includegraphics[height=2in]{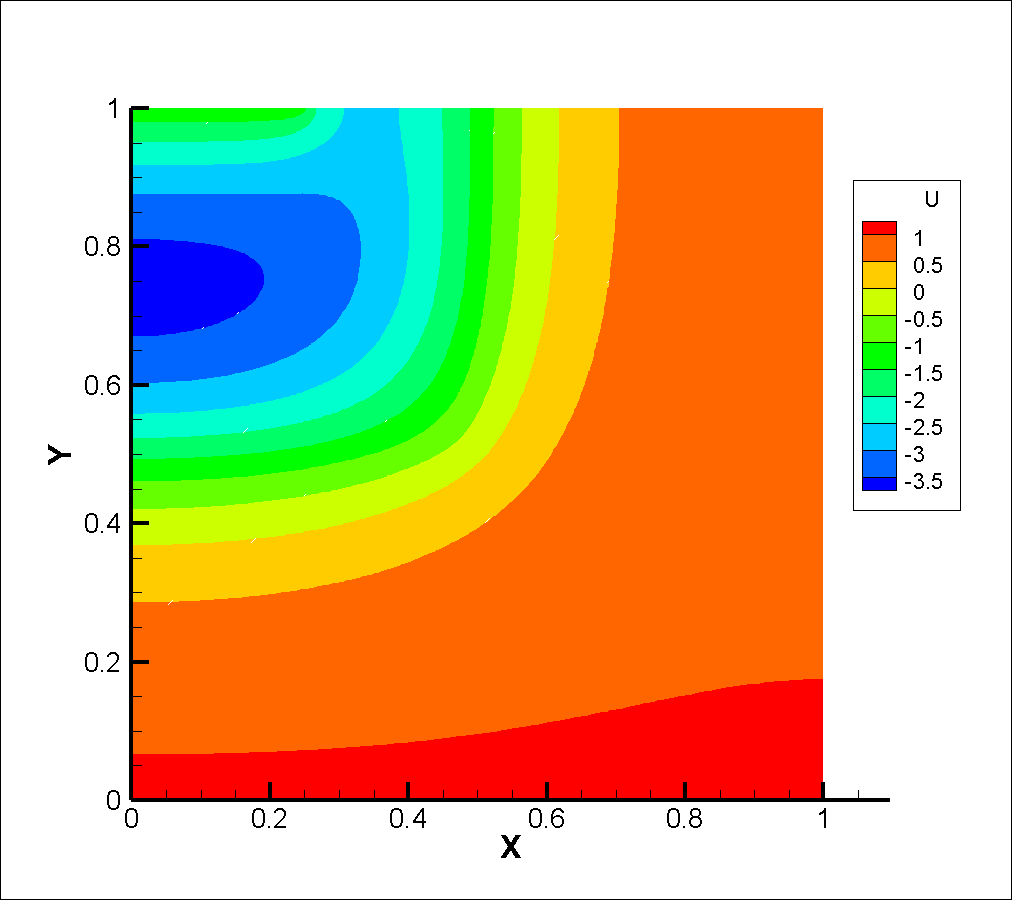}}}
		\centerline{
			\hbox{\includegraphics[height=2in]{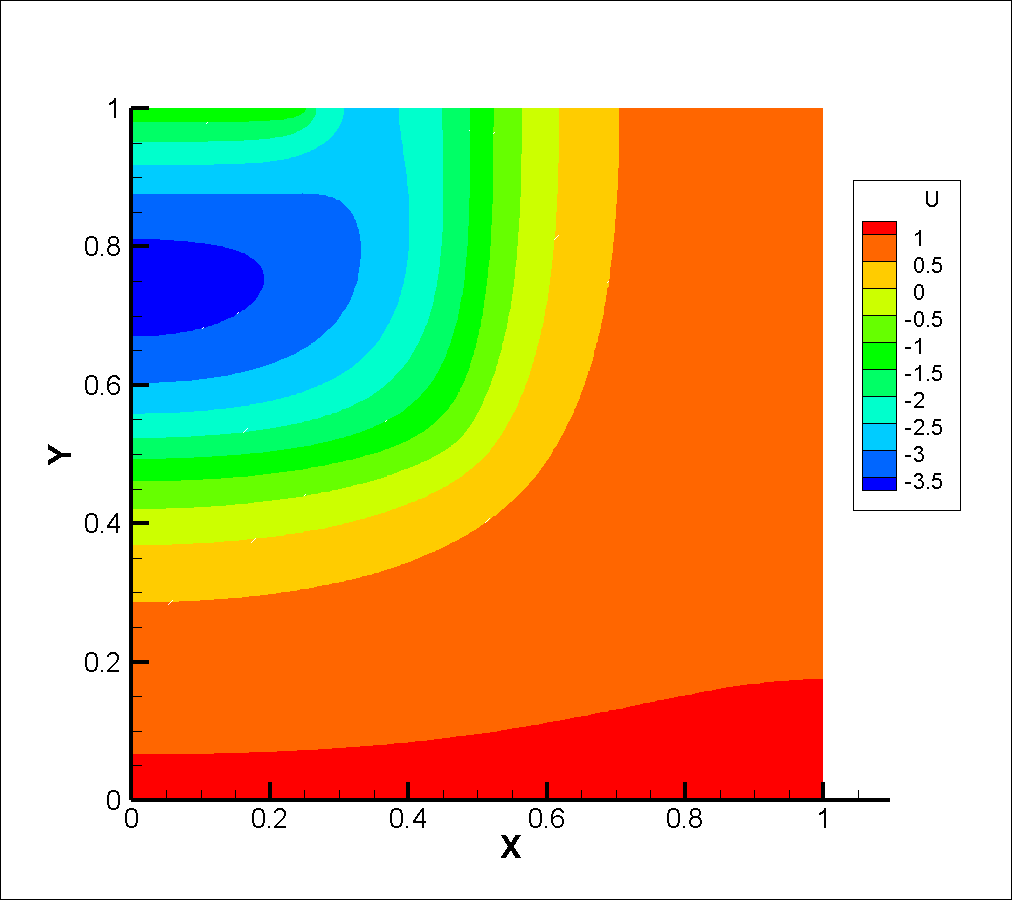}}
			\hbox{\includegraphics[height=2in]{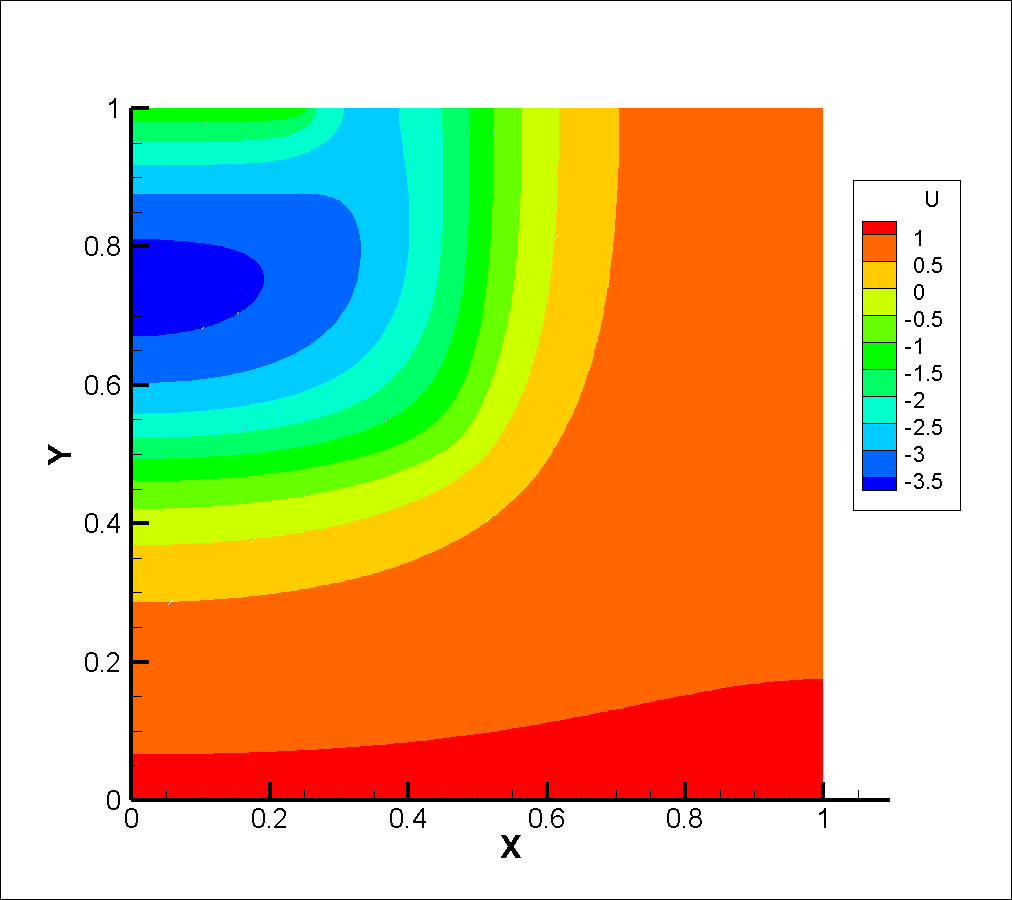}}}
		\caption{From left to right, from top to bottom are the contour plots of  $\phi_h$ at  time: $T=0.01, 0.4,0.7,1$ for Example~\ref{ex2}.}
		\label{phih2}
		\centering
	\end{figure}
\end{example}

\section{Conclusion}
In this work, we proposed an HDG method for the drift-diffusion equation.  We proved  optimal 
{semi-discrete} error estimates  for all variables;  moreover, from the point view of degrees of freedom, we obtained a superconvergent convergence rate for the variable $u$. {As far as we are aware, this is the first such result in the literature.}

{Clearly it would be desirable to prove convergence without the need to assume an inverse assumption.  Equally, it would be useful to prove fully discrete estimates using, for example BDF2 in time. }

This is the first of a series of papers in which we develop efficient HDG methods for drift-diffusion equation, including devising HDG methods when $\varepsilon$ approaches to zero. We have a great interest in the numerical solution of steady state drift-diffusion equation, and we will explore this problem in our future papers.

\section*{Acknowledgements}
G.\ Chen is supported by National natural science Foundation of China (NSFC) under grant number 11801063 and China Postdoctoral Science Foundation under grant number 2018M633339.
The research of  P.B.  Monk and Y. Zhang is partially supported by  the US
National Science Foundation (NSF) under grant number DMS-1619904.
\appendix
\section{Appendix}
\label{Appendix}

In this section we  give a proof for {
	\eqref{HDG_elliptic_projection_approximation1} and \eqref{HDG_elliptic_projection_approximation2}.  The proof of (\ref{HDG_elliptic_projection_approximation3}) is similar and we do not provide details. }

\subsection{Error equations}
{We start be deriving equations satisfied by standard projections (see (\ref{L2_projection})) of the exact solution.}
\begin{lemma}\label{projetion_error}
	Let $(\bm q, u)$ be {components of} the solution of \eqref{Drift_Diffusion_Main_Equation_Mixed_Weak_Form}, then we have 
	\begin{align*}
	\hspace{0.1em}&\hspace{-0.1em} M(\Pi_{k+1}^ou, w_1)_{\mathcal{T}_h}
	+\mathscr A(\bm{\Pi}_k^o\bm q, \Pi_{k+1}^ou,\Pi_k^{\partial}u;\bm r_1,w_1,\mu_1)+\mathscr C(\bm p,\bm p; \Pi_{k+1}^ou,\Pi_k^{\partial}u; w_1)\\
	&=(Mu - u_t, w_1)_{\mathcal{T}_h}+ \langle (\bm{\Pi}^o_{k}\bm q-\bm q)\cdot \bm n,w_1 - \mu_1
	\rangle_{\partial\mathcal{T}_h} + ( \bm p (\Pi_{k+1}^ou - u), \nabla w_1)_{\mathcal{T}_h}\\
	&\quad- \langle  \bm p \cdot\bm n (\Pi_k^{\partial}u -u), w_1 - \mu_1 \rangle_{\partial\mathcal{T}_h} +\langle h_K^{-1}(\Pi_{k+1}^o u-u),\Pi_{k}^{\partial}w_1-\mu_1\rangle_{\partial\mathcal{T}_h}.
	\end{align*}
	holds for all $(\bm r_1, w_1, \mu_1)\in \bm Q_h\times V_h\times\widehat V_{h}(0)$.
\end{lemma}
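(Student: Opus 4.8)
\emph{Proof plan.} The plan is to expand the left-hand side using the definitions \eqref{def_A} of $\mathscr A$ and \eqref{def_C} of $\mathscr C$, to simplify every term with the defining orthogonality of the $L^2$ projections $\bm\Pi_k^o$, $\Pi_{k+1}^o$, $\Pi_k^\partial$, and finally to substitute the variational identities satisfied by the exact solution $(\bm q,u,\bm p)$ of \eqref{Drift_Diffusion_Main_Equation_Mixed_Weak_Form}.

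First I would record those exact identities on the mesh. Testing $\bm q+\nabla u=0$ against $\bm r_1\in\bm Q_h$ and integrating by parts elementwise gives $(\bm q,\bm r_1)_{\mathcal T_h}-(u,\nabla\cdot\bm r_1)_{\mathcal T_h}+\langle u,\bm r_1\cdot\bm n\rangle_{\partial\mathcal T_h}=0$. Testing $u_t+\nabla\cdot\bm q-\nabla\cdot(\bm p u)=0$ against $w_1\in V_h$ and integrating the convection term by parts gives $(u_t,w_1)_{\mathcal T_h}+(\nabla\cdot\bm q,w_1)_{\mathcal T_h}+(\bm p u,\nabla w_1)_{\mathcal T_h}-\langle\bm p\cdot\bm n\,u,w_1\rangle_{\partial\mathcal T_h}=0$. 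Since $\bm q\cdot\bm n$, $\bm p\cdot\bm n\,u$ and $\bm p\cdot\bm n\,\Pi_k^\partial u$ are single valued on interior faces while $\mu_1$ vanishes on $\partial\Omega$, we also have $\langle\bm q\cdot\bm n,\mu_1\rangle_{\partial\mathcal T_h}=0$ and hence $\langle\bm p\cdot\bm n(\Pi_k^\partial u-u),\mu_1\rangle_{\partial\mathcal T_h}=0$.

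Next I would expand $\mathscr A(\bm\Pi_k^o\bm q,\Pi_{k+1}^o u,\Pi_k^\partial u;\bm r_1,w_1,\mu_1)$. Because $\nabla\cdot\bm r_1$, $\bm r_1\cdot\bm n$ and $\nabla w_1$ lie in the ranges of $\Pi_{k+1}^o$, $\Pi_k^\partial$ and $\bm\Pi_k^o$ respectively, one gets $(\bm\Pi_k^o\bm q,\bm r_1)_{\mathcal T_h}=(\bm q,\bm r_1)_{\mathcal T_h}$, $(\Pi_{k+1}^o u,\nabla\cdot\bm r_1)_{\mathcal T_h}=(u,\nabla\cdot\bm r_1)_{\mathcal T_h}$, $\langle\Pi_k^\partial u,\bm r_1\cdot\bm n\rangle_{\partial\mathcal T_h}=\langle u,\bm r_1\cdot\bm n\rangle_{\partial\mathcal T_h}$; an elementwise integration by parts together with $(\bm\Pi_k^o\bm q-\bm q,\nabla w_1)_{\mathcal T_h}=0$ gives $(\nabla\cdot\bm\Pi_k^o\bm q,w_1)_{\mathcal T_h}=(\nabla\cdot\bm q,w_1)_{\mathcal T_h}+\langle(\bm\Pi_k^o\bm q-\bm q)\cdot\bm n,w_1\rangle_{\partial\mathcal T_h}$; and $\langle\bm\Pi_k^o\bm q\cdot\bm n,\mu_1\rangle_{\partial\mathcal T_h}=\langle(\bm\Pi_k^o\bm q-\bm q)\cdot\bm n,\mu_1\rangle_{\partial\mathcal T_h}$ by the first single-valuedness identity. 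For the stabilization term I would use $\Pi_k^\partial\Pi_{k+1}^o u-\Pi_k^\partial u=\Pi_k^\partial(\Pi_{k+1}^o u-u)$ and the fact that $\Pi_k^\partial w_1-\mu_1\in\mathcal P_k(e)$ on each face, so that the outer projection may be removed, yielding $\langle h_K^{-1}(\Pi_{k+1}^o u-u),\Pi_k^\partial w_1-\mu_1\rangle_{\partial\mathcal T_h}$. Similarly I would split $\mathscr C(\bm p,\bm p;\Pi_{k+1}^o u,\Pi_k^\partial u;w_1)$ as its exact part $(\bm p u,\nabla w_1)_{\mathcal T_h}-\langle\bm p\cdot\bm n\,u,w_1\rangle_{\partial\mathcal T_h}$ plus $(\bm p(\Pi_{k+1}^o u-u),\nabla w_1)_{\mathcal T_h}-\langle\bm p\cdot\bm n(\Pi_k^\partial u-u),w_1\rangle_{\partial\mathcal T_h}$, and use $(\Pi_{k+1}^o u-u,w_1)_{\mathcal T_h}=0$ to rewrite $M(\Pi_{k+1}^o u,w_1)_{\mathcal T_h}=M(u,w_1)_{\mathcal T_h}$.

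Collecting the pieces, the combination $(\bm q,\bm r_1)_{\mathcal T_h}-(u,\nabla\cdot\bm r_1)_{\mathcal T_h}+\langle u,\bm r_1\cdot\bm n\rangle_{\partial\mathcal T_h}$ drops by the first exact identity, the combination $(\nabla\cdot\bm q,w_1)_{\mathcal T_h}+(\bm p u,\nabla w_1)_{\mathcal T_h}-\langle\bm p\cdot\bm n\,u,w_1\rangle_{\partial\mathcal T_h}$ equals $-(u_t,w_1)_{\mathcal T_h}$ by the second, and $-\langle\bm p\cdot\bm n(\Pi_k^\partial u-u),w_1\rangle_{\partial\mathcal T_h}=-\langle\bm p\cdot\bm n(\Pi_k^\partial u-u),w_1-\mu_1\rangle_{\partial\mathcal T_h}$ because the $\mu_1$ contribution vanishes; what remains is precisely the right-hand side claimed in the lemma. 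I expect the only slightly delicate steps to be the handling of the stabilization term (checking that $\Pi_k^\partial$ may be discarded against $\Pi_k^\partial w_1-\mu_1$) and the interior-face cancellations; everything else is routine bookkeeping with the $L^2$ projections and elementwise integration by parts.
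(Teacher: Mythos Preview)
Your proposal is correct and follows essentially the same route as the paper: expand $\mathscr A$ and $\mathscr C$ via \eqref{def_A}--\eqref{def_C}, replace projected quantities by exact ones using the orthogonality of $\bm\Pi_k^o$, $\Pi_{k+1}^o$, $\Pi_k^\partial$, and invoke \eqref{Drift_Diffusion_Main_Equation_Mixed_Weak_Form_a} and \eqref{Drift_Diffusion_Main_Equation_Mixed_Weak_Form_c} to eliminate the exact-solution pieces. The only difference is organizational: you record the variational identities and the interior-face cancellations $\langle\bm q\cdot\bm n,\mu_1\rangle_{\partial\mathcal T_h}=0$, $\langle\bm p\cdot\bm n(\Pi_k^\partial u-u),\mu_1\rangle_{\partial\mathcal T_h}=0$ up front, whereas the paper absorbs these implicitly when passing to the final line.
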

\begin{proof} 
	By the definition of ${\mathscr {A}}$ and $\mathscr C$ {in (\ref{def_A}) and (\ref{def_C}) respectively, the projections  and integrating by parts}, we get
	\begin{align*}
	\hspace{1em}&\hspace{-1em} \mathscr A(\bm{\Pi}_k^o\bm q, \Pi_{k+1}^ou,\Pi_k^{\partial}u; \bm r_1,w_1,\mu_1)\\
	&=\langle (\bm{\Pi}^o_{k}\bm q-\bm q)\cdot \bm n,w_1 - \mu_1
	\rangle_{\partial\mathcal{T}_h} + (\nabla\cdot \bm q, w_1)_{\mathcal T_h}\\
	&\quad+\langle h_K^{-1}(\Pi_{k+1}^o u-u),\Pi_{k}^{\partial}w_1-\mu_1\rangle_{\partial\mathcal{T}_h},
	\end{align*}
	{where we have also used (\ref{Drift_Diffusion_Main_Equation_Mixed_Weak_Form_a})}. In addition,
	\begin{align*}
	\mathscr C(\bm p,\bm p; \Pi_{k+1}^ou,\Pi_k^{\partial}u; w_1)=( \bm p \Pi_{k+1}^ou, \nabla w_1)_{\mathcal{T}_h} - \langle  \bm p \cdot\bm n \Pi_k^{\partial}u, w_1 \rangle_{\partial\mathcal{T}_h}.
	\end{align*}
	Hence, { again using the projections}, we have 
	\begin{align*}
	\hspace{0.1em}&\hspace{-0.1em} M(\Pi_{k+1}^ou, w_1)_{\mathcal{T}_h}
	+\mathscr A(\bm{\Pi}_k^o\bm q, \Pi_{k+1}^ou,\Pi_k^{\partial}u;\bm r_1,w_1,\mu_1)+\mathscr C(\bm p; \Pi_{k+1}^ou,\Pi_k^{\partial}u; w_1)\\
	&=(Mu, w_1)_{\mathcal{T}_h}+ \langle (\bm{\Pi}^o_{k}\bm q-\bm q)\cdot \bm n,w_1 - \mu_1
	\rangle_{\partial\mathcal{T}_h} + (\nabla\cdot \bm q, w_1)_{\mathcal T_h}\\
	&\quad+\langle h_K^{-1}(\Pi_{k+1}^o u-u),\Pi_{k}^{\partial}w_1-\mu_1\rangle_{\partial\mathcal{T}_h} + ( \bm p \Pi_{k+1}^ou, \nabla w_1)_{\mathcal{T}_h} - \langle  \bm p \cdot\bm n \Pi_k^{\partial}u, w_1 \rangle_{\partial\mathcal{T}_h}.
	\end{align*}
	Since, {using (\ref{Drift_Diffusion_Main_Equation_Mixed_Weak_Form_c}),} $\nabla\cdot\bm q = \nabla\cdot(\bm p u) - u_t$, then we have 
	\begin{align*}
	(\nabla\cdot\bm q, w_1)_{\mathcal T_h} = -(u_t, w_1)_{\mathcal T_h} + \langle \bm p\cdot\bm n u, w_1  \rangle_{\partial \mathcal T_h} - (\bm pu, \nabla u)_{\mathcal T_h}.
	\end{align*}
	This implies that
	\begin{align*}
	\hspace{0.1em}&\hspace{-0.1em} M(\Pi_{k+1}^ou, w_1)_{\mathcal{T}_h}
	+\mathscr A(\bm{\Pi}_k^o\bm q, \Pi_{k+1}^ou,\Pi_k^{\partial}u;\bm r_1,w_1,\mu_1)+\mathscr C(\bm p; \Pi_{k+1}^ou,\Pi_k^{\partial}u; w_1)\\
	&=(Mu - u_t, w_1)_{\mathcal{T}_h}+ \langle (\bm{\Pi}^o_{k}\bm q-\bm q)\cdot \bm n,w_1 - \mu_1
	\rangle_{\partial\mathcal{T}_h} + ( \bm p (\Pi_{k+1}^ou - u), \nabla w_1)_{\mathcal{T}_h}\\
	&\quad- \langle  \bm p \cdot\bm n (\Pi_k^{\partial}u -u), w_1 - \mu_1 \rangle_{\partial\mathcal{T}_h} +\langle h_K^{-1}(\Pi_{k+1}^o u-u),\Pi_{k}^{\partial}w_1-\mu_1\rangle_{\partial\mathcal{T}_h}.
	\end{align*}
	{and completes the proof of the lemma.}
\end{proof}
To simplify  notation, we define
\begin{align*}
\eta_h^{\bm q}:=\bm{\Pi}_k^o\bm q-\bm q_{Ih},\ \ \
\eta_h^{u}:=\Pi_{k+1}^ou-u_{Ih},\ \ \
\eta_h^{\widehat u}:=\Pi_k^{\partial}u-\widehat u_{Ih}.
\end{align*}
We then subtract the equation in \Cref{projetion_error} from \eqref{projection01} to get the following lemma.
\begin{lemma} {Under the conditions of Lemma \ref{projetion_error}}, 
	we have the error equation
	\begin{align}\label{Appendix_error_equation}
	\begin{split}
	\hspace{1em}&\hspace{-1em} M(\eta_h^u,w_1)_{\mathcal{T}_h}+ \mathscr A (\eta_h^{\bm q}, \eta_h^{u},\eta_h^{\widehat u};\bm r_1,w_1, \mu_1)+\mathscr C(\bm p,\bm p; \eta_h^{u},\eta_h^{\widehat u};w_1)\\
	&= \langle (\bm{\Pi}^o_{k}\bm q-\bm q)\cdot \bm n,w_1 - \mu_1
	\rangle_{\partial\mathcal{T}_h} + ( \bm p (\Pi_{k+1}^ou - u), \nabla w_1)_{\mathcal{T}_h}\\
	&\quad- \langle  \bm p \cdot\bm n (\Pi_k^{\partial}u -u), w_1 \rangle_{\partial\mathcal{T}_h} +\langle h_K^{-1}(\Pi_{k+1}^o u-u),\Pi_{k}^{\partial}w_1-\mu_1\rangle_{\partial\mathcal{T}_h}.
	\end{split}
	\end{align}
	holds for all $(\bm r_1, w_1, \mu_1)\in \bm Q_h\times V_h\times\widehat V_{h}(0)$.
\end{lemma}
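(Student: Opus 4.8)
The plan is to derive \eqref{Appendix_error_equation} by a direct subtraction: the triple $(\bm\Pi_k^o\bm q,\Pi_{k+1}^o u,\Pi_k^\partial u)$ satisfies the identity of \Cref{projetion_error}, while $(\bm q_{Ih},u_{Ih},\widehat u_{Ih})$ satisfies the defining equation \eqref{projection01}; subtracting the latter from the former and reading off the differences of the arguments will produce the $\eta_h$-variables on the left and the consistency terms on the right.

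Concretely, I would first write \eqref{projection01} with the same test triple $(\bm r_1,w_1,\mu_1)\in\bm Q_h\times V_h\times\widehat V_h(0)$ used in \Cref{projetion_error}, namely
\begin{equation*}
M(u_{Ih},w_1)_{\mathcal T_h}+\mathscr A(\bm q_{Ih},u_{Ih},\widehat u_{Ih};\bm r_1,w_1,\mu_1)+\mathscr C(\bm p,\bm p;u_{Ih},\widehat u_{Ih};w_1)=(Mu-u_t,w_1)_{\mathcal T_h}.
\end{equation*}
Subtracting this from the identity of \Cref{projetion_error}, I would then use linearity: $M(\cdot,w_1)_{\mathcal T_h}$ is linear in its first slot, $\mathscr A$ is linear in the triple $(\bm q_h,u_h,\widehat u_h)$ (every term of \eqref{def_A} is), and $\mathscr C(\bm p,\bm p;\cdot,\cdot;w_1)$ is linear in $(u_h,\widehat u_h)$ by \eqref{def_C}. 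Hence the left-hand side collapses exactly to $M(\eta_h^u,w_1)_{\mathcal T_h}+\mathscr A(\eta_h^{\bm q},\eta_h^u,\eta_h^{\widehat u};\bm r_1,w_1,\mu_1)+\mathscr C(\bm p,\bm p;\eta_h^u,\eta_h^{\widehat u};w_1)$ by the definitions of $\eta_h^{\bm q},\eta_h^u,\eta_h^{\widehat u}$, while on the right the two copies of $(Mu-u_t,w_1)_{\mathcal T_h}$ cancel, leaving the four consistency terms of \Cref{projetion_error}.

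The one remaining step is to reconcile the term $-\langle\bm p\cdot\bm n(\Pi_k^\partial u-u),w_1-\mu_1\rangle_{\partial\mathcal T_h}$ coming out of \Cref{projetion_error} with the term $-\langle\bm p\cdot\bm n(\Pi_k^\partial u-u),w_1\rangle_{\partial\mathcal T_h}$ in \eqref{Appendix_error_equation}; that is, I need $\langle\bm p\cdot\bm n(\Pi_k^\partial u-u),\mu_1\rangle_{\partial\mathcal T_h}=0$. This follows because $u$ and $\Pi_k^\partial u$ are single-valued on $\mathcal E_h$ and $\mu_1\in\widehat V_h(0)$: on every interior face the two adjoining elements contribute the same factor $(\Pi_k^\partial u-u)\mu_1$ with opposite outward normals $\bm n$, so the contributions cancel, and on $\partial\Omega$ the factor $\mu_1$ vanishes. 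Substituting this back gives \eqref{Appendix_error_equation}.

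I do not anticipate a genuine obstacle: the argument is essentially bookkeeping once the multilinearity of $\mathscr A$ and $\mathscr C$ is invoked. The only subtlety worth flagging is the cancellation of the single boundary term above, which uses the same single-valuedness-plus-normal-flip mechanism already exploited implicitly in deriving \Cref{projetion_error}.
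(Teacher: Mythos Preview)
Your proposal is correct and follows the paper's own approach exactly: the paper's proof is the single sentence ``We then subtract the equation in \Cref{projetion_error} from \eqref{projection01} to get the following lemma,'' and you have simply spelled out the linearity bookkeeping. Your observation about the vanishing of $\langle\bm p\cdot\bm n(\Pi_k^\partial u-u),\mu_1\rangle_{\partial\mathcal T_h}$ via single-valuedness and normal-sign cancellation is a genuine detail the paper leaves implicit, and your argument for it is sound.
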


\subsection{Main error estimate}
{We can now prove (\ref{HDG_elliptic_projection_approximation2}).}
\begin{lemma} 
	For $h$ small enough, we have the error estimates
	\begin{align*}
	\|{\bm q}-{\bm q}_{Ih} \|_{\mathcal{T}_h}+\|h_K^{-1/2}(\Pi_k^{\partial}u_{Ih}-\widehat u_{Ih} )\|_{\partial\mathcal{T}_h}\le Ch^{k+1}|u|_{k+2}.
	\end{align*}
\end{lemma}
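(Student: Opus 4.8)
The plan is to run an energy argument on the error equation \eqref{Appendix_error_equation}. First I would take $(\bm r_1,w_1,\mu_1)=(\eta_h^{\bm q},\eta_h^u,\eta_h^{\widehat u})$ there. Since $\widehat u_{Ih}=\Pi_k^\partial g_u$ and $u=g_u$ on $\partial\Omega$ we have $\eta_h^{\widehat u}\in\widehat V_h(0)$, so \Cref{basis_property1} gives $\mathscr A(\eta_h^{\bm q},\eta_h^u,\eta_h^{\widehat u};\eta_h^{\bm q},\eta_h^u,\eta_h^{\widehat u})=\|\eta_h^{\bm q}\|_{\mathcal T_h}^2+\|h_K^{-1/2}(\Pi_k^\partial\eta_h^u-\eta_h^{\widehat u})\|_{\partial\mathcal T_h}^2$. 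Moreover $(\eta_h^{\bm q},\eta_h^u,\eta_h^{\widehat u})$ satisfies \eqref{Drift_Diffusion_HDG_Formulation_a}: both $(\bm q_{Ih},u_{Ih},\widehat u_{Ih})$ (take $w_1=\mu_1=0$ in \eqref{projection01}) and $(\bm\Pi_k^o\bm q,\Pi_{k+1}^ou,\Pi_k^\partial u)$ do, the latter by the defining properties of the $L^2$ projections and $\bm q+\nabla u=0$. Hence \Cref{energy_argument1} applies to it, and I will use it repeatedly to bound $\|\nabla\eta_h^u\|_{\mathcal T_h}$ and $\|h_K^{-1/2}(\eta_h^u-\eta_h^{\widehat u})\|_{\partial\mathcal T_h}$ by the \emph{energy norm} $\|\eta_h^{\bm q}\|_{\mathcal T_h}+\|h_K^{-1/2}(\Pi_k^\partial\eta_h^u-\eta_h^{\widehat u})\|_{\partial\mathcal T_h}$.

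Next I would dispose of the convection term $\mathscr C(\bm p,\bm p;\eta_h^u,\eta_h^{\widehat u};\eta_h^u)$ by integrating by parts exactly as in the treatment of $R_5+R_6$ in the proof of \Cref{main_result_on_subinterval}. Using that $\eta_h^{\widehat u}$ is single valued on interior faces and vanishes on $\partial\Omega$ (so that $\langle\bm p\cdot\bm n,(\eta_h^{\widehat u})^2\rangle_{\partial\mathcal T_h}=0$), this gives
\[
\mathscr C(\bm p,\bm p;\eta_h^u,\eta_h^{\widehat u};\eta_h^u)=-\tfrac12(\nabla\cdot\bm p,(\eta_h^u)^2)_{\mathcal T_h}+\tfrac12\langle\bm p\cdot\bm n,(\eta_h^u-\eta_h^{\widehat u})^2\rangle_{\partial\mathcal T_h}.
\]
The volume term is bounded in modulus by $\tfrac12\|\nabla\cdot\bm p\|_{0,\infty}\|\eta_h^u\|_{\mathcal T_h}^2$ and, since $M\ge\|\nabla\cdot\bm p\|_{0,\infty}$ by \eqref{Mge}, it is absorbed by the term $M\|\eta_h^u\|_{\mathcal T_h}^2$ on the left, leaving $\tfrac12 M\|\eta_h^u\|_{\mathcal T_h}^2\ge0$ (if $M=0$ then $\nabla\cdot\bm p=0$ and the term is absent). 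Moved to the right, the boundary term is at most $\tfrac12\|\bm p\|_{0,\infty}\|\eta_h^u-\eta_h^{\widehat u}\|_{\partial\mathcal T_h}^2\le C\|\bm p\|_{0,\infty}\,h\big(\|\eta_h^{\bm q}\|_{\mathcal T_h}^2+\|h_K^{-1/2}(\Pi_k^\partial\eta_h^u-\eta_h^{\widehat u})\|_{\partial\mathcal T_h}^2\big)$, using $h_K\le h$ and \Cref{energy_argument1}, hence absorbed once $h$ is small. (This is where the hypothesis ``$h$ small enough'' is used.)

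It then remains to bound the four consistency terms on the right of \eqref{Appendix_error_equation} with $w_1=\eta_h^u$, $\mu_1=\eta_h^{\widehat u}$. Each one is handled by Cauchy--Schwarz together with the approximation bounds \eqref{classical_ine}, a scaled trace inequality for $(\bm\Pi_k^o\bm q-\bm q)\cdot\bm n$ on $\partial\mathcal T_h$, the trace/inverse estimate \eqref{eq27b}, and \Cref{energy_argument1} to turn $\|\nabla\eta_h^u\|_{\mathcal T_h}$ and $\|h_K^{-1/2}(\eta_h^u-\eta_h^{\widehat u})\|_{\partial\mathcal T_h}$ into the energy norm. The delicate one is $-\langle\bm p\cdot\bm n(\Pi_k^\partial u-u),\eta_h^u\rangle_{\partial\mathcal T_h}$: I would first replace $\eta_h^u$ by $\eta_h^u-\eta_h^{\widehat u}$, noting that $\langle\bm p\cdot\bm n(\Pi_k^\partial u-u),\eta_h^{\widehat u}\rangle_{\partial\mathcal T_h}=0$ because $\bm p$, $u$, $\Pi_k^\partial u$ and $\eta_h^{\widehat u}$ are single valued on interior faces (opposite normals cancel) while $\eta_h^{\widehat u}=0$ on $\partial\Omega$; the remaining piece then carries the extra factor $h^{1/2}$ needed to reach order $h^{k+1}$. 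The term $\langle h_K^{-1}(\Pi_{k+1}^ou-u),\Pi_k^\partial w_1-\mu_1\rangle_{\partial\mathcal T_h}$ is treated by inserting $\Pi_k^\partial$ on the first argument (the second is already of degree $k$ on each face) and then invoking \eqref{eq27b}. In this way each of the four terms is bounded by $\delta$ times the energy norm squared plus $Ch^{2k+2}|u|_{k+2}^2$, with $C$ depending on $\|\bm p\|_{1,\infty,\Omega}$.

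Collecting these estimates, choosing $\delta$ and then $h$ small enough to absorb all energy-norm contributions into the left-hand side, yields $\|\eta_h^{\bm q}\|_{\mathcal T_h}^2+\|h_K^{-1/2}(\Pi_k^\partial\eta_h^u-\eta_h^{\widehat u})\|_{\partial\mathcal T_h}^2\le Ch^{2k+2}|u|_{k+2}^2$. The claimed estimate then follows from the triangle inequality applied to $\bm q-\bm q_{Ih}=(\bm q-\bm\Pi_k^o\bm q)+\eta_h^{\bm q}$ and to $\Pi_k^\partial u_{Ih}-\widehat u_{Ih}=-(\Pi_k^\partial\eta_h^u-\eta_h^{\widehat u})+\Pi_k^\partial(\Pi_{k+1}^ou-u)$, using \eqref{classical_ine}, \eqref{eq27b} and $|\bm q|_{k+1}=|u|_{k+2}$. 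I expect the main obstacle to be the indefinite boundary contribution $\tfrac12\langle\bm p\cdot\bm n,(\eta_h^u-\eta_h^{\widehat u})^2\rangle_{\partial\mathcal T_h}$ from the convection term, together with the consistency term containing $\Pi_k^\partial u-u$: in both cases the argument only closes because \Cref{energy_argument1} allows one to trade the unweighted face norm of $\eta_h^u-\eta_h^{\widehat u}$ for $h^{1/2}$ times the energy norm, and because the single-valued/zero-on-$\partial\Omega$ structure of $\eta_h^{\widehat u}$ removes the dangerous part of the latter term.
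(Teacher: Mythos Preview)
Your proposal is correct and follows essentially the same approach as the paper's proof: the same test-function choice $(\eta_h^{\bm q},\eta_h^u,\eta_h^{\widehat u})$, the same integration-by-parts identity for $\mathscr C$ yielding $(M-\tfrac12\nabla\cdot\bm p,(\eta_h^u)^2)+\tfrac12\langle\bm p\cdot\bm n,(\eta_h^u-\eta_h^{\widehat u})^2\rangle$, the same use of \Cref{energy_argument1} to absorb the boundary term for $h$ small, and the same term-by-term bounds on the four consistency contributions followed by the triangle inequality. Your treatment is in places more explicit than the paper's (e.g.\ verifying that $(\eta_h^{\bm q},\eta_h^u,\eta_h^{\widehat u})$ satisfies \eqref{Drift_Diffusion_HDG_Formulation_a} so that \Cref{energy_argument1} applies, and justifying the replacement of $\eta_h^u$ by $\eta_h^u-\eta_h^{\widehat u}$ in the $\Pi_k^\partial u-u$ term), but the argument is the same.
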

\begin{proof}
	We take $(\bm r_1, w_1, \mu_1)  = (\eta_h^{\bm q},\eta_h^u,\eta_h^{\widehat u})$ in \eqref{Appendix_error_equation}. First
	\begin{align*}
	\mathscr A (\eta_h^{\bm q},\eta_h^u,\eta_h^{\widehat u};\eta_h^{\bm q},\eta_h^u,\eta_h^{\widehat u}) = \|\eta_h^{\bm q}\|_{\mathcal T_h}^2 + \|h_K^{-1/2}(\Pi_k^\partial\eta_h^u-\eta_h^{\widehat u}) \|_{\partial\mathcal T_h}^2.
	\end{align*}
	Next, 
	\begin{align*}
	\hspace{1em}&\hspace{-1em} M(\eta_h^u,\eta_h^u)_{\mathcal{T}_h}+\mathscr C(\bm p,\bm p; \eta_h^{u},\eta_h^{\widehat u};\eta_h^{u})\\
	& =M(\eta_h^u,\eta_h^u)_{\mathcal{T}_h} +  ( \bm p\eta_h^u, \nabla \eta_h^u)_{\mathcal{T}_h} - \langle  \bm p \cdot\bm n \eta_h^{\widehat u}, \eta_h^u \rangle_{\partial\mathcal{T}_h}\\
	&=(M-\frac{1}{2}\nabla\cdot \bm p,\eta_h^u\eta_h^u)_{\mathcal{T}_h} + \frac 1 2 \langle\bm p\cdot\bm n \eta_h^u,\eta_h^u \rangle_{\partial\mathcal{T}_h}- \langle  \bm p \cdot\bm n \eta_h^{\widehat u}, \eta_h^u \rangle_{\partial\mathcal{T}_h}\\
	&=(M-\frac{1}{2}\nabla\cdot \bm p,\eta_h^u\eta_h^u)_{\mathcal{T}_h}+\frac{1}{2}\langle\bm p\cdot\bm n (\eta_h^u-\eta_h^{\widehat u}),\eta_h^u-\eta_h^{\widehat u} \rangle_{\partial\mathcal{T}_h}\\
	&\ge \frac{M}{2}\|\eta_h^u\|_{\mathcal{T}_h}
	-\frac{1}{2}\||\bm p\cdot\bm n|(\Pi_k^{\partial}\eta_h^u-\eta_h^{\widehat u})\|^2_{\partial\mathcal{T}_h}-Ch\|\bm p\|_{0,\infty}\|\nabla\xi_h^{u}\|^2_{\mathcal{T}_h}.
	\end{align*}
	For $h$ small enough, we obtain
	\begin{align*}
	\hspace{1em}&\hspace{-1em} M(\eta_h^u,\eta_h^u)_{\mathcal{T}_h}+\mathscr A (\eta_h^{\bm q},\eta_h^u,\eta_h^{\widehat u};\eta_h^{\bm q},\eta_h^u,\eta_h^{\widehat u}) +\mathscr C(\bm p,\bm p; \eta_h^{u},\eta_h^{\widehat u};\eta_h^{u})\\
	&\ge
	\frac{1}{2}\left(M\|\eta_h^u\|_{\mathcal{T}_h}^2
	+\|\eta_h^{{\bm q}}\|^2_{\mathcal{T}_h}+\|h_K^{-1/2}(\Pi_k^{\partial}\eta_h^{u}-\eta_h^{\widehat u}   )\|^2_{\partial\mathcal{T}_h}\right).
	\end{align*}
	
	On the other hand, 
	\begin{align*}
	\hspace{1em}&\hspace{-1em} M(\eta_h^u,\eta_h^u)_{\mathcal{T}_h}+\mathscr A (\eta_h^{\bm q},\eta_h^u,\eta_h^{\widehat u};\eta_h^{\bm q},\eta_h^u,\eta_h^{\widehat u}) +\mathscr C(\bm p,\bm p; \eta_h^{u},\eta_h^{\widehat u};\eta_h^{u})\\
	&=\langle (\bm{\Pi}^o_{k}\bm q-\bm q)\cdot \bm n,\eta_h^u-\eta_h^{\widehat u}
	\rangle_{\partial\mathcal{T}_h} + ( \bm p (\Pi_{k+1}^ou - u), \nabla \eta_h^u)_{\mathcal{T}_h}\\
	&\quad- \langle  \bm p \cdot\bm n (\Pi_k^{\partial}u -u), \eta_h^u -\eta_h^{\widehat u} \rangle_{\partial\mathcal{T}_h} +\langle h_K^{-1}(\Pi_{k+1}^o u-u),\Pi_{k}^{\partial}\eta_h^u-\eta_h^{\widehat u}\rangle_{\partial\mathcal{T}_h}\\
	&=:R_1+R_2+R_3+R_4.
	\end{align*}
	
	Next, we estimate $\{R_i\}_{i=1}^4$ term by term.  For the first term $R_1$,  \Cref{energy_argument1}  gives 
	\begin{align*}
	R_1&\le Ch^{k+1}|\bm q|_{k+1}\|h_K^{-1/2}(\eta_h^u-\eta_h^{\widehat u})\|_{\partial\mathcal{T}_h},\\%
	&\le  Ch^{k+1}|\bm q|_{k+1}\left(\|\eta_h^{\bm q}\|_{\mathcal{T}_h}+
	\|h_K^{-1/2}(\Pi_k^{\partial}\eta_h^u-\eta_h^{\widehat u})\|_{\partial\mathcal{T}_h}\right).
	\end{align*}
	For the term $R_2$, by  \Cref{HDG_Poincare} and \Cref{energy_argument1} to get 
	\begin{align*}
	R_2&\le Ch^{k+2}|u|_{k+2}\|\nabla\eta_h^u\|_{\mathcal{T}_h}\\
	&\le  Ch^{k+2}|u|_{k+2}\left(\|\eta_h^{\bm p}\|_{\mathcal{T}_h}+\|h_K^{-1/2}(\Pi_k^{\partial}\eta_h^u-\eta_h^{\widehat u})\|_{\partial\mathcal{T}_h}\right).
	\end{align*}
	For the term $R_3$, we use \Cref{energy_argument1}  to get 
	\begin{align*}
	R_3  &= \langle \bm p\cdot\bm n (\Pi_k^{\partial} u-u), \eta_h^u-\eta_h^{\widehat u} \rangle_{\partial\mathcal{T}_h}\\
	&\le  Ch^{k+1}|u|_{k+1}\|h_K^{-1/2}(\eta_h^u-\eta_h^{\widehat u})\|_{\partial\mathcal{T}_h}\\
	&\le  Ch^{k+1}|u|_{k+1}\left(
	\|\eta_h^{\bm p}\|_{\mathcal{T}_h}+
	\|h_K^{-1/2}(\Pi_k^{\partial}\eta_h^u-\eta_h^{\widehat u})\|_{\partial\mathcal{T}_h}
	\right).
	\end{align*}
	Moreover, for the last  term we have 
	\begin{align*}
	R_4 \le Ch^{k+1}|u|_{k+1}  \|h_K^{-1/2}(\Pi_k^{\partial}\eta_h^u-\eta_h^{\widehat u})\|_{\partial\mathcal{T}_h}.
	\end{align*}
	Use the Cauchy-Schwarz inequality for the above estimates of $\{R_i\}_{i=1}^4$, we get 
	\begin{align*}
	\|\eta_h^{{\bm q}}\|_{\mathcal{T}_h}+\|h_K^{-1/2}(\Pi_k^{\partial}\eta_h^{u}-\eta_h^{\widehat u}   )\|_{\partial\mathcal{T}_h}
	\le Ch^{k+1}|u|_{k+2}.
	\end{align*}
	{Use of the triangle inequality and estimates (\ref{eq27a}) and (\ref{eq27b}) completes the estimate.}
\end{proof}

\subsection{Duality arguments}
To obtain a $L^2$ norm estimate  of $\|\eta_h^{u}\|_{\mathcal T_h}$, we 
use the dual problem (\ref{D2}) with corresponding a priori estimate
(\ref{elliptic_regularity}).	
To perform the error analysis, the main difficulty is  to deal with the nonlinearity. We  define a new form $\mathscr C^\star$ which is related to the trilinear form $\mathscr C$:
\begin{align}\label{def_C_star}
\begin{split}
{\mathscr{C}}^{\star}(\bm p,\bm p; u_h,\widehat u_h;w_1)= - ( \bm p u_h, \nabla w_1)_{\mathcal{T}_h} +\langle \bm p \cdot\bm n \widehat u_h, w_1\rangle_{\partial\mathcal{T}_h}-(\nabla\cdot\bm pu_h,w_1)_{\mathcal{T}_h}.
\end{split}
\end{align}

Next, we give a property of the operators $\mathscr C$ and $\mathscr C^\star$. We omit the proof  since it is very straightforward.
\begin{lemma}\label{C_property1}
	For all $(u_h ,\widehat u_h, w_1,\mu_1)\in  V_h\times\widehat V_{h}(0)\times V_h\times\widehat V_{h}(0)$, we have 
	\begin{align*}
	\mathscr C(\bm p,\bm p;u_h,\widehat u_h;w_1)+ \mathscr C^{\star}(\bm p,\bm p;w_1,\mu_1; -u_h)
	=\langle \bm p\cdot\bm n(u_h-\widehat u_h), w_1-\mu_1 \rangle_{\partial\mathcal{T}_h}.
	\end{align*}
\end{lemma}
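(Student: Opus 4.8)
The plan is to verify this identity directly, by expanding both forms and integrating by parts element by element. First I would substitute the definitions \eqref{def_C} and \eqref{def_C_star} (taking care with the shifted arguments $(w_1,\mu_1;-u_h)$ in $\mathscr C^\star$), obtaining
\[
\mathscr C(\bm p,\bm p;u_h,\widehat u_h;w_1) = (\bm p u_h,\nabla w_1)_{\mathcal{T}_h} - \langle \bm p\cdot\bm n\,\widehat u_h, w_1\rangle_{\partial\mathcal{T}_h}
\]
and
\[
\mathscr C^\star(\bm p,\bm p;w_1,\mu_1;-u_h) = (\bm p w_1,\nabla u_h)_{\mathcal{T}_h} + (\nabla\cdot\bm p\, w_1, u_h)_{\mathcal{T}_h} - \langle \bm p\cdot\bm n\,\mu_1, u_h\rangle_{\partial\mathcal{T}_h}.
\]
The three volume contributions in the sum combine via the Leibniz rule $\nabla\cdot(\bm p\,u_h w_1) = \bm p\cdot(w_1\nabla u_h + u_h\nabla w_1) + (\nabla\cdot\bm p)\,u_h w_1$ into $(\nabla\cdot(\bm p\,u_h w_1),1)_{\mathcal{T}_h}$, and applying the divergence theorem on each $K\in\mathcal{T}_h$ converts this into $\langle \bm p\cdot\bm n\,u_h, w_1\rangle_{\partial\mathcal{T}_h}$.

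Collecting this with the remaining boundary terms gives
\[
\mathscr C(\bm p,\bm p;u_h,\widehat u_h;w_1)+\mathscr C^\star(\bm p,\bm p;w_1,\mu_1;-u_h) = \langle \bm p\cdot\bm n\,u_h, w_1\rangle_{\partial\mathcal{T}_h} - \langle \bm p\cdot\bm n\,\widehat u_h, w_1\rangle_{\partial\mathcal{T}_h} - \langle \bm p\cdot\bm n\,u_h, \mu_1\rangle_{\partial\mathcal{T}_h}.
\]
To recognize the asserted right-hand side it then suffices to add and subtract $\langle \bm p\cdot\bm n\,\widehat u_h,\mu_1\rangle_{\partial\mathcal{T}_h}$; this term vanishes because $\widehat u_h,\mu_1\in\widehat V_h(0)$ are single valued on each interior face whereas $\bm p\cdot\bm n$ changes sign between the two elements sharing that face, while on $\partial\Omega$ both traces are zero. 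Once inserted, the four boundary terms factor exactly as $\langle \bm p\cdot\bm n(u_h-\widehat u_h), w_1-\mu_1\rangle_{\partial\mathcal{T}_h}$, which is the claim.

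There is no genuine obstacle here, since the statement is an algebraic identity; the only points needing a moment's care are the sign bookkeeping in $\mathscr C^\star$ after the argument substitution and the cancellation of $\langle\bm p\cdot\bm n\,\widehat u_h,\mu_1\rangle_{\partial\mathcal{T}_h}$, which relies on the single-valuedness of the hybrid traces together with the continuity of $\bm p$ across element faces. This is precisely why the authors describe the proof as straightforward and omit it.
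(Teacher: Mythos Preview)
Your argument is correct and is exactly the direct computation the authors have in mind; indeed, the paper explicitly states that the proof is ``very straightforward'' and omits it. The only substantive point is the vanishing of $\langle \bm p\cdot\bm n\,\widehat u_h,\mu_1\rangle_{\partial\mathcal{T}_h}$, which you justify correctly via the single-valuedness of the traces and the continuity of $\bm p$.
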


Similarly to \Cref{projetion_error},  we have the following lemma.
\begin{lemma}\label{projetion_error_dual} {Assuming $M$ is chosen sufficiently large, let $({\Phi},\bm \Psi)$ solve (\ref{D2}) then}
	we have the equation
	\begin{align*}
	\hspace{1em}&\hspace{-1em} M({\Pi}_{k+1}^o \Phi,w_1)_{\mathcal{T}_h}+
	\mathscr A(\bm{\Pi}_k^o\bm{\Psi},\Pi_{k+1}^o\Phi,\Pi_k^{\partial}\Phi;\bm r_1, w_1,\mu_1) + \mathscr C^\star(\bm p,\bm p; \Pi_{k+1}^o\Phi,\Pi_k^{\partial}\Phi;w_1)\\
	&= (\Theta,w_1)+\langle(\bm{\Pi}_k^o\bm \Psi-\bm \Psi\cdot)\bm n,w_1-\mu_1
	\rangle_{\partial\mathcal{T}_h} +  \langle h_K^{-1}(\Pi_{k+1}^o\Phi-\Phi),\Pi_k^{\partial}w_1-\mu_1
	\rangle_{\partial\mathcal{T}_h}\\
	&\quad-( \bm p (\Pi_{k+1}^o\Phi-\Phi), \nabla w_1)_{\mathcal{T}_h}
	+\langle \bm p\cdot\bm n (\Pi_k^{\partial} \Phi-\Phi), w_1 \rangle_{\partial\mathcal{T}_h} \\
	&\quad - (\nabla\cdot\bm p(\Pi_{k+1}^o\Phi-\Phi),w_1)_{\mathcal{T}_h}.
	\end{align*}
	holds for all $(\bm r_1, w_1, \mu_1)\in \bm Q_h\times V_h\times\widehat V_{h}(0)$.
\end{lemma}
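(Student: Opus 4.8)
The plan is to imitate the proof of \Cref{projetion_error} line for line, now with the adjoint problem \eqref{D2} in place of \eqref{Drift_Diffusion_Main_Equation_Mixed_Weak_Form} and with $\mathscr C^\star$ in place of $\mathscr C$. First I would expand $\mathscr A(\bm\Pi_k^o\bm\Psi,\Pi_{k+1}^o\Phi,\Pi_k^{\partial}\Phi;\bm r_1,w_1,\mu_1)$ via \eqref{def_A} and use the defining orthogonality of the $L^2$ projections in \eqref{L2_projection} to strip off projections whenever the competing factor is a polynomial of low enough degree: $(\bm\Pi_k^o\bm\Psi,\bm r_1)_{\mathcal T_h}=(\bm\Psi,\bm r_1)_{\mathcal T_h}$ since $\bm r_1\in[\mathcal P_k]^d$; $(\Pi_{k+1}^o\Phi,\nabla\cdot\bm r_1)_{\mathcal T_h}=(\Phi,\nabla\cdot\bm r_1)_{\mathcal T_h}$ since $\nabla\cdot\bm r_1$ has degree $\le k$; $\langle\Pi_k^{\partial}\Phi,\bm r_1\cdot\bm n\rangle_{\partial\mathcal T_h}=\langle\Phi,\bm r_1\cdot\bm n\rangle_{\partial\mathcal T_h}$ since $\bm r_1\cdot\bm n\in\mathcal P_k(e)$; and, pulling the outer trace projection through, $\langle h_K^{-1}(\Pi_k^{\partial}\Pi_{k+1}^o\Phi-\Pi_k^{\partial}\Phi),\Pi_k^{\partial}w_1-\mu_1\rangle_{\partial\mathcal T_h}=\langle h_K^{-1}(\Pi_{k+1}^o\Phi-\Phi),\Pi_k^{\partial}w_1-\mu_1\rangle_{\partial\mathcal T_h}$, which is already one of the terms on the right-hand side.

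Next I would integrate by parts element by element. The three ``$\bm r_1$-terms'' then collapse to $(\bm\Psi+\nabla\Phi,\bm r_1)_{\mathcal T_h}$, which vanishes by the first equation of \eqref{D2}. The two ``$w_1,\mu_1$-terms'' $(\nabla\cdot\bm\Pi_k^o\bm\Psi,w_1)_{\mathcal T_h}-\langle\bm\Pi_k^o\bm\Psi\cdot\bm n,\mu_1\rangle_{\partial\mathcal T_h}$ I would rewrite, integrating by parts twice and using $(\bm\Pi_k^o\bm\Psi,\nabla w_1)_{\mathcal T_h}=(\bm\Psi,\nabla w_1)_{\mathcal T_h}$ (since $\nabla w_1\in[\mathcal P_k]^d$), as $(\nabla\cdot\bm\Psi,w_1)_{\mathcal T_h}+\langle(\bm\Pi_k^o\bm\Psi-\bm\Psi)\cdot\bm n,w_1-\mu_1\rangle_{\partial\mathcal T_h}$; here I absorb the extra term $\langle\bm\Psi\cdot\bm n,\mu_1\rangle_{\partial\mathcal T_h}=0$, which holds because $\bm\Psi\in\bm H^1(\Omega)$ has single-valued normal trace and $\mu_1\in\widehat V_h(0)$ is single valued on interior faces and zero on $\partial\Omega$.

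For the convective contribution I would write $\Pi_{k+1}^o\Phi=\Phi+(\Pi_{k+1}^o\Phi-\Phi)$ and $\Pi_k^{\partial}\Phi=\Phi+(\Pi_k^{\partial}\Phi-\Phi)$ inside $\mathscr C^\star$ from \eqref{def_C_star}. The ``exact'' pieces recombine, after the product-rule identity $-(\bm p\Phi,\nabla w_1)_{\mathcal T_h}+\langle\bm p\cdot\bm n\Phi,w_1\rangle_{\partial\mathcal T_h}-(\nabla\cdot\bm p\,\Phi,w_1)_{\mathcal T_h}=(\bm p\cdot\nabla\Phi,w_1)_{\mathcal T_h}$, into the drift term $(\bm p\cdot\nabla\Phi,w_1)_{\mathcal T_h}$, while the ``error'' pieces are precisely the last three projection-error terms on the right-hand side. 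Finally, $M(\Pi_{k+1}^o\Phi,w_1)_{\mathcal T_h}=M(\Phi,w_1)_{\mathcal T_h}$ combines with $(\nabla\cdot\bm\Psi,w_1)_{\mathcal T_h}$ and $(\bm p\cdot\nabla\Phi,w_1)_{\mathcal T_h}$ to give $(M\Phi+\nabla\cdot\bm\Psi+\bm p\cdot\nabla\Phi,w_1)_{\mathcal T_h}=(\Theta,w_1)$ by the second equation of \eqref{D2}; collecting everything yields the stated identity. I do not expect a genuine obstacle: the only care needed is the polynomial-degree bookkeeping that decides which projections may be removed, and tracking the sign conventions of $\mathscr C^\star$ versus $\mathscr C$ together with the $+\,\bm p\cdot\nabla\Phi$ in \eqref{D2} versus the primal $-\nabla\cdot(\bm p u)$ — which is exactly the reason $\mathscr C^\star$ and \Cref{C_property1} were set up beforehand.
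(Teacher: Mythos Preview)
Your proposal is correct and follows exactly the approach the paper intends: the paper itself gives no proof of \Cref{projetion_error_dual} beyond ``Similarly to \Cref{projetion_error},'' and your argument is precisely that analogy carried out in detail, with the same projection bookkeeping and the expected swap $\mathscr C\to\mathscr C^\star$ to match the adjoint drift term $+\,\bm p\cdot\nabla\Phi$ in \eqref{D2}.
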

{With the above preparation we can now derive estimate (\ref{HDG_elliptic_projection_approximation1}).}
\begin{theorem} Let $u$ and $u_{Ih}$ be the solutions of \eqref{Drift_Diffusion_Main_Equation_Mixed_Weak_Form} and \eqref{projection01}, respectively. If $h$ is small enough, then we have the error estimate
	\begin{align*}
	\|u-u_{Ih}\|_{\mathcal{T}_h}\le Ch^{k+2}\|u\|_{k+2}.
	\end{align*}
\end{theorem}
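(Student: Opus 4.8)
The plan is to run an Aubin--Nitsche duality argument, following the pattern used for the preceding lemma but now exploiting the $H^2$ regularity of the dual solution to gain the extra power of $h$. Since $u-u_{Ih}=(u-\Pi_{k+1}^o u)+\eta_h^u$ with $\eta_h^u:=\Pi_{k+1}^o u-u_{Ih}$, and $\|u-\Pi_{k+1}^o u\|_{\mathcal T_h}\le Ch^{k+2}\|u\|_{k+2}$ by \eqref{eq27a}, it suffices to prove $\|\eta_h^u\|_{\mathcal T_h}\le Ch^{k+2}\|u\|_{k+2}$. First I note that taking $w_1=\mu_1=0$ in \eqref{Appendix_error_equation} shows that $(\eta_h^{\bm q},\eta_h^u,\eta_h^{\widehat u})$ satisfies an equation of the form \eqref{Drift_Diffusion_HDG_Formulation_a}, so \Cref{energy_argument1} and \Cref{HDG_Poincare} apply to it; combined with the error estimate for $\bm q_{Ih}$ already proved (and standard projection bounds) this gives the \emph{energy control} $\|\eta_h^{\bm q}\|_{\mathcal T_h}+\|\nabla\eta_h^u\|_{\mathcal T_h}+\|\eta_h^u\|_{\mathcal T_h}+\|h_K^{-1/2}(\eta_h^u-\eta_h^{\widehat u})\|_{\partial\mathcal T_h}+\|h_K^{-1/2}(\Pi_k^\partial\eta_h^u-\eta_h^{\widehat u})\|_{\partial\mathcal T_h}\le Ch^{k+1}\|u\|_{k+2}$, which will supply one power of $h$ in every consistency term below.

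Next I would set up the duality. Solve the dual problem \eqref{D2} with the same $M$ and $\bm p$ as in \eqref{projection01} and with data $\Theta=\eta_h^u$, and invoke \eqref{elliptic_regularity} to get $\|\bm\Psi\|_1+\|\Phi\|_2\le C_{\mathrm{reg}}\|\eta_h^u\|_{\mathcal T_h}$. Because $\Phi|_{\partial\Omega}=0$ we have $\Pi_k^\partial\Phi\in\widehat V_h(0)$, so we may take $(\bm r_1,w_1,\mu_1)=(-\bm\Pi_k^o\bm\Psi,\Pi_{k+1}^o\Phi,\Pi_k^\partial\Phi)$ in the error equation \eqref{Appendix_error_equation}, and $(\bm r_1,w_1,\mu_1)=(-\eta_h^{\bm q},\eta_h^u,\eta_h^{\widehat u})$ in the dual identity \Cref{projetion_error_dual}. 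Subtracting the two, the $M$-terms coincide, the two $\mathscr A$-terms coincide by \Cref{basis_property1}, and the $\mathscr C$ and $\mathscr C^\star$ contributions (recall \eqref{def_C_star}) combine through \Cref{C_property1} into a single boundary term $\langle\bm p\cdot\bm n(\eta_h^u-\eta_h^{\widehat u}),\Pi_{k+1}^o\Phi-\Pi_k^\partial\Phi\rangle_{\partial\mathcal T_h}$. What remains is an identity of the form $\|\eta_h^u\|_{\mathcal T_h}^2=\sum_i T_i$, where each $T_i$ pairs a projection error of the data ($\bm\Pi_k^o\bm q-\bm q$, $\Pi_{k+1}^o u-u$, $\Pi_k^\partial u-u$) or of the dual solution ($\bm\Pi_k^o\bm\Psi-\bm\Psi$, $\Pi_{k+1}^o\Phi-\Phi$, $\Pi_k^\partial\Phi-\Phi$) against one of $\eta_h^u-\eta_h^{\widehat u}$, $\nabla\eta_h^u$, $\Pi_k^\partial\eta_h^u-\eta_h^{\widehat u}$, $\eta_h^u$, or a trace of $\Pi_{k+1}^o\Phi$, e.g.\ $\langle(\bm\Pi_k^o\bm q-\bm q)\cdot\bm n,\Pi_{k+1}^o\Phi-\Pi_k^\partial\Phi\rangle_{\partial\mathcal T_h}$ and $-\langle\bm p\cdot\bm n(\Pi_k^\partial\Phi-\Phi),\eta_h^u\rangle_{\partial\mathcal T_h}$.

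I would then estimate each $T_i$ using: the energy control above; the interpolation estimates \eqref{eq27a}, \eqref{eq27b} for $\bm q$ and $u$ and their analogues for $\bm\Psi$ and $\Phi$ (here the fact that $\Phi\in H^2$ delivers the additional factor of $h$ beyond the generic bound); and the regularity bound $\|\bm\Psi\|_1+\|\Phi\|_2\le C\|\eta_h^u\|_{\mathcal T_h}$, which extracts a factor $\|\eta_h^u\|_{\mathcal T_h}$ from each term. For the terms in which an interior trace such as $\eta_h^u$ appears alone (not paired with $\eta_h^{\widehat u}$), I would first rewrite them, using that $\Pi_k^\partial\Phi-\Phi$ is single-valued across interior faces and vanishes on $\partial\Omega$, as a sum over interior faces of $\langle\bm p\cdot\bm n(\Pi_k^\partial\Phi-\Phi),[\![\eta_h^u]\!]\rangle_e$ and then express $[\![\eta_h^u]\!]$ via $\eta_h^u-\eta_h^{\widehat u}$ (with $\eta_h^{\widehat u}$ single-valued); similarly, the $L^2$-orthogonality of $\bm\Pi_k^o$ and $\Pi_k^\partial$ is used in the flux term $\langle(\bm\Pi_k^o\bm q-\bm q)\cdot\bm n,\Pi_{k+1}^o\Phi-\Pi_k^\partial\Phi\rangle_{\partial\mathcal T_h}$ to replace the factors by their high-frequency parts before applying Cauchy--Schwarz. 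Summing, for $h$ small enough one reaches $\|\eta_h^u\|_{\mathcal T_h}^2\le Ch^{k+2}\|u\|_{k+2}\|\eta_h^u\|_{\mathcal T_h}+\tfrac12\|\eta_h^u\|_{\mathcal T_h}^2$, which gives the bound on $\eta_h^u$ and hence, with \eqref{eq27a}, the theorem.

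I expect the principal obstacle to be the nonlinear convection term. Controlling it is exactly what forces the introduction of the adjoint trilinear form $\mathscr C^\star$ and the algebraic identity \Cref{C_property1}, and — as in the proof of the preceding lemma — it requires $M$ to be chosen large enough (relative to $\|\nabla\cdot\bm p\|_{0,\infty}$) that the perturbation $-\tfrac12(\nabla\cdot\bm p)\,\eta_h^u\eta_h^u$ is absorbed and both \eqref{projection01} and the dual problem \eqref{D2} are coercive and well posed. Among the individual estimates, the delicate ones are precisely the flux consistency term and the convection boundary terms just described: none of them carries an extra power of $h$ on its face, so the reorganization via projection orthogonality and interface jumps is essential, after which the $O(h^{k+1})$ energy estimate supplies one factor of $h$ and the $H^2$ regularity of $\Phi$ supplies the other. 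Throughout, $h$ small is used (inverse inequality, absorption of perturbations), just as in the statement of the theorem.
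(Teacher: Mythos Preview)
Your proposal is correct and follows essentially the same route as the paper: set up the dual problem \eqref{D2} with $\Theta$ equal to (a sign of) $\eta_h^u$, test the error equation \eqref{Appendix_error_equation} with $(-\bm\Pi_k^o\bm\Psi,\Pi_{k+1}^o\Phi,\Pi_k^\partial\Phi)$ and \Cref{projetion_error_dual} with $(-\eta_h^{\bm q},\eta_h^u,\eta_h^{\widehat u})$, use the symmetry of $\mathscr A$ (\Cref{basis_property1}) and the $\mathscr C/\mathscr C^\star$ identity (\Cref{C_property1}) to reduce to $\|\eta_h^u\|_{\mathcal T_h}^2=\sum_i S_i$, and then estimate each $S_i$ exactly as you outline. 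The only cosmetic differences are that the paper inserts $\eta_h^{\widehat u}$ directly into the lone-trace terms (using that $\bm p\cdot\bm n(\Pi_k^\partial\Phi-\Phi)$ integrates to zero against any single-valued trace) rather than passing through interface jumps, and it ends with an absorbable $Ch^2\|\eta_h^u\|_{\mathcal T_h}^2$ rather than your $\tfrac12\|\eta_h^u\|_{\mathcal T_h}^2$; both lead to the same conclusion for $h$ small.
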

\begin{proof} 
	We take $(\bm r_1,w_1, \mu_1)=(\eta_h^{\bm q},-\eta_h^u,-\eta_h^{\widehat u})$ and $\Theta=-\eta_h^u$ in \Cref{projetion_error_dual} to get
	\begin{align*}
	\hspace{1em}&\hspace{-1em} -M({\Pi}_{k+1}^o\Phi,\eta_h^u)_{\mathcal{T}_h}+
	\mathscr A(\bm{\Pi}_k^o\bm{\Psi},\Pi_{k+1}^o\Phi,\Pi_k^{\partial}\Phi;\eta_h^{\bm q},-\eta_h^u,-\eta_h^{\widehat u}) + \mathscr C^\star(\bm p; \Pi_{k+1}^o\Phi,\Pi_k^{\partial}\Phi;-\eta_h^u)\\
	& = -M(\eta_h^u,{\Pi}_{k+1}^o\Phi)_{\mathcal{T}_h}-
	\mathscr A(\eta_h^{\bm q},\eta_h^u,\eta_h^{\widehat u}; -\bm{\Pi}_k^o\bm{\Psi},\Pi_{k+1}^o\Phi,\Pi_k^{\partial}\Phi) + \mathscr C^\star(\bm p; \Pi_{k+1}^o\Phi,\Pi_k^{\partial}\Phi;-\eta_h^u)\\
	& = -M(\eta_h^u,{\Pi}_{k+1}^o\Phi)_{\mathcal{T}_h}-\langle (\bm{\Pi}^o_{k}\bm q-\bm q)\cdot \bm n,\Pi_{k+1}^o\Phi - \Pi_k^{\partial}\Phi
	\rangle_{\partial\mathcal{T}_h}  -( \bm p (\Pi_{k+1}^ou - u), \nabla \Pi_{k+1}^o\Phi)_{\mathcal{T}_h}\\
	&\quad+ \langle  \bm p \cdot\bm n (\Pi_k^{\partial}u -u), \Pi_{k+1}^o\Phi- \Pi_k^{\partial}\Phi \rangle_{\partial\mathcal{T}_h} -\langle h_K^{-1}(\Pi_{k+1}^o u-u),\Pi_{k}^{\partial} \Pi_{k+1}^o\Phi- \Pi_k^{\partial}\Phi\rangle_{\partial\mathcal{T}_h}  \\
	&\quad +M(\eta_h^u,{\Pi}_{k+1}^o\Phi)_{\mathcal{T}_h}+\mathscr C(\bm p; \eta_h^{u},\eta_h^{\widehat u};{\Pi}_{k+1}^o\Phi) + \mathscr C^\star(\bm p; \Pi_{k+1}^o\Phi,\Pi_k^{\partial}\Phi;-\eta_h^u).	
	\end{align*}
	By \Cref{C_property1} we have 
	\begin{align*}
	\mathscr C(\bm p,\bm p; \eta_h^{u},\eta_h^{\widehat u};{\Pi}_{k+1}^o\Phi) + \mathscr C^\star(\bm p,\bm p; \Pi_{k+1}^o\Phi,\Pi_k^{\partial}\Phi;-\eta_h^u) = \langle \bm p\cdot\bm n (\eta_h^{u}-\eta_h^{\widehat u}), \Pi_{k+1}^o\Phi-\Pi_k^{\partial}\Phi\rangle_{\partial \mathcal T_h}.
	\end{align*}
	This implies
	\begin{align*}
	\hspace{1em}&\hspace{-1em} -M({\Pi}_{k+1}^o\Phi,\eta_h^u)_{\mathcal{T}_h}+
	\mathscr A(\bm{\Pi}_k^o\bm{\Psi},\Pi_{k+1}^o\Phi,\Pi_k^{\partial}\Phi;\eta_h^{\bm q},-\eta_h^u,-\eta_h^{\widehat u}) + \mathscr C^\star(\bm p,\bm p; \Pi_{k+1}^o\Phi,\Pi_k^{\partial}\Phi;-\eta_h^u)\\
	& =-\langle (\bm{\Pi}^o_{k}\bm q-\bm q)\cdot \bm n,\Pi_{k+1}^o\Phi - \Pi_k^{\partial}\Phi
	\rangle_{\partial\mathcal{T}_h}  -( \bm p (\Pi_{k+1}^ou - u), \nabla \Pi_{k+1}^o\Phi)_{\mathcal{T}_h}\\
	&\quad+ \langle  \bm p \cdot\bm n (\Pi_k^{\partial}u -u), \Pi_{k+1}^o\Phi- \Pi_k^{\partial}\Phi \rangle_{\partial\mathcal{T}_h} -\langle h_K^{-1}(\Pi_{k+1}^o u-u),\Pi_{k}^{\partial} \Pi_{k+1}^o\Phi- \Pi_k^{\partial}\Phi\rangle_{\partial\mathcal{T}_h}  \\
	&\quad  +\langle \bm p\cdot\bm n (\eta_h^u - \eta_h^{\widehat u} ), \Pi_{k+1}^o\Phi- \Pi_k^{\partial}\Phi \rangle_{\partial \mathcal T_h}.
	\end{align*}
	On the other hand, we have 
	\begin{align*}
	\hspace{1em}&\hspace{-1em} -M({\Pi}_{k+1}^o\Phi,\eta_h^u)_{\mathcal{T}_h}+
	\mathscr A(\bm{\Pi}_k^o\bm{\Psi},\Pi_{k+1}^o\Phi,\Pi_k^{\partial}\Phi;\eta_h^{\bm q},-\eta_h^u,-\eta_h^{\widehat u}) + \mathscr C^\star(\bm p,\bm p; \Pi_{k+1}^o\Phi,\Pi_k^{\partial}\Phi;-\eta_h^u)\\
	& = -\|\eta_h^u\|_{\mathcal T_h}^2- \langle(\bm{\Pi}_k^o\bm \Psi-\bm \Psi\cdot)\bm n, \eta_h^u,-\eta_h^{\widehat u}
	\rangle_{\partial\mathcal{T}_h} - \langle h_K^{-1}(\Pi_{k+1}^o\Phi-\Phi),\Pi_k^{\partial}\eta_h^u-\eta_h^{\widehat u}
	\rangle_{\partial\mathcal{T}_h}\\
	&\quad+( \bm p (\Pi_{k+1}^o\Phi-\Phi), \nabla \eta_h^u)_{\mathcal{T}_h}
	-\langle \bm p\cdot\bm n (\Pi_k^{\partial} \Phi-\Phi), \eta_h^u - \eta_h^{\widehat u} \rangle_{\partial\mathcal{T}_h} + (\nabla\cdot\bm p(\Pi_{k+1}^o\Phi-\Phi),\eta_h^u)_{\mathcal{T}_h}.
	\end{align*}
	{Comparing the above two equations, we} get 
	\begin{align*}
	\|\eta_h^u\|^2_{\mathcal{T}_h} &= -\langle \bm{\Pi}_k^o\bm q\cdot\bm n-\bm q\cdot\bm n,\Pi_{k+1}^o\Phi-\Pi_k^{\partial}\Phi \rangle_{\partial\mathcal{T}_h} -\langle
	h_K^{-1}(\Pi_{k+1}^ou-u),\Pi_k^{\partial}\Pi_{k+1}^o\Phi-\Pi_k^{\partial}\Phi
	\rangle_{\partial\mathcal{T}_h}\\
	&-(\bm p (\Pi_{k+1}^ou-u), \nabla \Pi_{k+1}^o\Phi)_{\mathcal{T}_h}
	+\langle \bm p\cdot\bm n (\Pi_k^{\partial} u-u), \Pi_{k+1}^o\Phi  - \Pi_{k}^\partial \Phi \rangle_{\partial\mathcal{T}_h}\\
	&+\langle \bm p\cdot\bm n (\eta_h^u-\eta_h^{\widehat u}), \Pi_{k+1}^o\Phi-\Pi_{k}^{\partial}\Phi \rangle_{\partial\mathcal{T}_h}\\
	&-\langle\bm{\Pi}_k^o\bm \Psi\cdot\bm n-\bm \Psi\cdot\bm n,\eta_h^{\widehat u}-\eta_h^u
	\rangle_{\partial\mathcal{T}_h}- \langle
	h_K^{-1}(\Pi_{k+1}^o\Phi-\Phi),\Pi_k^{\partial}\eta_h^u-\eta_h^{\widehat u}
	\rangle_{\partial\mathcal{T}_h}\\
	&+( \bm p (\Pi_{k+1}^o\Phi-\Phi), \nabla \eta_h^u)_{\mathcal{T}_h}
	-\langle \bm p\cdot\bm n (\Pi_k^{\partial} \Phi-\Phi), \eta_h^u -  \eta_h^{\widehat u} \rangle_{\partial\mathcal{T}_h}\\
	&+(\nabla\cdot\bm p (\Pi_{k+1}^o\Phi-\Phi),\eta_h^u)_{\mathcal{T}_h}\nonumber\\
	=&\sum_{i=1}^{10}S_i.
	\end{align*}
	We estimate $\{S_i\}_{i=1}^{10}$  {as follows (we omit some of the details):}
	\begin{align*}
	S_1 &=-	\langle \bm{\Pi}_k^o\bm q\cdot\bm n-\bm q\cdot\bm n,\Phi-\Pi_{k+1}^{o}\Phi
	\rangle_{\partial\mathcal{T}_h} \le Ch^{k+2}|\bm q|_{k+1}\|\Phi\|_{2},\\
	S_2&=	- \langle h_K^{-1}(\Pi_{k+1}^ou-u),\Pi_{k+1}^{o}\Phi-\Phi
	\rangle_{\partial\mathcal{T}_h}\le Ch^{k+2}  |u|_{k+2}\|\Phi\|_{2},\\
	S_3&=-(\bm p (\Pi_{k+1}^ou-u), \nabla \Pi_{k+1}^o\Phi)_{\mathcal{T}_h} \le Ch^{k+2}|u|_{k+2}|\Phi|_1,\\
	S_4&=\langle \bm p\cdot\bm n(\Pi_k^{\partial}u-u),\Pi_{k+1}^o\Phi-\Phi\rangle_{\partial\mathcal{T}_h}\le Ch^{k+2}|u|_{k+1}\|\Phi\|_{2},\\
	S_5&\le C\|h_K^{-1/2}(\eta_h^u-\eta_h^{\widehat u})\|_{\partial\mathcal{T}_h}h|\Phi|_1\le Ch^{k+2}|u|_{k+2}|\Phi|_1,\\
	S_6&\le Ch\|h_K^{-1/2}(\eta_h^u-\eta_h^{\widehat u})\|_{\partial\mathcal{T}_h}\|\bm{\Psi}\|_1
	\le Ch^{k+2 }\|\bm{\Psi}\|_{1},\\
	S_7&\le Ch\|h_K^{-1/2}(\eta_h^u-\eta_h^{\widehat u})\|_{\partial\mathcal{T}_h}\|\bm{\Phi}\|_{2}
	\le Ch^{k+2}|u|_{k+2}\|\bm{\Phi}\|_{2},\\
	S_8&\le Ch^{2}\|\Phi\|_{2}\|\nabla \eta_h^u\|_{\mathcal{T}_h}\le Ch^{k+2}\|\Phi\|_{2}|u|_{k+2},\\
	S_9&= - \langle \bm p\cdot\bm n(\Pi_k^{\partial}\Phi-\Phi),\eta_h^u-\eta_h^{\widehat u}\rangle_{\partial \mathcal T_h}\le Ch^{k+2}|\Phi|_1|u|_{k+2},\\
	S_{10}&\le Ch^{2}\|\Phi\|_{2}\|\eta_h^u\|_{\mathcal{T}_h}.
	\end{align*}
	{Summing the above estimates, we get }
	\begin{align*}
	\|\eta_h^u\|^2_{\mathcal{T}_h}\le Ch^{k+2}\|u\|_{k+2}	\|\eta_h^u\|_{\mathcal{T}_h}
	+Ch^2	\|\eta_h^u\|^2_{\mathcal{T}_h}
	\end{align*}
	Let $h$ be small enough, we have 
	\begin{align*}
	\|\eta_h^u\|_{\mathcal{T}_h}\le Ch^{k+2}\|u\|_{k+2}.	
	\end{align*}
	A simple application of the triangle inequality finishes the proof.
\end{proof}

\bibliographystyle{plain}
\bibliography{Model_Order_Reduction,Ensemble,HDG,Interpolatory,Mypapers,Added,Drift_Diffusion}

\end{document}